\title
[Algebraicity modulo p of generalized hypergeometric series $_nF_{n-1}$]
{Algebraicity modulo p of generalized hypergeometric series $_nF_{n-1}$}
\author{\firstname{Daniel} \middlename{} \lastname{vargas-Montoya}}
\urladdr{Daniel Vargas-Montoya, Institute of Mathematics of the Polish Academy of Sciences, \'Sniadeckich 8, 00-656 Warsaw, Poland}
\thanks{This work was supported by the National Science Centre of Poland (NCN), grant UMO-2020/39/B/ST1/00940.}
\email{\textit{Email address:} devargasmontoya@impan.pl}
\keywords{}
\subjclass{}
\begin{document}
\begin{abstract}
Let $f(z)={}_nF_{n-1}(\bm\alpha,\bm\beta)$ be the hypergeometric series with parameters $\bm\alpha=(\alpha_1,\ldots,\alpha_n)$ and $\bm\beta=(\beta_1,\ldots,\beta_{n-1},1)$ in $(\mathbb{Q}\cap(0,1])^n$, let $d_{\bm\alpha,\bm\beta}$ be the least common multiple of the denominators of $\alpha_1,\ldots,\alpha_n$, $\beta_1,\ldots,\beta_{n-1}$ written in lowest form and let $p$ be a prime number such that $p$ does not divide $d_{\bm\alpha,\bm\beta}$ and $f(z)\in\mathbb{Z}_{(p)}[[z]]$. Recently in \cite{vmsff}, it was shown that if for all $i,j\in\{1,\ldots,n\}$, $\alpha_i-\beta_j\notin\mathbb{Z}$ then the reduction of $f(z)$ modulo $p$ is algebraic over $\mathbb{F}_p(z)$. A standard way to measure the complexity of an algebraic power series is to estimate its degree and its height. In this work, we prove that if $p>2d_{\bm\alpha,\bm\beta}$ then there is a nonzero polynomial $P_p(Y)\in\mathbb{F}_p(z)[Y]$ having  degree at most  $p^{2^n\varphi(d_{\bm\alpha,\bm\beta})}$ and height at most $5^n(n+1)!p^{2^{n}\varphi({d_{\bm\alpha,\bm\beta})}}$ such that $P_p(f(z)\bmod p)=0$, where $\varphi$ is the Euler's totient function. Furthermore, our method of proof provides us a way to make an explicit construction of the polynomial $P_p(Y)$. We illustrate this construction by applying it to some explicit hypergeometric series.
\end{abstract}


\setcounter{tocdepth}{1}

\maketitle

\tableofcontents

\section{Introduction}

Let $\bm\alpha=(\alpha_1,\ldots,\alpha_n)$ and $\bm\beta=(\beta_1,\ldots,\beta_{n-1},1)$ be in $(\mathbb{Q}\setminus\mathbb{Z}_{\leq0})^n$. The generalized hypergeometric series with parameters $\bm\alpha$, $\bm\beta$ is the power series given by $$_nF_{n-1}(\bm\alpha,\bm\beta;z)=\sum_{i\geq0}\mathcal{Q}_{\bm\alpha,\bm\beta}(i)z^{i}\in\mathbb{Q}[[z]]\text{ with } \mathcal{Q}_{\bm\alpha,\bm\beta}(i)=\frac{(\alpha_1)_i\cdots(\alpha_n)_i}{(\beta_1)_i\cdots(\beta_{n-1})_ii!},$$
where for a real number $x$ and a nonnegative integer $i$, $(x)_i$ is the Pochhammer symbol, that is, $(x)_0=1$ and $(x)_i=x(x+1)\cdots(x+i-1)$ for $i>0$. We denote by $d_{\bm\alpha,\bm\beta}$ the least common multiple of the denominators of $\alpha_1,\ldots,\alpha_n$ and $\beta_1,\ldots, \beta_{n-1}$ written in lowest form. It is well-known that $_nF_{n-1}(\bm\alpha,\bm\beta;z)$ is a solution of the hypergeometric operator $$\mathcal{H}(\bm\alpha,\bm\beta)=\prod_{i=1}^{n}(\delta+\beta_i-1)-z\prod_{i=1}^n(\delta+\alpha_i),\text{ with }\delta=z\frac{d}{dz}.$$

We recall that for any field $K$, the power series  $h(z)\in K[[z]]$ is an algebraic power series over $K(z)$ if there exists a nonzero polynomial $P(Y)\in K(z)[Y]$ such that $P(h(z))=0$. Given a prime number $p$, we denote by $\mathbb{Z}_{(p)}$ the localization of $\mathbb{Z}$ at ideal $(p)$. That is, $\mathbb{Z}_{(p)}$ is the set of rational numbers $a/b$ written in lowest form such that $p$ does not divide $b$. This ring is a local ring whose maximal ideal is $(p)\mathbb{Z}_{(p)}$ and its residue field is  the field with $p$ elements, which is denoted by $\mathbb{F}_p$. Given a power series $f(z)=\sum_{i\geq0}a(i)\in\mathbb{Z}_{(p)}[[z]]$, the reduction of $f$ modulo $p$ is $f(z)\bmod p:= \sum_{i\geq0}(a(i)\bmod p)z^{i}\in\mathbb{F}_p[[z]]$. The power series $f(z)$ is said to be \emph{algebraic modulo $p$} if $f(z)\bmod p$ is an algebraic power series over $\mathbb{F}_p(z)$. A usual way to measure the complexity of an algebraic power series is to estimate its \emph{degree} and its \emph{height}.
\begin{defi}
Let $K$ be a field and let $a(z)=s(z)/t(z)$ be in $K(z)$ written in lowest form. The height of $a(z)$ is equal to $\max\{deg(s(z)), deg(t(z))\}$. Let $P(Y)=\sum_{i=0}^{m}a_i(z)Y^i$ be in $K(z)[Y]$ such that $a_m(z)$ is not zero. The degree of $P$ is $m$ and the height of $P$ is the maximum of the heights of $a_0(z),\ldots, a_{m}(z)$.
\end{defi}

We have shown in \cite[Theorem 1.2]{vmsff} the following result. Let $\mathcal{S}$ be an infinite set of prime numbers $p$ such that $p$ does not divide $d_{\bm\alpha,\bm\beta}$ and $_nF_{n-1}(\bm\alpha,\bm\beta;z)\in\mathbb{Z}_{(p)}[[z]]$. We proved that if, for all $i,j\in\{1,\ldots, n\}$, $\alpha_i-\beta_j\notin\mathbb{Z}$ then, for all $p\in\mathcal{S}$, $_nF_{n-1}(\bm\alpha,\bm\beta;z)$ is algebraic modulo $p$. We also established that $_nF_{n-1}(\bm\alpha,\bm\beta;z)\bmod p$  has degree at most $p^{n^2\varphi(d_{\bm\alpha,\bm\beta})}$, where $\varphi$ is the Euler's totient function. However, the result obtained in \cite{vmsff} does not offer any information about the height. The main result of this work shows that if  $\bm\alpha$ and $\bm\beta$ belong to  $(\mathbb{Q}\cap(0,1])^n$ then,  for all $p\in\mathcal{S}$ satisfying $p>2d_{\bm\alpha,\bm\beta}$, there is a nonzero polynomial $P_p(Y)\in\mathbb{F}_p(z)[Y]$ having  degree at most  $p^{2^t l}$ and height at most $5^t(t+1)!p^{2^{t}l}$ such that $P_p(_nF_{n-1}(\bm\alpha,\bm\beta;z)\bmod p)=0$, where $t\leq n$ and $l$ is the order of $p$ in $(\mathbb{Z}/d_{\bm\alpha,\bm\beta}\mathbb{Z})^*$.  Further, the advantage of the present method is that it gives an explicit way to construct the polynomial $P_p(Y)$.

\subsection{Main result} In order to state our main result, Theorem~\ref{theo_main_infinite_p}, we have to introduce some notations. Let $p$ be a prime number such that $p$ does not divide $d_{\bm\alpha,\bm\beta}$. Then $\mathcal{H}(\bm\alpha,\bm\beta)\in\mathbb{Z}_{(p)}[z][\delta]$. In particular, we can reduce $\mathcal{H}(\bm\alpha,\bm\beta)$ modulo $p$ and we denote by  $\mathcal{H}(\bm\alpha,\bm\beta,p)$ its reduction modulo $p$. That is, $$\mathcal{H}(\bm\alpha,\bm\beta,p):=\prod_{i=1}^{n}(\delta+(\beta_i-1)\bmod p)-z\prod_{i=1}^n(\delta+\alpha_i\bmod p)\in\mathbb{F}_p[z][\delta].$$  An element of the set  $\{0,1-\beta_1\bmod p,\ldots, 1-\beta_{n-1}\bmod p\}$ will be called an exponent at zero of $\mathcal{H}(\bm\alpha,\bm\beta,p)$. Consider the following set: $$E_{\bm\alpha,\bm\beta,p}=\{r\in\{0,1,\ldots,p-1\}: r\bmod p \text{ is an exponent at zero of }\mathcal{H}(\bm\alpha,\bm\beta,p)\}.$$
Given a finite set $E$, by $\#E$ we mean the number of elements of $E$.  It is clear that $0\in E_{\bm\alpha,\bm\beta,p}$ and that $\#E_{\bm\alpha,\bm\beta,p}\leq n$. As usual, $v_p:\mathbb{Q}\rightarrow\mathbb{Z}$ denotes the $p$-adic valuation map. We define the following set:
$$S_{\bm\alpha,\bm\beta,p}=\{r\in\{0,1,\ldots,p-1\}: r\in E_{\bm\alpha,\bm\beta,p} \text{ and }v_p\left(\mathcal{Q}_{\bm\alpha,\bm\beta}(r)\right)=0\}.$$
 The set $S_{\bm\alpha,\bm\beta,p}$ is not empty because $0\in S_{\bm\alpha,\bm\beta,p}$, and $\#S_{\bm\alpha,\bm\beta,p}\leq n$ since $S_{\bm\alpha,\bm\beta,p}\subset E_{\bm\alpha,\bm\beta,p}$.

Let us recall the definition of the map $\mathfrak{D}_p:\mathbb{Z}_{(p)}\rightarrow\mathbb{Z}_{(p)}$ introduced by Dwork in \cite[Chap.~8]{Dworklectures}. The map $\mathfrak{D}_p:\mathbb{Z}_{(p)}\rightarrow\mathbb{Z}_{(p)}$ is such that, for  every $\gamma$ in $\mathbb{Z}_{(p)}$, $\mathfrak{D}_p(\gamma)$ is the unique element in $\mathbb{Z}_{(p)}$ such that $p\mathfrak{D}_p(\gamma)-\gamma$ belongs to $\{0,\ldots, p-1\}$. In \cite[Chap. 8]{Dworklectures} this map is denoted by $\gamma\mapsto\gamma'$.
 For $\bm\gamma=(\gamma_1,\ldots,\gamma_n)\in\mathbb{Z}_{(p)}^n$ we write $\mathfrak{D}_p(\bm\gamma)$ for $(\mathfrak{D}_p(\gamma_1),\ldots,\mathfrak{D}_p(\gamma_n))$. For all integers $m\geq1$, $\mathfrak{D}_p^m$ is the $m$-th composition of $\mathfrak{D}_p$ with itself and  $\mathfrak{D}_p^0$ is identity map on $\mathbb{Z}_{(p)}$.
 
 \begin{rema}
Let $\bm\alpha=(\alpha_1,\ldots,\alpha_n)$ and $\bm\beta=(\beta_1,\ldots,\beta_{n-1},1)$ be in $\mathbb{Q}^n$ and let $p$ a prime number such that $p$ does not divide $d_{\bm\alpha,\bm\beta}$. Then, $\bm\alpha$ and $\bm\beta$ belong to $\mathbb{Z}_{(p)}^n$ and for this reason, for all integers $m\geq0$, the differential operator  $\mathcal{H}(\mathfrak{D}_p^m(\bm\alpha),\mathfrak{D}_p^m(\bm\beta))$ belongs to $\mathbb{Z}_{(p)}[z][\delta]$. Thus, for all integers $m\geq0$, the sets $E_{\mathfrak{D}_p^m(\bm\alpha),\mathfrak{D}_p^m(\bm\beta),p}$, $S_{\mathfrak{D}_p^m(\bm\alpha),\mathfrak{D}_p^m(\bm\beta),p}$ are well-defined.
\end{rema}

We are now ready to state our main result:

\begin{theo}\label{theo_main_infinite_p}
Let $\bm\alpha=(\alpha_1,\ldots,\alpha_n)$ and $\bm\beta=(\beta_1,\ldots,\beta_{n-1},1)$ be in $(\mathbb{Q}\cap(0,1])^n$,  let $f(z)$ be the hypergeometric series $_nF_{n-1}(\bm\alpha,\bm\beta;z)$, let $p$ be  prime number such that $p>2d_{\bm\alpha,\bm\beta}$ and $f(z)\in\mathbb{Z}_{(p)}[[z]]$, and let $(\mathbb{Z}/d_{\bm\alpha,\bm\beta}\mathbb{Z})^*$ be the unit group of $\mathbb{Z}/d_{\bm\alpha,\bm\beta}\mathbb{Z}$.  Suppose that, for all $i,j\in\{1,\ldots, n\}$, $\alpha_i-\beta_j\notin\mathbb{Z}$. Then there is a nonzero polynomial $P_p(Y)\in\mathbb{F}_p(z)[Y]$ having  degree at most  $p^{2^n\varphi(d_{\bm\alpha,\bm\beta})}$ and height at most $5^n(n+1)!p^{2^{n}\varphi({d_{\bm\alpha,\bm\beta})}}$ such that $P_p(f(z)\bmod p)=0.$
Moreover, if $l$ is the order of $p$ in $(\mathbb{Z}/d_{\bm\alpha,\bm\beta}\mathbb{Z})^*$ then the following assertions hold:
 \begin{enumerate}[label=(\arabic*)]
\item if $1=\#S_{\mathfrak{D}_p^{l-1}(\bm\alpha),\mathfrak{D}_p^{l-1}(\bm\beta),p}$ then $$P_p(Y)=Y-Q_1(z)Y^{p^l},$$
where $Q_1(z)$ belongs to $\mathbb{F}_{p}[z]$ and has degree less than $p^l$;
 
\smallskip

\item if $2=\#S_{\mathfrak{D}_p^{l-1}(\bm\alpha),\mathfrak{D}_p^{l-1}(\bm\beta),p}$ then $$P_p(Y)=Y-Q_1(z)Y^{p^l}-Q_{2}(z)Y^{p^{2l}},$$
and the height of $P_p(Y)$ is less than $2p^{2l}$;

\smallskip 

\item if $2<\#S_{\mathfrak{D}_p^{l-1}(\bm\alpha),\mathfrak{D}_p^{l-1}(\bm\beta),p}=t+1$ then $$P_p(Y)=Y-Q_1(z)Y^{p^l}-Q_2(z)Y^{p^{2l}}-Q_{3}(z)Y^{p^{3l}}-\cdots-Q_{2^t}(z)Y^{p^{2^{t}l}},$$
and the height of $P_p(Y)$ is less than  $5^t(t+1)!p^{2^{t}l}$.

\end{enumerate}
\end{theo}
Let us make a few comments. In these comments we keep the notations used in the statement of Theorem~\ref{theo_main_infinite_p}.

\textbullet\quad To prove Theorem~\ref{theo_main_infinite_p} it is sufficient to show that the assertions (1), (2) and (3) hold because $l\leq\varphi(d_{\bm\alpha,\bm\beta})=\#(\mathbb{Z}/d_{\bm\alpha,\bm\beta}\mathbb{Z})^*$ and $\#S_{\mathfrak{D}_p^{l-1}(\bm\alpha),\mathfrak{D}_p^{l-1}(\bm\beta),p}\leq n$. 

\textbullet\quad The method of proof of Theorem~\ref{theo_main_infinite_p} provides us a way to make an explicit construction of the polynomial $P_p(Y)\in\mathbb{F}_p(z)[Y]$. In Section \ref{sec_construction}, we show how to construct the polynomial $P_p(Y)$ and we illustrate this construction by applying it to some hypergeometric series.

\textbullet\quad The conclusion of the assertion (1) of Theorem~\ref{theo_main_infinite_p} is to equivalent to saying that the hypergeometric series  $_{n}F_{n-1}(\bm\alpha,\bm\beta,z)$ satisfies the \emph{$p^l$-Lucas property}. We say that a power series $f(z)=\sum_{i\geq0}a(i)z^i\in\mathbb{Q}[[z]]$ satisfies the $p^l$-Lucas property if $f(z)\in\mathbb{Z}_{(p)}[[z]]$, $a(0)=1$ and, for all integers $m\geq0$ and for all $r\in\{0,\ldots,p^l-1\}$, $a(mp^l+r)\equiv a(m)a(r)\bmod p.$ From \cite[Proposition~4.8]{ABD}, it follows that $f(z)\in 1+z\mathbb{Z}_{(p)}[[z]]$ satisfies the $p^l$-Lucas property if and only if $f\equiv A_p(z)f^{p^l}\bmod p$, where $A_p$ is a polynomial with coefficients in $\mathbb{Z}_{(p)}$ having degree less than $p^l$. 

\textbullet\quad As we have already said, from Theorem 1.2 of \cite{vmsff} it follows that ${}_nF_{n-1}(\bm\alpha,\bm\beta;z)$ is algebraic modulo $p$ and the degree of its reduction modulo $p$ is at most $p^{n^2\varphi(d_{\bm\alpha,\bm\beta})}$. The proof of this result relies on the fact that  $\mathcal{H}(\bm\alpha,\bm\beta)$ has a strong Frobenius structure for all $p\in\mathcal{S}$ with period $\varphi(d_{\bm\alpha,\bm\beta})$. Nevertheless, the approach used in this work to prove Theorem~\ref{theo_main_infinite_p} does not use the existence of strong Frobenius structure.\footnote{The existence of strong Frobenius structure of  $\mathcal{H}(\bm\alpha,\bm\beta)$ is a directly consequence of a result due to Crew  \cite{RC17}. The approach used by Crew is via $p$-adic cohomology. Nevertheless, we also obtain this result in \cite[Theorem 6.2]{vmsff} by using an elementary approach based on ideas of Christol \cite{Gillesfacteurs} and Saliner \cite{S87}.}  

\textbullet\quad  In Section~\ref{sec_exam} we will compare  through some hypergeometric series the estimate $p^{2^tl}$ given by Theorem~\ref{theo_main_infinite_p} and the estimate $p^{n^2\varphi(d_{\bm\alpha,\bm\beta})}$ given by Theorem 1.2 of \cite{vmsff}. As we will see, for these particular examples, the estimate $p^{2^tl}$ is much finer than the estimate $p^{n^2\varphi(d_{\bm\alpha,\bm\beta})}$.

\subsection{Structure of proof}

The proof of Theorem~\ref{theo_main_infinite_p} is based on Theorem~\ref{theo_main}. The latter one is derived from Propositions~\ref{prop_third_step} and \ref{lemm_system}. In section~\ref{sec_proof_lemm_system}, Proposition~\ref{lemm_system} is proved. Proposition~\ref{prop_third_step} will be proved in Section~\ref{sec_proof_prop_third_step} and its proof relies on Lemmas~\ref{lemm_aux} and \ref{lemm_second_step}. The proof of Lemma~\ref{lemm_aux} is given in Section~\ref{sec_proof_lemm_aux}. Finally, in Section~\ref{sec_proof_lemm_second_step} we prove Lemma~\ref{lemm_second_step}. Nevertheless, the proof of this lemma depends on Lemma~\ref{lemm_first_step}, which is also proved in Section~\ref{sec_proof_lemm_second_step}. Lemma~\ref{lemm_first_step} is, in fact, the main ingredient of this work and its proof is based essentially on two facts. The first one deals with some $p$-adic properties of the sequence $\{\mathcal{Q}_{\bm\alpha,\bm\beta}(j)\}_{j\geq0}$. Sections~\ref{sec_aux_1} and \ref{sec_aux_2} are devoted to studying these $p$-adic properties. The second fact is the equality 
\begin{equation}\label{eq_fundamental}
I(j)\mathcal{Q}_{\bm\alpha,\bm\beta}(j)=\mathcal{Q}_{\bm\alpha,\bm\beta}(j-1)T(j-1)
\end{equation}
 for all integers $j\geq1$, where $I(j)=\prod_{i=1}^n(j+\beta_i-1)$ and $T(j)=\prod_{i=1}^n(j+\alpha_i)$. The Equality~\eqref{eq_fundamental} is equivalent to the fact that ${}_nF_{n-1}(\bm\alpha,\bm\beta;z)$ is solution of $\mathcal{H}(\bm\alpha,\bm\beta)$.

\subsection{ Reduction modulo $p$ of generalized hypergeometric series }\label{subsec_diag_lucas} 

In oder to apply Theorem~\ref{theo_main_infinite_p}, a natural question is to determine when it is possible to reduce a hypergeometric series modulo $p$. In this direction,  an interesting class of hypergeometric series is the class of \emph{globally bounded} hypergeometric series. We say that the  hypergeometric series $_nF_{n-1}(\bm\alpha,\bm\beta;z)$ is globally bounded if there is $c\in\mathbb{Q}\setminus\{0\}$ such that $_nF_{n-1}(\bm\alpha,\bm\beta;cz)$ belongs to $\mathbb{Z}[[z]]$. Consequently, a globally bounded hypergeometric series can be reduced modulo $p$ for almost every prime number $p$. As an example, the hypergeometric series $\mathfrak{g}(z):={}_3F_2(\bm\alpha,\bm\beta;z)$ with parameters $\bm\alpha=(\frac{1}{9},\frac{4}{9},\frac{5}{9})$ and $\bm\beta=(\frac{1}{3},1,1)$ is globally bounded because $\mathfrak{g}(27^2z)\in\mathbb{Z}[[z]]$. In \cite{C86}, Christol has given a characterization of the hypergeometric series that are globally bounded. For more exemples of globally bounded hypergeometric series we refer the reader to \cite{AKM20, BY22}.

In addition, there are also many generalized hypergeometric series that are not globally bounded  but,  for infinitely many prime numbers $p$, they can be reduced modulo $p$. For example, $\mathfrak{f}(z)={}_2F_1(\bm\alpha,\bm\beta;z)$ with $\bm\alpha=(\frac{1}{3},\frac{1}{2})$ and $\bm\beta=(\frac{5}{12},1)$ is not globally bounded but thanks to Proposition 24 of \cite{DRR}, for all primes $p\equiv1\bmod 12$, $\mathfrak{f}(z)\in\mathbb{Z}_{(p)}[[z]]$. 

\section{Examples}\label{sec_exam}
The aim of this section is to illustrate Theorem~\ref{theo_main_infinite_p} by applying it to the hypergeometric series $\mathfrak{f}(z)$ and $\mathfrak{g}(z)$. In order to proceed, we need some results which  will also be useful in the rest of the paper. 
\begin{lemm}\label{cyclic}
Let $\gamma=\frac{a}{b}$ be in $\mathbb{Q}\cap(0,1]$ written in lowest form and let $p$ be a prime number such $v_p(\gamma)=0$. If $p^l\equiv1\bmod b$ then $\mathfrak{D}_p^l(\gamma)=\gamma$.
\end{lemm}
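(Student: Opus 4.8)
The plan is to unwind the definition of the Dwork map $\mathfrak{D}_p$ and to interpret the iterates $\mathfrak{D}_p^m(\gamma)$ as a base-$p$ shift on the $p$-adic expansion of a suitable rational number, then use the hypothesis $p^l\equiv 1\bmod b$ to close the orbit. First I would record the elementary fact that, for $\gamma\in\mathbb{Z}_{(p)}$, the element $\mathfrak{D}_p(\gamma)$ is characterized by $p\,\mathfrak{D}_p(\gamma)=\gamma+d$ with $d\in\{0,1,\dots,p-1\}$, so that $\mathfrak{D}_p(\gamma)=(\gamma+d)/p$ where $d\equiv -\gamma\bmod p$ (the congruence makes sense in $\mathbb{F}_p$ since $v_p(\gamma)=0$ is not needed here, only $\gamma\in\mathbb{Z}_{(p)}$). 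A key preliminary observation is that if $\gamma=a/b$ with $0<\gamma\le 1$ and $v_p(\gamma)=0$ (so $p\nmid ab$), then $\mathfrak{D}_p(\gamma)$ again lies in $\mathbb{Q}\cap(0,1]$ and has the same denominator $b$ in lowest form: indeed $\mathfrak{D}_p(\gamma)=(a+db)/(pb)$, and since $a+db\equiv 0\bmod p$ one can cancel the $p$, leaving denominator $b$; the bound $0<\mathfrak{D}_p(\gamma)\le 1$ follows because $0<\gamma\le 1$ forces $(\gamma+d)/p\le (1+p-1)/p=1$ and $>0$. Hence the map $\mathfrak{D}_p$ restricts to a self-map of the finite set $\{a/b:\gcd(a,b)=1,\ 0<a\le b\}$, which is in bijection (via $a\mapsto a\bmod b$, using $b/b\mapsto 0$) with a subset of $\mathbb{Z}/b\mathbb{Z}$.

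Next I would make the dynamics explicit on $\mathbb{Z}/b\mathbb{Z}$. Writing $\gamma=a/b$ and $\mathfrak{D}_p(\gamma)=a_1/b$ with $a_1\equiv p^{-1}a\bmod b$: from $p\cdot(a_1/b)=a/b+d$ we get $pa_1=a+db$, so $pa_1\equiv a\bmod b$, i.e. $a_1\equiv p^{-1}a\bmod b$ (note $p$ is invertible mod $b$ because $p\nmid b$, and $p^l\equiv 1\bmod b$ in particular forces $\gcd(p,b)=1$). Iterating, $\mathfrak{D}_p^m(\gamma)=a_m/b$ in lowest form with $a_m\equiv p^{-m}a\bmod b$ and $a_m\in\{1,\dots,b\}$ chosen as the unique representative with $\gamma\in(0,1]$ — more precisely $a_m$ is the representative in $\{1,\ldots,b\}$ of $p^{-m}a$ modulo $b$ when $b\nmid p^{-m}a$, and $a_m=b$ (giving $\mathfrak{D}_p^m(\gamma)=1$) when $b\mid p^{-m}a$. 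Since $p^{l}\equiv 1\bmod b$, we have $p^{-l}\equiv 1\bmod b$, hence $a_l\equiv p^{-l}a\equiv a\bmod b$; as both $a_l$ and $a$ lie in $\{1,\dots,b\}$ they are equal, and therefore $\mathfrak{D}_p^l(\gamma)=a_l/b=a/b=\gamma$.

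The only genuinely delicate point — which I would treat carefully rather than skip — is the bookkeeping of representatives: one must check that the element of $\{1,\ldots,b\}$ attached to $\mathfrak{D}_p^m(\gamma)$ really is the residue of $p^{-m}a$ modulo $b$ (with the convention $b$ for the zero residue), i.e. that reducing modulo $b$ commutes with the normalization $0<\gamma\le 1$. This is immediate once one knows, as shown above, that $\mathfrak{D}_p$ preserves the interval $(0,1]$ and the denominator $b$, so that at every step we are looking at the genuinely reduced fraction; no accidental cancellation with $b$ can occur because $p\nmid b$. With that in hand the congruence $p^l\equiv 1\bmod b$ does all the work, and the lemma follows. (One can alternatively phrase the whole argument via the $p$-adic expansion: $\gamma=a/b$ with $v_p(\gamma)=0$ has an eventually periodic base-$p$ expansion of period dividing $\operatorname{ord}_b(p)\mid l$, the map $\mathfrak{D}_p$ is the shift on this expansion, and periodicity gives $\mathfrak{D}_p^l(\gamma)=\gamma$; I would likely present the direct computational version above as it is shorter and self-contained.)
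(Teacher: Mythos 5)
Your proof is correct, but it follows a different route from the paper's. You reduce everything to the arithmetic of the numerator modulo $b$: you first check that $\mathfrak{D}_p$ maps a reduced fraction $a/b\in(0,1]$ to a reduced fraction $y/b\in(0,1]$ with $py\equiv a\bmod b$ and $y\in\{1,\dots,b\}$, so that iterating $\mathfrak{D}_p$ is just multiplication by $p^{-1}$ on residues mod $b$, and the hypothesis $p^l\equiv1\bmod b$ closes the orbit since $\{1,\dots,b\}$ is a complete residue system. This is in fact exactly the content of the paper's Proposition~\ref{prop_d_p}(1), which the author proves separately and later; you are effectively deriving the lemma as a corollary of that proposition. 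The paper instead argues $p$-adically: it writes $\gamma=\sum_{s\ge0}j_sp^s$, proves by induction the digit-shift formula $\mathfrak{D}_p^n(\gamma)=1+\sum_{s\ge n}j_sp^{s-n}$ (the alternative you sketch in your closing parenthesis), and then shows both $p^l\mathfrak{D}_p^l(\gamma)-\gamma$ and $p^l\gamma-\gamma$ lie in $\{1,\dots,p^l-1\}$, so that $p^l(\mathfrak{D}_p^l(\gamma)-\gamma)$ is a rational number of absolute value less than $p^l$ with denominator prime to $p$, forcing it to be zero. Your version avoids the paper's size estimate and its "without loss of generality" case split, and it makes the denominator-preservation and interval-preservation explicit (which the paper only establishes in Proposition~\ref{prop_d_p}); the paper's version yields the explicit digit-shift description of the iterates as a byproduct. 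Both arguments are complete; the one delicate point in yours — that the representative in $\{1,\dots,b\}$ attached to $\mathfrak{D}_p^m(\gamma)$ really is the reduced numerator at every step, with no stray cancellation — is handled correctly by your observation that $\gcd(a+db,b)=\gcd(a,b)=1$ and $p\nmid b$.
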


\begin{proof}
Since $\gamma\in\mathbb{Z}_{(p)}$, we have $$\gamma=\sum_{s\geq0}j_sp^s,$$
where, for all $s$, $j_s\in\{0,\ldots, p-1\}$.  Note that $j_0\neq0$ because $v_p(\gamma)=0$. First, we are going to show by induction on $n\in\mathbb{N}_{>0}$ that $$\mathfrak{D}_p^n(\gamma)=1+\sum_{s\geq n}j_sp^{s-n}.$$
It is clear that, $p\left(1+\sum_{s\geq1}j_sp^{s-1}\right)-\gamma=p-j_0.$ Then, $\mathfrak{D}_p(\gamma)=1+\sum_{s\geq1}j_sp^{s-1}$ because $p-j_0\in\{1,\ldots, p-1\}$. Now, suppose that $\mathfrak{D}_p^n(\gamma)=1+\sum_{s\geq n}j_sp^{s-n}.$ It is clear that $$p\left(1+\sum_{s\geq n+1}j_sp^{s-n-1}\right)-\left(1+\sum_{s\geq n}j_sp^{s-n}\right)=p-j_n-1.$$
As $p-j_n-1\in\{0,\ldots, p-1\}$ and, by induction hypothesis, $\mathfrak{D}_p^n(\gamma)=1+\sum_{s\geq n}j_sp^{s-n}$ then $\mathfrak{D}_p^{n+1}(\gamma)=1+\sum_{s\geq n+1}j_sp^{s-n-1}$.

Thus, for all integers $n\geq1$ ,$$\mathfrak{D}_p^n(\gamma)=1+\sum_{s\geq n}j_sp^{s-n}.$$
In particular for the integer $l$, we have
 \begin{equation*}
 \begin{split}
 p^l\mathfrak{D}_p^l(\gamma)-\gamma&=p^l\left(1+\sum_{s\geq l}j_sp^{s-l}\right)-\sum_{s\geq0}j_sp^s\\
 &=p^l-\sum_{s=0}^{l-1}j_sp^s.
 \end{split}
 \end{equation*}
As $j_0\in\{1,\ldots, p-1\}$ and for all $s\geq1$, $j_s\in\{0,\ldots,p-1\}$, then $p^l-\sum_{s=0}^{l-1}j_sp^s\in\{1,\ldots, p^l-1\}$. Hence, $p^l\mathfrak{D}_p^l(\gamma)-\gamma\in\{1,\ldots, p^l-1\}$. 

We now prove that $p^l\gamma-\gamma\in\{1,\ldots, p^l-1\}$. Indeed, as $p^l\equiv1\bmod b$ then $p^l=1+bk$. So, $p^l\gamma=\gamma+ak$. We also have $a\leq b$ because by assumption $\gamma\in(0,1]$. Thus, $ak\leq bk$ and as $bk=p^l-1$ then $ak\leq p^l-1$. Therefore, $p^l\gamma-\gamma\in\{1,\ldots, p^l-1\}$. 

So that $p^l\mathfrak{D}_p^l(\gamma)-\gamma$ and $p^l\gamma-\gamma$ belong to $\{1,\ldots, p^l-1\}$. Without losing any generality we can assume that $p^l\mathfrak{D}_p^l(\gamma)-\gamma\geq p^l\gamma-\gamma.$ Then, $p^l\mathfrak{D}_p^l(\gamma)-p^l\gamma=p^l(\mathfrak{D}_p^l(\gamma)-\gamma)$ belongs to $\{0,1\ldots, p^l-1\}$. We write $\mathfrak{D}_p^l(\gamma)-\gamma=\frac{c}{d}\in\mathbb{Z}_{(p)}$ where $c$ and $d$ are positive co-prime integers. Hence, $p^lc=dt$ with $t\in\{0,1\ldots, p^l-1\}$. We assume for contradiction that $\frac{c}{d}\neq0$. As $p$ does not divide $d$, then $p^l$ divides $t$. This is a clear contradiction of the fact that $t$ belongs to $\{0,1\ldots, p^l-1\}$. Consequently, $\frac{c}{d}=0$, that is, $\mathfrak{D}_p^l(\gamma)-\gamma=0$.
 
\end{proof}

\begin{lemm}\label{lemm_val_p}
Let $p$ be a prime number and $\gamma$ be in $\mathbb{Z}_{(p)}$. We put $s:=p\mathfrak{D}_p(\gamma)-\gamma$. If  $v_p(\mathfrak{D}_{p}(\gamma))=0$ then, for every $r\in\{0,1,\ldots, p-1\}$, we have $$v_p((\gamma)_r)= \left\{ \begin{array}{lcc}
             0 &   if  & r\leq s \\
             \\ 1 &  if & r>s.  \\
             \end{array}
   \right.$$
\end{lemm}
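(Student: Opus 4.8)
The plan is to reduce the statement to a single elementary observation about which factors of the Pochhammer symbol $(\gamma)_r=\prod_{j=0}^{r-1}(\gamma+j)$ are divisible by $p$. By the definition of Dwork's map, $s=p\mathfrak{D}_p(\gamma)-\gamma$ lies in $\{0,1,\ldots,p-1\}$, so for each $j$ we may rewrite $\gamma+j=p\mathfrak{D}_p(\gamma)+(j-s)$. Since $j$ runs over $\{0,\ldots,r-1\}$ with $r\leq p-1$ and $s\in\{0,\ldots,p-1\}$, the integer $j-s$ satisfies $|j-s|\leq p-1$, and therefore $p\mid(j-s)$ if and only if $j=s$. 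This is the crux: among the $r$ factors of $(\gamma)_r$, at most one — namely $\gamma+s$ — can be divisible by $p$, and it is divisible by $p$ precisely when the index $j=s$ actually occurs, i.e. when $s\leq r-1$, that is, when $r>s$.

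First I would dispose of the trivial case $r=0$, where $(\gamma)_0=1$ has valuation $0$ and $r\leq s$ holds automatically. For $r\geq 1$ I would use $v_p((\gamma)_r)=\sum_{j=0}^{r-1}v_p(\gamma+j)$ and evaluate each summand by the ultrametric inequality. Since $\mathfrak{D}_p(\gamma)\in\mathbb{Z}_{(p)}$ we have $v_p(p\mathfrak{D}_p(\gamma))\geq 1$, so for $j\neq s$ the term $j-s$ is a nonzero integer not divisible by $p$, whence $v_p(j-s)=0<v_p(p\mathfrak{D}_p(\gamma))$ and therefore $v_p(\gamma+j)=0$. For $j=s$ one gets exactly $\gamma+s=p\mathfrak{D}_p(\gamma)$, so $v_p(\gamma+s)=1+v_p(\mathfrak{D}_p(\gamma))=1$, this last equality being the one and only place where the hypothesis $v_p(\mathfrak{D}_p(\gamma))=0$ is used.

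Combining the two cases finishes the argument: if $r\leq s$ then $s\notin\{0,\ldots,r-1\}$, every factor $\gamma+j$ has valuation $0$, and $v_p((\gamma)_r)=0$; if $r>s$ then $s\in\{0,\ldots,r-1\}$ contributes a single factor of valuation $1$ and all others contribute $0$, so $v_p((\gamma)_r)=1$. I do not expect any genuine obstacle here; the only subtle point worth stating carefully is that the unique divisible factor $\gamma+s$ has valuation exactly $1$ and not $\geq 2$, which is exactly what the assumption $v_p(\mathfrak{D}_p(\gamma))=0$ guarantees, while everything else is forced by the defining property of $\mathfrak{D}_p$ together with the ultrametric inequality.
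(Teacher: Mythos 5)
Your proof is correct and follows essentially the same route as the paper: both arguments identify $\gamma+s=p\mathfrak{D}_p(\gamma)$ as the unique factor of $(\gamma)_r$ divisible by $p$ among $\gamma,\gamma+1,\ldots,\gamma+p-1$, and use the hypothesis $v_p(\mathfrak{D}_p(\gamma))=0$ only to conclude that this factor has valuation exactly $1$. Your write-up merely makes explicit (via the rewriting $\gamma+j=p\mathfrak{D}_p(\gamma)+(j-s)$ and the ultrametric inequality) the uniqueness claim that the paper states in one line.
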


\begin{proof}
By definition of the map $\mathfrak{D}_p$, $s$ is the unique integer in $\{0,\ldots, p-1\}$ such that $\gamma+s\in p\mathbb{Z}_{(p)}$. For this reason, $v_p((\gamma)\cdots(\gamma+s-1)(\gamma+s+1)\cdots(\gamma+p-1))=0$. So, if $r\leq s$, $v_p((\gamma)_r)=0$ and if $r>s$, $v_p((\gamma)_r)=v_p(p\mathfrak{D}_p(\gamma))=1+v_p(\mathfrak{D}_p(\gamma))=1$ because by assumption, $v_p(\mathfrak{D}_{p}(\gamma))=0$.
\end{proof}

\begin{exem}\label{exam_1}
Consider the hypergeometric series $\mathfrak{f}(z):={}_2F_1(\bm\alpha,\bm\beta;z)$, with $\bm\alpha=(\frac{1}{3},\frac{1}{2})$ and $\bm\beta=(\frac{5}{12},1)$. In this case $d_{\bm\alpha,\bm\beta}=12$. Let $\mathcal{S}$ be the set of prime numbers $p$ such that $p>24$ and $p\equiv1\bmod 12$. So, by applying Proposition 24 of \cite{DRR}, we conclude that, for every $p\in\mathcal{S}$, $\mathfrak{f}(z)\in\mathbb{Z}_{(p)}[[z]].$  From Theorem~1.2 of \cite{vmsff}, we get that, for every $p\in\mathcal{S}$, $\mathfrak{f}(z)\bmod p$ has degree at most $p^{16}$. Actually, we will see that, by applying Theorem~\ref{theo_main_infinite_p}, $\mathfrak{f}(z)\bmod p$ has degree at most $p^{2}$ for all $p\in\mathcal{S}$. Let $p$ be in $\mathcal{S}$. Then, $p=1+12k$ with $k>1$. We first prove that $S_{\bm\alpha,\bm\beta,p}=\{0,1+5k\}$. It is nor hard to see that $E_{\bm\alpha,\bm\beta,p}=\{0,1+5k\}$ and it is clear that $0\in S_{\bm\alpha,\bm\beta,p}$. As $p\equiv1\bmod 12$ and $v_p(\bm\alpha)=(0,0)=v_p(\bm\beta)$ then, from Lemma~\ref{cyclic}, we obtain $\mathfrak{D}_p(\bm\alpha)=\bm\alpha$ and $\mathfrak{D}_p(\bm\beta)=\bm\beta$. Thus, we obtain the following equalities: $$4k=p\mathfrak{D}_p(1/3)-1/3,\quad 6k=p\mathfrak{D}_p(1/2)-1/2,\text{ and } 5k=p\mathfrak{D}_p(5/12)-5/12.$$ So, from Lemma~\ref{lemm_val_p}, we obtain $$v_p((1/3)_{1+5k})=1,\quad v_p((1/2)_{1+5k})=0,\text{ and }v_p((5/12)_{1+5k})=1.$$ It is clear that $v_p((1)_{1+5k})=0$. Therefore, $$v_p\left(\frac{(1/3)_{1+5k}(1/2)_{1+5k}}{(5/12)_{1+5k}(1)_{1+5k}}\right)=0.$$
Whence, $1+5k\in S_{\bm\alpha,\bm\beta,p}$. Consequently, $\#S_{\bm\alpha,\bm\beta,p}=2$. 
Then, it follows from (2) of Theorem~\ref{theo_main_infinite_p} that there are $Q_{1,p}(z), Q_{2,p}(z)\in\mathbb{Q}(z)\cap\mathbb{Z}_{(p)}[[z]][z^{-1}]$ such that 
\begin{equation}\label{eq_ex_1}
\mathfrak{f}\equiv Q_{1,p}(z)\mathfrak{f}^{p}+Q_{2,p}(z)\mathfrak{f}^{p^{2}}\bmod p
\end{equation}
 and the heights of $Q_{1,p}(z)\bmod p$ and $Q_{2,p}(z)\bmod p$ are less than $2p^{2}.$
 \end{exem}

\begin{exem}\label{exam_2}
Consider the hypergeometric series $\mathfrak{g}(z):={}_3F_2(\bm\alpha,\bm\beta;z)$, with $\bm\alpha=(\frac{1}{9},\frac{4}{9},\frac{5}{9})$ and $\bm\beta=(\frac{1}{3},1,1)$. In this case $d_{\bm\alpha,\bm\beta}=9.$ It turns out that $\mathfrak{g}(27^2z)\in\mathbb{Z}[[z]]$. So that, for every prime number $p\neq3$, $\mathfrak{g}(z)$ belongs to $\mathbb{Z}_{(p)}[[z]]$. From Theorem~1.2 of \cite{vmsff}, we get that, for all primes $p\neq3$, $\mathfrak{g}(z)\bmod p$ has degree at most $p^{54}$. Nevertheless, by applying Theorem~\ref{theo_main_infinite_p}, we obtain for some prime numbers $p$ a finer estimate than $p^{54}$. Let $\mathcal{S}$ be the set of prime numbers $p$ such that $p>18$ and $p\equiv8\bmod 9$. Then, for every $p\in\mathcal{S}$, $p^2\equiv1\bmod 9$. We are going to see that, for every $p\in\mathcal{S}$, $\#S_{\mathfrak{D}_p(\bm\alpha),\mathfrak{D}_p(\bm\beta),p}=2$. Let $p$ be in $\mathcal{S}$. We put $\bm\alpha'=(8/9,5/9,4/9)$ and $\bm\beta'=(2/3,1,1)$. As $p\equiv8\bmod 9$ then $p=8+9k$ with $k>1$ and we also have the following equalities: 
\begin{align*}
7+8k&=p(8/9)-1/9 & 4+5k&=p(5/9)-4/9\\
3+4k&=p(4/9)-5/9 & 5+6k&=p(2/3)-1/3.\\
\end{align*}
 So that, $\mathfrak{D}_p(\bm\alpha)=\bm\alpha'$ and $\mathfrak{D}_p(\bm\beta)=\bm\beta'$. Thus, $E_{\mathfrak{D}_p(\bm\alpha),\mathfrak{D}_p(\bm\beta),p}=\{0, 3+3k_p\}$. Furthermore, since $p^2\equiv1\bmod 9$ and $v_p(\bm\alpha)=(0,0,0)=v_p(\bm\beta)$, by Lemma~\ref{cyclic}, we obtain $\mathfrak{D}_p^2(\bm\alpha)=\bm\alpha$ and $\mathfrak{D}_p^2(\bm\beta)=\bm\beta$. Therefore,  $\mathfrak{D}_p(\bm\alpha')=\bm\alpha$ and $\mathfrak{D}_p(\bm\beta')=\bm\beta$ and consequently, we obtain the following equalities: 
 \begin{align*}
 k&=p(1/9)-(8/9) & 3+4k&=p(4/9)-(5/9)\\
 4+5k&=p(5/9)-(4/9) & 2+3k&=p(1/3)-2/3.
 \end{align*}
So, it follows from Lemma~\ref{lemm_val_p} that $$v_p((8/9)_{3+3k})=1,\quad v_p((5/9)_{3+3k})=0,\quad v_p((4/9)_{3+3k})=0,\text{ and }v_p((2/3)_{3+3k})=1.$$ It is clear that $v_p((1)_{3+3k})=0$. Therefore, $$v_p\left(\frac{(8/9)_{3+3k}(5/9)_{3+3k}(4/9)_{3+3k}}{(2/3)_{3+3k}(1)_{3+3k}^2}\right)=0.$$
Whence, $3+3k_p\in S_{\mathfrak{D}_p(\bm\alpha),\mathfrak{D}_p(\bm\beta),p}$. And, it is clear that $0\in S_{\mathfrak{D}_p(\bm\alpha),\mathfrak{D}_p(\bm\beta),p}.$

 Consequently, $\#S_{\mathfrak{D}_p(\bm\alpha),\mathfrak{D}_p(\bm\beta),p}=2$. Then, it follows from (2) of Theorem~\ref{theo_main_infinite_p} that, for every prime $p\in\mathcal{S}$, there are $A_{1,p}(z), A_{2,p}(z)\in\mathbb{Q}(z)\cap\mathbb{Z}_{(p)}[[z]][z^{-1}]$ such that 
\begin{equation}\label{eq_ex_3}
\mathfrak{g}\equiv A_{1,p}(z)\mathfrak{g}^{p^2}+A_{2,p}(z)\mathfrak{g}^{p^{4}}\bmod p
\end{equation}
 and the heights of $A_{1,p}(z)\bmod p$ and $A_{2,p}(z)\bmod p$ are less than $2p^{4}.$
\end{exem}
 

 
\begin{rema}
 An explicit formula for each rational function appearing in Equation \eqref{eq_ex_1} can be obtained from Theorem~\ref{theo_cons}. Further, Theorem~\ref{theo_explicit} gives an explicit formula for each rational function appearing in Equation \eqref{eq_ex_3}.
 \end{rema} 
  
\section{Proof of Theorem~\ref{theo_main_infinite_p}}

The proof of Theorem~\ref{theo_main_infinite_p} is based on Theorem~\ref{theo_main} and Proposition~\ref{prop_d_p}, which are stated below. In order to formulate Theorem~\ref{theo_main}, we have to define the $\textbf{P}_{p,l}$ property. We denote by $\mathbb{Z}_{(p)}^*$ the set of units of $\mathbb{Z}_{(p)}$. As we have already said, the ring $\mathbb{Z}_{(p)}$ is a local ring and its maximal ideal is $(p)\mathbb{Z}_{(p)}$. So, $\gamma\in\mathbb{Z}_{(p)}^*$ if and only if $\gamma\notin(p)\mathbb{Z}_{(p)}$ if and only if $v_p(\gamma)=0$.
\begin{defi}
Let $p$ be a prime number and let $\bm\alpha=(\alpha_1,\ldots,\alpha_n)$, $\bm\beta=(\beta_1,\ldots,\beta_{n-1},1)$ be in $(\mathbb{Z}_{(p)})^n$ and let  $l\geq1$ be an integer. We say that $(\bm\alpha$, $\bm\beta)$ satisfies the $\textbf{P}_{p,l}$ property,  if,  for every $k\in\{1,\ldots,l\}$, we have:

\begin{enumerate}[label=(\textbf{P}\arabic*)]

\item  $\mathfrak{D}_p^{k}(\bm\alpha)$ and $\mathfrak{D}_p^{k}(\bm\beta)$ belong to $(\mathbb{Z}^*_{(p)}\cap(0,1])^n$,

\item  $\mathfrak{D}_p^{k}(\alpha_i)-\mathfrak{D}_p^{k}(\beta_j)$ belongs to $\mathbb{Z}^*_{(p)}$ for $1\leq i,j\leq n$,

\item   $\mathfrak{D}_p^{k}(\beta_j)-\mathfrak{D}_p^{k}(\beta_s)$ belongs to $\mathbb{Z}^*_{(p)}$ if and only if $\beta_j\neq\beta_s$,

\item $p-1\notin I_{\bm\beta}^{(k+1)}$, where $I_{\bm\beta}^{(k+1)}=\{p\mathfrak{D}_p^{k+1}(\beta_j)-\mathfrak{D}_p^{k}(\beta_j): 1\leq j\leq n\text{ and } \beta_j\neq1\}.$

\item For every, $i,j\in\{1,\ldots,n\}$, $1-\mathfrak{D}_p^k(\beta_j)+\mathfrak{D}_p^k(\alpha_i)\in\mathbb{Z}^{*}_{(p)}$ and $1-\mathfrak{D}_p^k(\beta_j)+\mathfrak{D}_p^k(\beta_i)\in\mathbb{Z}^*_{(p)}$.

\end{enumerate}
\end{defi}


We are now ready to state Theorem~\ref{theo_main}.
\begin{theo}\label{theo_main}
Let $\bm\alpha=(\alpha_1,\ldots,\alpha_n)$, $\bm\beta=(\beta_1,\ldots,\beta_{n-1},1)$ be in $(\mathbb{Q}\cap(0,1])^n$ and let $p$ be a prime number such that $f(z):={}_nF_{n-1}(\bm\alpha,\bm\beta;z)$ belongs to $\mathbb{Z}_{(p)}[[z]]$. Suppose that $(\bm\alpha, \bm\beta)$ satisfies the $\textbf{P}_{p,l}$ property, where $l$ is the order of $p$ in $(\mathbb{Z}/d_{\bm\alpha,\bm\beta}\mathbb{Z})^*$. If, for all $i,j\in\{1,\ldots,n\}$, $\alpha_i$ and $\beta_j$ belong to $\mathbb{Z}^*_{(p)}$ then the assertions (1), (2), and (3) of Theorem~\ref{theo_main_infinite_p} hold.
\end{theo}

The next proposition deals with some properties of the map $\mathfrak{D}_p$.  

\begin{prop}\label{prop_d_p}
Let $\gamma=a/b$ and $\tau=c/d$ be in $\mathbb{Q}\cap(0,1]$ written in lowest form and let $p$ be a prime number such that $\gamma,\tau\in\mathbb{Z}_{(p)}$. Then:
\begin{enumerate}
\item $\mathfrak{D}_p(\gamma)=y/b\in\mathbb{Q}\cap(0,1]$, where $y\in\{1,\ldots, b\}$ and $py\equiv a\bmod b$. Moreover, $y$ and $b$ are co-prime,

\item $\mathfrak{D}_p(\gamma)=\mathfrak{D}_p(\tau)$ if and only if $\gamma=\tau$,

\item if $p>b$, $\mathfrak{D}_p(\gamma)\in\mathbb{Z}^{*}_{(p)}$.
\end{enumerate}
\end{prop}

\begin{proof}
(1).  By definition of $\mathfrak{D}_p$, it follows that $p\mathfrak{D}_p(\gamma)=\gamma+s$ with $s\in\{0,\ldots,p-1\}$. So, $\gamma+s=(a+sb)/b\in (p)\mathbb{Z}_{(p)}$. Thus, $y:=(a+sb)/p\in\mathbb{N}$. So, $\mathfrak{D}_p(\gamma)=y/b$ and $py\equiv a\bmod b$. Assume for contradiction that $y=0$. Then, $\mathfrak{D}_p(\gamma)=0$ and $a\in(b)\mathbb{Z}$. But, by hypotheses, $0<a/b\leq1$. Thus, $a=b$ and so, $\gamma=1$. But, $\mathfrak{D}_p(1)=1$. Whence, $0=1$, which is a contradiction. Thus, $y>0$. We now prove that $y\in\{1,\ldots,b-1,b\}$. Since $\gamma\in(0,1]$ and $s\leq p-1$, it follows that $\gamma+s\leq p$. As $\gamma+s=(a+bs)/b$ then $(a+bs)/b\leq p$. Thus, $y=(a+bs)/p\leq b$. Consequently, $y\in\{1,\ldots,b-1,b\}.$ Therefore, $\mathfrak{D}_p(\gamma)\in\mathbb{Q}\cap(0,1]$. Finally, we show that $y$ and $b$ are co-prime. As $py\equiv a\bmod b$ and, by assumption, $a$ and $b$ are co-prime then $y$ and $b$ are co-prime.

(2). It is clear that if $\gamma=\tau$ then $\mathfrak{D}_p(\gamma)=\mathfrak{D}_p(\tau)$. We now prove that if $\mathfrak{D}_p(\gamma)=\mathfrak{D}_p(\tau)$ then $\gamma=\tau$. From (1), we know that $\mathfrak{D}_p(\gamma)=y/b\in\mathbb{Q}\cap(0,1]$, where $y\in\{1,\ldots, b\}$ and $py\equiv a\bmod b$ and $\mathfrak{D}_p(\tau)=x/d\in\mathbb{Q}\cap(0,1]$, where $x\in\{1,\ldots, d\}$ and $px\equiv c\bmod d$. First, we suppose that $\gamma=1$. So, $1=\mathfrak{D}_p(\gamma)=y/b$. In particular, $y=b$. By assumption, $\mathfrak{D}_p(\gamma)=\mathfrak{D}_p(\tau)$. Then, $1=\mathfrak{D}_p(\tau)$ and $x=d$. Thus, $pd\equiv c\bmod d$. Whence,  $c\in(d)\mathbb{Z}$. But, by hypotheses, $0<c/d\leq 1$. For this reason, $c=d$. Therefore, $\tau=1$. Now, we suppose that $\gamma<1$. Assume for contradiction that $\mathfrak{D}_p(\gamma)=1$. Then, $y=b$ and $pb\equiv a\bmod b$. Whence, $a\in(b)\mathbb{Z}$. Since $0<a/b\leq1$, we have $a=b$. Therefore, $\gamma=1$, which is a contradiction. Consequently, $\mathfrak{D}_p(\gamma)<1$. By assumption, $\mathfrak{D}_p(\gamma)=\mathfrak{D}_p(\tau)$. Then, $\mathfrak{D}_p(\tau)<1$. From (1), we know that $\mathfrak{D}_p(\gamma)=y/b$ where $y\in\{1,\ldots, b\}$ and $\mathfrak{D}_p(\tau)=x/d$, where $x\in\{1,\ldots, d\}$. Actually, we have $y\in\{1,\ldots, b-1\}$ and $x\in\{1,\ldots, d-1\}$ because $0<y/b, x/d<1$. But, $y/b=x/d$ because $\mathfrak{D}_p(\gamma)=\mathfrak{D}_p(\tau)$. As $y$, $b$ are co-prime and $x$, $d$ are co-prime then the equality $y/b=x/d$ implies $y=x$ and $b=d$. In particular, we have $py\equiv a\bmod b$ and $py\equiv c\bmod b$. So, $a-c\in(b)\mathbb{Z}$. As $\gamma=a/b<1$, $\tau=c/d\leq1$, and $d=b$ then $|a-c|<b$. But, $a-c\in(b)\mathbb{Z}$. Thus, $a=c$. So, $\gamma=\tau$.

(3). According to (1), $\mathfrak{D}_p(\gamma)=y/b$, where $y\in\{1,\ldots, b\}$ and $y$ and $b$ are co-prime. Since $p>b$, we have $p>y$. In particular, $p$ does not divide $y$ and thus, $y/b\in\mathbb{Z}^{*}_{(p)}$.

\end{proof}
By assuming Theorem~\ref{theo_main}, we can now prove Theorem~\ref{theo_main_infinite_p}.

\begin{proof}[Proof of Theorem~\ref{theo_main_infinite_p}]  Let $p>2d_{\bm\alpha,\bm\beta}$ be a prime number. Then $\alpha_i, \beta_j\in\mathbb{Z}_{(p)}$ for all $1\leq i,j\leq n$ given that $p>d_{\bm\alpha,\bm\beta}$ and $\alpha_i, \beta_j\in(0,1]$ for all $1\leq i,j\leq n$. We first prove that the following two conditions are satisfied. 

\begin{enumerate}[label=\alph*)]
\item For every integer $m\geq0$, $\mathfrak{D}_p^m(\alpha_i)-\mathfrak{D}_p^m(\beta_j)\notin\mathbb{Z}$ for all $1\leq i,j\leq n$.

\item For every integer $m\geq0$, $\mathfrak{D}_p^m(\beta_j)-\mathfrak{D}_p^m(\beta_s)\notin\mathbb{Z}$ if and only if $\beta_j\neq\beta_s$.
\end{enumerate}
 By hypotheses, we know that, for all $i,j\in\{1,\ldots,n\}$, $\alpha_i-\beta_j\notin\mathbb{Z}$. That is equivalent to saying that, $\alpha_i\neq\beta_j$ because, for all
$i,j\in\{1,\ldots,n\}$, $\alpha_i$, $\beta_j$ belong to $(0,1]$. Thus, it follows from (2) of Proposition~\ref{prop_d_p} that, for all integers $m\geq0$, $\mathfrak{D}^m_p(\alpha_i)\neq\mathfrak{D}^m_p(\beta_j)$. Thus, $\mathfrak{D}^m_p(\alpha_i)-\mathfrak{D}^m_p(\beta_j)\notin\mathbb{Z}$ because,  for all integers $m\geq0$, $\mathfrak{D}^m_p(\alpha_i)$, $\mathfrak{D}^m_p(\beta_j)$ belong to $(0,1]$. Therefore, the condition (a) is satisfied. Following the same argument, one shows that the condition (b) is also satisfied. 
 
We now prove that  $(\bm\alpha,\bm\beta)$ satisfies the $\textbf{P}_{p,m}$ property for all integers $m\geq1$. To this end, we set
\begin{itemize}
\item$\mathcal{U}_1=\{\mathfrak{D}_p^m(\alpha_i),\mathfrak{D}_p^m(\beta_j)\}_ {m\geq1, 1\leq i,j \leq n}$.
\end{itemize}
As $p>d_{\bm\alpha,\bm\beta}$ and $\alpha_i,\beta_j\in(0,1]$ for all $1\leq i,j\leq n$ then $\alpha_i,\beta_j\in\mathbb{Z}_{(p)}^*$ for all $1\leq i,j\leq n$. Then, it follows from (1) and (3) of Proposition~\ref{prop_d_p} that $\mathcal{U}_1\subset\mathbb{Z}_{(p)}^{*}\cap(0,1]$.

Now, we consider the following set,

\begin{itemize}
\item $\mathcal{U}_2=\{\mathfrak{D}_p^m(\alpha_i)-\mathfrak{D}_p^m(\beta_j)\}_{m\geq1, 1\leq i,j\leq n}.$
\end{itemize}
We have $0\notin\mathcal{U}_2$ because, by condition a), we know that, for every $m\geq1$, $\mathfrak{D}_p^m(\alpha_i)-\mathfrak{D}_p^m(\beta_j)\notin\mathbb{Z}$ for all $1\leq i,j\leq n$. Now, we prove that for any $1\leq i,j\leq n$, $\mathfrak{D}_p^m(\alpha_i)-\mathfrak{D}_p^m(\beta_j)$ belongs to $\mathbb{Z}_{(p)}^{*}$. Indeed, let $i,j$ be in $\{1,\ldots,n\}$ and let us write $\alpha_i=a/b$ and $\beta_j=c/d$ in lowest form. Then, from (1) of Proposition~\ref{prop_d_p}, we get $\mathfrak{D}_p^m(\alpha_i)=y/b$ and $\mathfrak{D}_p^m(\beta_j)=y'/d$, where $0<y\leq b$, $0<y'\leq d$ and $y,b$ are co-prime and $y',d$ are co-prime. So, $\mathfrak{D}_p^m(\alpha_i)-\mathfrak{D}_p^m(\beta_j)=(yd-y'b)/bd$ and $|yd-y'b|<bd$. As $p>d_{\bm\alpha,\bm\beta}$ then $p>bd$ and thus, $p>|yd-y'b|$. So, $\mathfrak{D}_p^m(\alpha_i)-\mathfrak{D}_p^m(\beta_j)$ belongs to $\mathbb{Z}_{(p)}^{*}$. Thus, $\mathcal{U}_2\subset\mathbb{Z}_{(p)}^{*}$.

We also consider the following set,

\begin{itemize}
\item$\mathcal{U}_3=\{\mathfrak{D}_p^m(\beta_j)-\mathfrak{D}_p^m(\beta_s): 1\leq j,s\leq n,\text{ }\beta_j\neq\beta_s\}_{m\geq1}.$
\end{itemize}
We have $0\notin\mathcal{U}_3$ because, by condition b), we know that, for every $m\geq1$, $\mathfrak{D}_p^m(\beta_j)-\mathfrak{D}_p^m(\beta_s)\notin\mathbb{Z}$ if and only if $\beta_j\neq\beta_s$. Following the same argument as in $\mathcal{U}_2$, one gets $\mathcal{U}_3\subset\mathbb{Z}_{(p)}^{*}$.

We have the following set,

\begin{itemize}
\item$\mathcal{U}_4=\{1-\mathfrak{D}_p^m(\beta_j): 1\leq j\leq n,\text{ }\beta_j\neq 1\}_{m\geq1}.$
\end{itemize}
Assume for contradiction that $0\in\mathcal{U}_4$. Then $1=\mathfrak{D}_p^m(\beta_j)$ for somme $j\in\{1,\ldots,n\}$ with $\beta_j\neq1$.  As $\mathfrak{D}_p^m(1)=1=\mathfrak{D}_p^m(\beta_j)$ then, according to (2) of Proposition~\ref{prop_d_p}, $\beta_j=1$, which is a contradiction. Therefore, $0\notin\mathcal{U}_4$. Following the same argument as in $\mathcal{U}_2$, one gets $\mathcal{U}_4\subset\mathbb{Z}_{(p)}^{*}$.
\begin{itemize}
\item$\mathcal{U}_5=\{1-\mathfrak{D}_p^m(\beta_j)+\mathfrak{D}_p^m(\alpha_i), 1-\mathfrak{D}_p^m(\beta_t)+\mathfrak{D}_p^m(\beta_s)\}_{m\geq1, 1\leq i,j,t,s \leq n}$.
\end{itemize}

We have $0\notin\mathcal{U}_5$ because, from (1) of Proposition~\ref{prop_d_p}, for every $m\geq1$, $\mathfrak{D}_p^m(\bm\alpha)$ and $\mathfrak{D}_p^m(\bm\beta)$ belong to $((0,1]\cap\mathbb{Q})^n$. We now prove that $\mathcal{U}_5\subset\mathbb{Z}_{(p)}^{*}$.  Indeed, let $i,j$ be in $\{1,\ldots,n\}$ and let us write $\alpha_i=a/b$ and $\beta_j=c/d$ in lowest form. Then, from (1) of Proposition~\ref{prop_d_p}, we get $\mathfrak{D}_p^m(\alpha_i)=y/a$ and $\mathfrak{D}_p^m(\beta_j)=y'/d$, where $0<y\leq b$, $0<y'\leq d$ and $y,b$ are co-prime and $y',d$ are co-prime. So, $1-\mathfrak{D}_p^m(\beta_j)+\mathfrak{D}_p^m(\alpha_i)=(bd-y'b+yd)/bd$ and $|bd-y'b+yd|<2bd$. As $p>2d_{\bm\alpha,\bm\beta}$ then $p>2bd$ and thus, $p>|bd-y'b+yd|$. So, $1-\mathfrak{D}_p^m(\beta_j)+\mathfrak{D}_p^m(\alpha_i)\in\mathbb{Z}_{(p)}^{*}$. In a similar way, one shows that, for any $1\leq m,s\leq n$,  $1-\mathfrak{D}_p^m(\beta_t)+\mathfrak{D}_p^m(\beta_s)\in\mathbb{Z}_{(p)}^{*}$.

We now see that $(\bm\alpha,\bm\beta)$ satisfies the $\textbf{P}_{p,m}$ property. The condition $(\textbf{P}1)$ is satisfied because $\mathcal{U}_1\subset\mathbb{Z}_{(p)}^{*}\cap(0,1]$. Now, since $\mathcal{U}_2\subset\mathbb{Z}_{(p)}^{*}$, the condition $(\textbf{P}2)$ is satisfied. The condition $(\textbf{P}3)$ is  also satisfied because $\mathcal{U}_3\subset\mathbb{Z}_{(p)}^{*}$. Assume now for contradiction that $p-1\in I_{\bm\beta}^{(k+1)}$ for some $k\in\{1,\ldots,l\}$. Then, $p-1=p\mathfrak{D}_p^{k+1}(\beta_j)-\mathfrak{D}_p^{k}(\beta_j)$ for $\beta_j\neq1$. So that, $1-\mathfrak{D}_p^{k}(\beta_j)\in\ p\mathbb{Z}_{(p)}$. But,  $1-\mathfrak{D}_p^k(\beta_j)\in\mathbb{Z}_{(p)}^*$ because $1-\mathfrak{D}_p^k(\beta_j)\in\mathcal{U}_4$. So, we obtain a contradiction. Thus, for all $k\in\{1,\ldots,l\}$, $p-1\notin I_{\bm\beta}^{(k+1)}$. Whence, the condition $(\textbf{P}4)$ is satisfied. Finally, the condition $(\textbf{P}5)$ is satisfied since $\mathcal{U}_5\subset\mathbb{Z}_{(p)}^{*}$. Hence, $(\bm\alpha,\bm\beta)$ satisfies the $\textbf{P}_{p,m}$ property for all integers $m\geq1$. In particular, $(\bm\alpha,\bm\beta)$ satisfies the $\textbf{P}_{p,l}$ property, where $l$ is the order of $p$ in $(\mathbb{Z}/d_{\bm\alpha,\bm\beta}\mathbb{Z})^*$.

We have already seen that, for all $i,j\in\{1,\ldots,n\}$, $\alpha_i$ and $\beta_j$ belong to $\mathbb{Z}^*_{(p)}$. Consequently, by applying Theorem~\ref{theo_main}, the assertions (1), (2), and (3) hold, which completes the proof.

\end{proof}

\begin{rema}\label{rema_p_l} Let $\bm\alpha=(\alpha_1,\ldots,\alpha_n)$, $\bm\beta=(\beta_1,\ldots,\beta_{n-1},1)$ be in $(\mathbb{Q}\cap(0,1])^n$, $p>2d_{\bm\alpha,\bm\beta}$ be a prime number. Then, it follows from the proof of Theorem~\ref{theo_main_infinite_p} that $(\bm\alpha,\bm\beta)$ satisfies the $\textbf{P}_{p,m}$ property for all integers $m\geq1$.

\end{rema}

\section{Proof of Theorem~\ref{theo_main}}\label{sec_proof_theo_main}

Theorem~\ref{theo_main} is derived from Propositions \ref{prop_third_step} and \ref{lemm_system}. In order to state Proposition~\ref{prop_third_step}, we need to introduce two more sets and some notations. Let $p$ be a prime number. For $\bm\gamma=(\gamma_1,\ldots,\gamma_n)\in\mathbb{Z}_{(p)}^n$  and $r\in\mathbb{Z}_{\geq0}$, we consider the following two sets: $$\mathcal{P}_{\bm\gamma,r}=\{s\in\{1,\ldots,n\}: (\gamma_s)_r\in p\mathbb{Z}_{(p)}\}\text{ and }\mathcal{C}_{\bm\gamma,r}=\{s\in\{1,\ldots,n\}: (\gamma_s)_r\notin p\mathbb{Z}_{(p)}\}.$$
Note that $\mathcal{C}_{\bm\gamma,r}$ is the complement of $\mathcal{P}_{\bm\gamma,r}$ in $\{1,\ldots,n\}$ and that $\mathcal{C}_{\bm\gamma,0}=\{1,\ldots, n\}$.

Let  $\bm\alpha=(\alpha_1,\ldots,\alpha_n)$, $\bm\beta=(\beta_1,\ldots,\beta_{n-1},1)$ be in $(\mathbb{Q}\setminus\mathbb{Z}_{\leq0})^n$ and let $p$ be a prime number such that $\bm\alpha$ and $\bm\beta$ belong to $\mathbb{Z}_{(p)}^n$. Then, for every integer $a\geq1$ and for every $r\in\{0,\ldots, p-1\}$, we set $\bm\alpha_{a,r}=(\alpha_{1,a,r},\ldots,\ldots,\alpha_{n,a,r})$ and $\bm\beta_{a,r}=(\beta_{1,a,r},\ldots,\ldots,\beta_{n,a,r}),$
where, for every $s\in\{1,\ldots,n\}$, $$\alpha_{s,a,r}= \left\{ \begin{array}{lcc}
             \mathfrak{D}_p^a(\alpha_s) &   if  &  s\in\mathcal{C}_{\mathfrak{D}_p^{a-1}(\bm\alpha),r} \\
             \\  \mathfrak{D}_p^a(\alpha_s)+1&  if & s\in\mathcal{P}_{\mathfrak{D}_p^{a-1}(\bm\alpha),r},  \\
             \end{array}
   \right.$$
 $$\beta_{s,a,r}= \left\{ \begin{array}{lcc}
             \mathfrak{D}_p^a(\beta_s) &   if  &  s\in\mathcal{C}_{\mathfrak{D}_p^{a-1}(\bm\beta),r} \\
             \\  \mathfrak{D}_p^a(\beta_s)+1&  if & s\in\mathcal{P}_{\mathfrak{D}_p^{a-1}(\bm\beta),r}.  \\
             \end{array}
   \right.$$
   Note that, for every $a\geq1$ and $r\in\{0,\ldots,p-1\}$, $\beta_{n,a,r}=1$ because $n\in\mathcal{C}_{\mathfrak{D}_p^{a-1}(\bm\beta),r}$ and $\mathfrak{D}_p^a(1)=1$. So, it makes sense to consider the hypergeometric series ${}_nF_{n-1}(\bm\alpha_{a,r},\bm\beta_{a,r};z)$. We let $f_{a,r}$ denote  the hypergeometric series ${}_nF_{n-1}(\bm\alpha_{a,r},\bm\beta_{a,r};z)$. Thus, $$f_{a,r}=\sum_{m\geq0}\left(\frac{\underset{s\in\mathcal{C}_{\mathfrak{D}_p^{a-1}(\bm\alpha),r}}\prod(\mathfrak{D}_p^{a}(\alpha_s))_m\underset{s\in\mathcal{P}_{\mathfrak{D}_p^{a-1}(\bm\alpha),r}}{\prod}(\mathfrak{D}_p^a(\alpha_s)+1)_m}{\underset{s\in\mathcal{C}_{\mathfrak{D}_p^{a-1}(\bm\beta),r}}\prod(\mathfrak{D}_p^{a}(\beta_s))_m\underset{s\in\mathcal{P}_{\mathfrak{D}_p^{a-1}(\bm\beta),r}}{\prod}(\mathfrak{D}_p^a(\beta_s)+1)_m}\right)z^m.$$
\begin{prop}\label{prop_third_step}
Let the assumptions be as in Theorem~\ref{theo_main}. Then, for every $r\in S_{\mathfrak{D}_p^{l-1}(\bm\alpha),\mathfrak{D}_p^{l-1}(\bm\beta),p}$, $f_{l,r}\in1+z\mathbb{Z}_{(p)}[[z]]$ and $$f_{l,r}\equiv\sum_{j\in S_{\mathfrak{D}_p^{l-1}(\bm\alpha),\mathfrak{D}_p^{l-1}(\bm\beta),p}}Q_{r,j}(z)f_{l,j}^{p^l}\bmod p,$$
where, for every $j\in S_{\mathfrak{D}_p^{l-1}(\bm\alpha),\mathfrak{D}_p^{l-1}(\bm\beta),p}$, $Q_{r,j}(z)$ belongs to $\mathbb{Z}_{(p)}[z]$ and has degree less than $p^l$.  
\end{prop}

\begin{prop}\label{lemm_system}
Let $g_0,g_1,\ldots, g_{t-r}$ be in $\mathbb{F}_p[[z]]$ different from zero and let $l$ be a positive integer. Suppose that, for every $i\in\{0,\ldots, t-r\}$, $$g_i=\sum_{k=1}^{s}P_{i,k}g_0^{p^{kl}}+\sum_{k=1}^{t-r}A_{i,k}g_k^{p^{sl}},$$
where, for all $i\in\{0,\ldots,t-r\}$, $P_{i,1},\ldots, P_{i,s}, A_{i,1},\ldots A_{i,t-r}$ belong to $\mathbb{F}_p(z)$ and their heights are less than $cp^{sl}$.
If $A_{0,t-r}$ is not zero then, for every $i\in\{0,\ldots,t-r-1\}$, $$g_i=\sum_{k=1}^{2s}T_{i,k}g_0^{p^{kl}}+\sum_{k=1}^{t-r-1}D_{i,k}g_k^{p^{2sl}},$$
where, for all $i\in\{0,\ldots,t-r-1\}$, $T_{i,1},\ldots, T_{i,2s}$, $D_{i,1},\ldots D_{i,t-r-1}$ belong to $\mathbb{F}_p(z)$ and their heights are less than $5c(t-r+1)p^{2sl}$.
\end{prop}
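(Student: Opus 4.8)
\textbf{Proof proposal for Proposition~\ref{lemm_system}.}

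The plan is to eliminate the power series $g_{t-r}$ from the system by using the hypothesis that $A_{0,t-r}\neq 0$, and then to ``boost'' the Frobenius exponents from $p^{sl}$ to $p^{2sl}$ by applying the $p^{sl}$-power Frobenius to the relation for $g_0$. First I would isolate $g_{t-r}^{p^{sl}}$ from the $i=0$ equation:
\begin{equation*}
g_{t-r}^{p^{sl}}=\frac{1}{A_{0,t-r}}\Bigl(g_0-\sum_{k=1}^{s}P_{0,k}g_0^{p^{kl}}-\sum_{k=1}^{t-r-1}A_{0,k}g_k^{p^{sl}}\Bigr),
\end{equation*}
which is legitimate over $\mathbb{F}_p(z)$ since $A_{0,t-r}\neq 0$ and the $g_i$ are nonzero. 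Substituting this into the equations for $g_i$ with $i\in\{0,\ldots,t-r-1\}$ removes $g_{t-r}^{p^{sl}}$ and produces, for each such $i$,
\begin{equation*}
g_i=\sum_{k=1}^{s}\widetilde P_{i,k}g_0^{p^{kl}}+\widetilde{A}_{i,0}\,g_0+\sum_{k=1}^{t-r-1}\widetilde{A}_{i,k}g_k^{p^{sl}},
\end{equation*}
where the new coefficients are $\mathbb{F}_p(z)$-rational and, by the standard additivity/multiplicativity estimates for heights of rational functions (a height bound on a sum or product is at most the sum of the heights, up to the obvious constants), each has height at most a constant times $C_1p^{sl}$; tracking the constant carefully gives something like $\leq 3C_1p^{sl}$.

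Next I would use the relation $g_0=\sum_{k=1}^{s}P_{0,k}g_0^{p^{kl}}+\sum_{k=1}^{t-r-1}A_{0,k}g_k^{p^{sl}}+A_{0,t-r}g_{t-r}^{p^{sl}}$ once more, this time to rewrite the bare term $\widetilde{A}_{i,0}g_0$: the point is that after substitution we still have a term that is merely a first power of $g_0$, but the other $g_k$ ($1\le k\le t-r-1$) appear only with exponent $p^{sl}$. Raising the $g_0$-equation to the power $p^{sl}$ (Frobenius is additive over $\mathbb{F}_p$, so $(\sum \cdot)^{p^{sl}}=\sum (\cdot)^{p^{sl}}$ and rational coefficients get their variable $z$ replaced by $z^{p^{sl}}$) yields an expression for $g_k^{p^{sl}}$ is not directly what we want; rather, I would instead substitute the $p^{sl}$-th power of the $g_0$-equation to express the terms $g_k^{p^{sl}}$ appearing above — but the cleaner route is: first clear $g_{t-r}$ as above, then apply Frobenius $(-)^{p^{sl}}$ to each of the $t-r$ original equations to get relations where $g_i^{p^{sl}}$ is written in terms of $g_0^{p^{(k+s)l}}$ ($1\le k\le s$, i.e. exponents $p^{(s+1)l},\ldots,p^{2sl}$) and $g_k^{p^{2sl}}$ ($1\le k\le t-r$). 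Plugging these Frobenius-twisted relations into the $\widetilde{A}_{i,k}g_k^{p^{sl}}$ terms (for $1\le k\le t-r-1$) and into the occurrence of $g_{t-r}^{p^{sl}}$, one lands on
\begin{equation*}
g_i=\sum_{k=1}^{2s}T_{i,k}g_0^{p^{kl}}+\sum_{k=1}^{t-r-1}D_{i,k}g_k^{p^{2sl}},
\end{equation*}
collecting like powers of $g_0$ (note the exponent ranges $\{1,\ldots,s\}$ from the direct terms and $\{s+1,\ldots,2s\}$ from the Frobenius-twisted ones are disjoint and their union is $\{1,\ldots,2s\}$).

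The main obstacle, and where I would spend the most care, is the bookkeeping of heights. Each substitution multiplies rational functions and takes sums of up to $t-r+1$ of them, and the Frobenius twist $z\mapsto z^{p^{sl}}$ multiplies heights by $p^{sl}$. So a coefficient of height $\leq C_1p^{sl}$ becomes, after one Frobenius twist, of height $\leq C_1p^{2sl}$, and after being multiplied by one of the $\widetilde A_{i,k}$ (height $\lesssim 3C_1p^{sl}$) and summed over $\leq t-r+1$ indices, of height $\lesssim (t-r+1)\cdot(3C_1p^{sl}+C_1p^{2sl})$. One checks $p^{sl}\le p^{2sl}$ and absorbs the small additive contributions to arrive at the stated bound $5C_1(t-r+1)p^{2sl}$; the constant $5$ is exactly what is needed to cover the ``$3+1$'' from the elimination step plus the extra room from re-collecting the $g_0$-powers. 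I would present the height estimates using the two elementary inequalities $\mathrm{ht}(u+v)\le \mathrm{ht}(u)+\mathrm{ht}(v)$ and $\mathrm{ht}(uv)\le\mathrm{ht}(u)+\mathrm{ht}(v)$ for $u,v\in\mathbb{F}_p(z)$, together with $\mathrm{ht}(u(z^{p^{sl}}))=p^{sl}\,\mathrm{ht}(u(z))$, and verify that $A_{0,t-r}\neq 0$ is used only to guarantee that the division producing $1/A_{0,t-r}$ is valid (its height contributes, but $\mathrm{ht}(1/A_{0,t-r})=\mathrm{ht}(A_{0,t-r})\le C_1p^{sl}$, so it is harmless).
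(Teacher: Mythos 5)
Your overall strategy is the paper's: use $A_{0,t-r}\neq 0$ to eliminate $g_{t-r}$ linearly, then apply the $p^{sl}$-power Frobenius to double the exponents and substitute. But the assembly you describe has a genuine gap. In your ``cleaner route'' you Frobenius-twist the \emph{original} equations, and you correctly note that the twisted relations then involve $g_k^{p^{2sl}}$ for $1\le k\le t-r$ --- in particular $g_{t-r}^{p^{2sl}}$. The target expression only allows $g_k^{p^{2sl}}$ for $k\le t-r-1$, and nowhere do you explain how the $g_{t-r}^{p^{2sl}}$ term is removed after substitution. Relatedly, your first elimination is performed on the equations for $i\in\{0,\ldots,t-r-1\}$: for $i=0$, substituting the isolated expression for $g_{t-r}^{p^{sl}}$ back into the very equation it came from is a tautology, so you are left with no non-circular way to replace the term $A_{0,t-r}g_{t-r}^{p^{sl}}$ in the $i=0$ equation --- and $i=0$ is the case you ultimately need for Theorem~\ref{theo_main}.

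The paper's proof fixes exactly this ordering. One forms, for every $i\in\{1,\ldots,t-r\}$ (crucially \emph{including} $i=t-r$), the combination $A_{0,t-r}g_i-A_{i,t-r}g_0$, which kills $g_{t-r}^{p^{sl}}$ and yields $g_i=\sum_{k=1}^{s}Q_{i,k}g_0^{p^{kl}}+\sum_{k=1}^{t-r-1}B_{i,k}g_k^{p^{sl}}+C_ig_0$. These relations are already free of $g_{t-r}$, so raising \emph{them} to the power $p^{sl}$ gives $g_i^{p^{sl}}$ in terms of $g_0^{p^{(s+k)l}}$, $g_k^{p^{2sl}}$ with $k\le t-r-1$ only, and $g_0^{p^{sl}}$; substituting these into the \emph{original} equations for $i\in\{0,\ldots,t-r-1\}$ (whose right-hand sides contain no bare $g_0$) produces the claimed form, the twisted bare term $C_j^{p^{sl}}g_0^{p^{sl}}$ simply merging into the coefficient of $g_0^{p^{sl}}$. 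Your height toolkit ($\mathrm{ht}(u+v)\le\mathrm{ht}(u)+\mathrm{ht}(v)$, $\mathrm{ht}(uv)\le\mathrm{ht}(u)+\mathrm{ht}(v)$, $\mathrm{ht}(u(z^{p^{sl}}))=p^{sl}\mathrm{ht}(u)$) is the right one and matches the paper's bookkeeping, but as written the estimates are only sketched and would need to be carried out on the corrected chain of substitutions.
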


By assuming Propositions \ref{prop_third_step} and \ref{lemm_system}, we are now in a position to prove Theorem~\ref{theo_main}.

\begin{proof}[Proof of Theorem~\ref{theo_main}]
Note that $f_{l,0}$ is the hypergeometric series $_nF_{n-1}(\bm\alpha,\bm\beta;z)$ because, by assumption $\bm\alpha,\bm\beta$ belong to $(\mathbb{Z}_{(p)}^*\cap(0,1])^n$ and thus, Lemma~\ref{cyclic} implies $\mathfrak{D}_p^{l}(\bm\alpha)=\bm\alpha$ and $\mathfrak{D}_p^{l}(\bm\beta)=\bm\beta$. 

1). If $\#S_{\mathfrak{D}_p^{l-1}(\bm\alpha),\mathfrak{D}_p^{l-1}(\bm\beta),p}=1$ then, by Proposition~\ref{prop_third_step}, we have $$f_{l,0}(z)\equiv Q_{0,0}f_{l,0}^{p^l}\bmod p,$$
where $Q_{0,0}$ is a polynomial with coefficients in $\mathbb{Z}_{(p)}$ whose degree is less than $p^l$.  

2). Suppose that $\#S_{\mathfrak{D}_p^{l-1}(\bm\alpha),\mathfrak{D}_p^{l-1}(\bm\beta),p}=2$. Let us write $S_{\mathfrak{D}_p^{l-1}(\bm\alpha),\mathfrak{D}_p^{l-1}(\bm\beta),p}=\{r_0,r_1\}$ with $r_0=0$. Then, by Proposition  \ref{prop_third_step}, we have 
\begin{equation}\label{eq_tre}
f_{l,0}\equiv Q_{0,0}f_{l,0}^{p^l}+Q_{0,1}f_{l,r_1}^{p^{l}}\bmod p,
\end{equation}
\begin{equation}\label{eq_tt}
f_{l,r_1}\equiv Q_{1,0}f_{l,0}^{p^l}+Q_{1,1}f_{l,r_1}^{p^{l}}\bmod p,
\end{equation}
where $Q_{0,0}(z)$, $Q_{0,1}(z)$, $Q_{1,0}(z)$ and $Q_{1,1}(z)$ belong to $\mathbb{Z}_{(p)}[z]$ and their degrees are less than $p^l$.

If $Q_{0,1}(z)\bmod p$ is the zero polynomial then, from \eqref{eq_tre}, we have $f_{l,0}\equiv Q_{0,0}f_{l,0}^{p^l}\bmod p$. Now, suppose that $Q_{0,1}(z)\bmod p$ is not the zero polynomial. From Equations \eqref{eq_tre} and \eqref{eq_tt}, we have $$Q_{0,1}f_{l,r_1}-Q_{1,1}f_{l,0}\equiv (Q_{0,1}Q_{1,0}-Q_{1,1}Q_{0,0})f_{l,0}^{p^l}\bmod p.$$
Since $Q_{0,1}(z)\bmod p$ is not the zero polynomial, it follows from the previous equality that $$f_{l,r_1}\equiv\frac{Q_{0,1}Q_{1,0}-Q_{1,1}Q_{0,0}}{Q_{0,1}}f_{l,0}^{p^l}+\frac{Q_{1,1}}{Q_{0,1}}f_{l,0}\bmod p.$$
As the characteristic of $\mathbb{F}_p$ is $p$, then $$f_{l,r_1}^{p^l}\equiv\left(\frac{Q_{0,1}Q_{1,0}-Q_{1,1}Q_{0,0}}{Q_{0,1}}\right)^{p^l}f_{l,0}^{p^{2l}}+\left(\frac{Q_{1,1}}{Q_{0,1}}\right)^{p^l}f_{l,0}^{p^l}\bmod p.$$ By replacing the previous equality into \eqref{eq_tre}, we obtain
\begin{align*}
f_{l,0}&\equiv Q_{0,0}f_{l,0}^{p^l}+Q_{0,1}\left(\left(\frac{Q_{0,1}Q_{1,0}-Q_{1,1}Q_{0,0}}{Q_{0,1}}\right)^{p^l}f_{l,0}^{p^{2l}}+\left(\frac{Q_{1,1}}{Q_{0,1}}\right)^{p^l}f_{l,0}^{p^l}\right)\bmod p\\
&\equiv\left(Q_{0,0}+Q_{0,1}\left(\frac{Q_{1,1}}{Q_{0,1}}\right)^{p^l}\right)f_{l,0}^{p^l}+Q_{0,1}\left(\frac{Q_{0,1}Q_{1,0}-Q_{1,1}Q_{0,0}}{Q_{0,1}}\right)^{p^l}f_{l,0}^{p^{2l}}\bmod p.
\end{align*}
Since the degrees of $Q_{0,0}$, $Q_{0,1}$, $Q_{1,0}$ and $Q_{1,1}$ are less than or equal to $p^l-1$, we conclude that the height of $Q_{0,0}+Q_{0,1}\left(\frac{Q_{1,1}}{Q_{0,1}}\right)^{p^l}$ is less than $p^{2l}$ and that the height of the rational function $Q_{0,1}\left(\frac{Q_{0,1}Q_{1,0}-Q_{1,1}Q_{0,0}}{Q_{0,1}}\right)^{p^l}$ is less than $2p^{2l}$. 

3).  Suppose thta $t+1=\#S_{\mathfrak{D}_p^{l-1}(\bm\alpha),\mathfrak{D}_p^{l-1}(\bm\beta),p}$ with $t>1$. Let us write $S_{\mathfrak{D}_p^{l-1}(\bm\alpha),\mathfrak{D}_p^{l-1}(\bm\beta),p}=\{r_0,r_1,\ldots, r_t\}$ with $r_0=0$. Then, by Proposition~\ref{prop_third_step}, we have for all $i\in\{0,\ldots,t\}$, 
\begin{equation}\label{eq_system}
f_{l,r_i}\equiv Q_{i,0}f_{l,0}^{p^l}+\sum_{j=1}^tQ_{i,j}(z)f_{l,r_j}^{p^l}\bmod p,
\end{equation}
where, for every $j\in\{0,\ldots, t\}$, $Q_{i,j}(z)$ is a polynomial with coefficients in $\mathbb{Z}_{(p)}$ whose degree is less than $p^l$. 

If, for every $j\in\{1,\ldots, t\}$, $Q_{0,j}(z)\bmod p$ is the zero polynomial then, it follows from \eqref{eq_system} that $f_{l,0}\equiv Q_{0,0}(z)f_{l,0}^{p^l}\bmod p$.  Now, suppose that there is $j\in\{1,\ldots, t\}$ such that $Q_{0,j}\bmod p$ is not the zero polynomial. Without losing any generality we can assume that $Q_{0,t}\bmod p$ is not the zero polynomial. Then, by applying Proposition~\ref{lemm_system} to \eqref{eq_system}, it follows that, for all  $i\in\{0,\ldots, t-1\}$,  
\begin{equation}\label{eq_system_2}
f_{l,r_i}\equiv\sum_{k=1}^{2}T_{i,k,2}f_{l,0}^{p^{kl}}+\sum_{k=1}^{t-1}D_{i,k,2}f_{l,r_k}^{p^{2l}}\bmod p,
\end{equation}
where, $T_{i,1,2}, T_{i,2,2}$, $D_{i,1,2},\ldots D_{i,t-1,2}$ belong to $\mathbb{Q}(z)\cap\mathbb{Z}_{(p)}[[z]][z^{-1}]$ and their heights are less than $5(t+1)p^{2l}$.

Now, if for all $k\in\{1,\ldots,t-1\}$, $D_{0,k,2}\bmod p=0$ then, $f_{l,0}\equiv P_{1}f_{l,0}^{p^l}+P_2f_{l,0}^{p^{2l}}\bmod p$, where $P_1=T_{0,1,2}$ and $P_2=T_{0,2,2}$.

Now, suppose that there is $k\in\{1,\ldots,t-1\}$ such that $D_{0,k,2}\bmod p$ is not the zero polynomial. Without losing any generality we can assume that $D_{0,t-1,2}\bmod p$ is not the zero polynomial. Then, by applying Proposition~\ref{lemm_system} to \eqref{eq_system_2}, we infer that, for all  $i\in\{0,\ldots, t-2\}$,  
\begin{equation}\label{eq_system_3}
f_{l,r_i}\equiv\sum_{k=1}^{4}T_{i,k,3}f_{l,0}^{p^{kl}}+\sum_{k=1}^{t-2}D_{i,k,3}f_{l,r_k}^{p^{4l}}\bmod p,
\end{equation}
where, $T_{i,1,3},\ldots, T_{i,4,3}$, $D_{i,1,3},\ldots D_{i,t-2,3}$ belong to $\mathbb{Q}(z)\cap\mathbb{Z}_{(p)}[[z]][z^{-1}]$ and their heights are less than $5^2(t+1)tp^{4l}$.

 After making the previous process $t$-times we deduce that, $$f_{l,0}\equiv Q_{1}f_{l,0}^{p^l}+Q_{2}f_{l,0}^{p^{2l}}+\cdots+Q_{2^t}f_{l,0}^{p^{2^{t}l}}\bmod p,$$
where, for every $i\in\{1,\ldots, t\}$, $Q_{i}$ belongs to $\mathbb{Q}(z)\cap\mathbb{Z}_{(p)}[[z]][z^{-1}]$ and the height of $Q_i\bmod p$ is less than  $5^{t}(t+1)!p^{2^{t}l}$.

\end{proof}

\section{Proof of Proposition~\ref{lemm_system}}\label{sec_proof_lemm_system}
\begin{proof}
By hypotheses, for every $i\in\{0,\ldots, t-r\}$, we have
\begin{equation}\label{eq_system_0}
g_i=\sum_{k=1}^{s}P_{i,k}g_0^{p^{kl}}+\sum_{k=1}^{t-r}A_{i,k}g_k^{p^{sl}}.
\end{equation}
Then, for every $i\in\{1,\ldots, t-r\}$,  $$A_{0,t-r}g_i-A_{i,t-r}g_0=\sum_{k=1}^{s}(A_{0,t-r}P_{i,k}-A_{i,t-r}P_{0,k})g_{0}^{p^{kl}}+\sum_{k=1}^{t-r-1}(A_{0,t-r}A_{i,k}-A_{i,t-r}A_{0,k})g_{k}^{p^{sl}}.$$
By assumption, $A_{0,t-r}$ is not zero. Then, it follows from the last equality that, for every $i\in\{1,\ldots, t-r\}$, $$g_{i}=\sum_{k=1}^{s}Q_{i,k}g_{0}^{p^{kl}}+\sum_{k=1}^{t-r-1}B_{i,k}g_{k}^{p^{sl}}+C_ig_0,$$
where, $$Q_{i,k}=\frac{A_{0,t-r}P_{i,k}-A_{i,t-r}P_{0,k}}{A_{0,t-r}},\text{  }B_{i,k}=\frac{A_{0,t-r}A_{i,k}-A_{i,t-r}A_{0,k}}{A_{0,t-r}},\text{ and }C_i=\frac{A_{i,t-r}}{A_{0,t-r}}.$$
As the characteristic of $\mathbb{F}_p$ is $p$ then, for every $i\in\{1,\ldots, t-r\}$, $$g_{i}^{p^{sl}}=\sum_{k=1}^{s}Q_{i,k}^{p^{sl}}g_{0}^{p^{(s+k)l}}+\sum_{k=1}^{t-r-1}B_{i,k}^{p^{sl}}g_{k}^{p^{2sl}}+C_i^{p^{sl}}g_0^{p^{sl}}.$$
By substituting this last equality into \eqref{eq_system_0}, for every $i\in\{0,\ldots,t-r-1\}$,  we get

\begin{align*}
g_i=&\sum_{k=1}^{s-1}P_{i,k}g_0^{p^{kl}}+\left(P_{i,s}+\sum_{k=1}^{t-r}A_{i,k}C_k^{p^{sl}}\right)g_0^{p^{sl}}+\sum_{k=1}^{s}\left(\sum_{j=1}^{t-r}A_{i,j}Q_{j,k}^{p^{sl}}\right)g_0^{p^{(s+k)l}}\\
&+\sum_{k=1}^{t-r-1}\left(\sum_{j=1}^{t-r}A_{i,j}B_{j,k}^{p^{sl}}\right)g_k^{p^{2sl}}.
\end{align*}

For every $k\in\{1,\ldots, s-1\}$, we set $T_{i,k}=P_{i,k}$, for $k=s$, we set $T_{i,s}=P_{i,s}+\sum_{k=1}^{t-r}A_{i,k}C_k^{p^{sl}}$, for every $k\in\{s+1,\ldots, 2s\}$, we set $T_{i,k}=\sum_{j=1}^{t-r}A_{i,j}Q_{j,k-s}^{p^{sl}}$, and finally for every $k\in\{1,\ldots,t-r-1\}$, we set $D_{i,k}=\sum_{j=1}^{t-r}A_{i,j}B_{j,k}^{p^{sl}}$. So that, for every $i\in\{0,\ldots,t-r-1\}$, we have $$g_i=\sum_{k=1}^{2s}T_{i,k}g_0^{p^{kl}}+\sum_{k=1}^{t-r-1}D_{i,k}g_k^{p^{2sl}}.$$
Finally, we are going to see that, for all $i\in\{0,\ldots,t-r-1\}$, the heights of   $T_{i,1},\ldots, T_{i,2s}$, $D_{i,1},\ldots D_{i,t-r-1}$ are less than $5c(t-r+1)p^{2sl}$. In fact, if $k\in\{1,\ldots, s-1\}$ then, $T_{i,k}=P_{i,k}$.  By hypotheses, the height of $P_{i,k}$ is less than $cp^{sl}$. So, if $k\in\{1,\ldots, s-1\}$ then the height of $T_{i,k}$ is less than $cp^{sl}$. By definition, $T_{i,s}=P_{i,s}+\sum_{k=1}^{t-r}A_{i,k}C_k^{p^{sl}}.$ Recall that, for every $k\in\{1,\ldots,t-r-1\}$, $C_k=\frac{A_{k,t-r}}{A_{0,t-r}}$. Thus, the height of $C_k$ is less than $2cp^{sl}$ because, by hypotheses, the heights of $A_{k,t-r}$ and $A_{0,t-r}$ are less than $cp^{sl}$. So, for  every $k\in\{1,\ldots,t-r-1\}$, the height of $C_k^{p^{sl}}$ is less than $2cp^{2sl}$. Again, by hypotheses, the height of $A_{i,k}$ is $cp^{sl}$. Thus, the height of $A_{i,k}C_k^{p^{sl}}$ is less than $3cp^{2sl}$. Thus, the height of $T_{i,s}$ is less than $3c(t-r+1)p^{2sl}$. Now, we prove that,  for every $k\in\{s+1,\ldots, 2s\}$,  the height of $T_{i,k}$ is less than $5c(t-r)p^{2sl}$. By definition, $T_{i,k}=\sum_{j=1}^{t-r}A_{i,j}Q_{j,k-s}^{p^{sl}}$. The height of $Q_{j,k-s}$ is less than $4cp^{sl}$ because, $Q_{j,k-s}=P_{j,k-s}-(A_{j,t-r}P_{0,k-s})\big/(A_{0,t-r})$ and by hypotheses, the heights of $A_{0,t-r}, A_{j,t-r}, P_{j,k-s}, P_{0,k-s}$ are less that $cp^{sl}$. Thus, the height of $Q_{j,k-s}^{p^{sl}}$ is less than $4cp^{2sl}$. So, the height of $ A_{i,j}Q_{j,k-s}^{p^{sl}}$ is less than $5cp^{2sl}$. Whence, the height of $T_{i,k}$ is less than $5c(t-r)p^{2sl}$. Similarly, it follows that, for every $k\in\{1,\ldots,t-r-1\}$, the height of $D_{i,k}$ is $5c(t-r)p^{2sl}$. This completes the proof of our proposition.

\end{proof}

\section{Proof of Proposition~\ref{prop_third_step}}\label{sec_proof_prop_third_step}

The proof of Proposition~\ref{prop_third_step} relies on Lemmas \ref{lemm_aux} and \ref{lemm_second_step}.  
\begin{lemm}\label{lemm_aux}
Let $\bm\alpha=(\alpha_1,\ldots,\alpha_n)$, $\bm\beta=(\beta_1,\ldots,\beta_{n-1},1)$ be in $(\mathbb{Q}\setminus\mathbb{Z}_{\leq0})^n$ and let $p$ be a prime number such that $p$ does not divide $d_{\bm\alpha,\bm\beta}$. Suppose that $(\bm\alpha, \bm\beta)$ satisfies the $\textbf{P}_{p,l}$ property. Then:
 \begin{enumerate}[label=\Alph*)]
 \item for every $(a,r)\in\{1,\ldots, l\}\times\{0,\ldots, p-1\}$, the map $\sigma:S_{\bm\alpha_{a,r},\bm\beta_{a,r},p}\rightarrow S_{\mathfrak{D}_p^a(\bm\alpha),\mathfrak{D}_p^a(\bm\beta),p}$ given by $$\sigma(t)= \left\{ \begin{array}{lcc}
             t &   if  & t\equiv 1-\mathfrak{D}_p^a(\beta_{s})\bmod p  \text{ with }  s\in\mathcal{C}_{\mathfrak{D}_p^{a-1}(\bm\beta),r} \\
             \\ t+1 &  if & t\equiv-\mathfrak{D}_p^a(\beta_{s})\bmod p \text{ with } s\in\mathcal{P}_{\mathfrak{D}_p^{a-1}(\bm\beta),r}  \\
             \end{array}
   \right.$$
 is well-defined and is bijective. Moreover, its inverse $\tau:S_{\mathfrak{D}_p^a(\bm\alpha),\mathfrak{D}_p^a(\bm\beta),p}\rightarrow S_{\bm\alpha_{a,r},\bm\beta_{a,r},p}$ is given by $$\tau(t)= \left\{ \begin{array}{lcc}
             t &   if  & t\equiv 1-\mathfrak{D}_p^a(\beta_{s})\bmod p  \text{ with }  s\in\mathcal{C}_{\mathfrak{D}_p^{a-1}(\bm\beta),r} \\
             \\ t-1 &  if & t\equiv1-\mathfrak{D}_p^a(\beta_{s})\bmod p \text{ with } s\in\mathcal{P}_{\mathfrak{D}_p^{a-1}(\bm\beta),r};  \\
             \end{array}
   \right.$$
      
 \item for every $(a,r)\in\{1,\ldots, l\}\times\{0,\ldots, p-1\}$, the following equalities hold for every $t\in S_{\bm\alpha_{a,r},\bm\beta_{a,r},p}$,  $\mathcal{P}_{\bm\alpha_{a,r},t}=\mathcal{P}_{\mathfrak{D}_p^a(\bm\alpha),\sigma(t)}$, $\mathcal{C}_{\bm\alpha_{a,r},t}=\mathcal{C}_{\mathfrak{D}_p^a(\bm\alpha),\sigma(t)}$,  $\mathcal{P}_{\bm\beta_{a,r},t}=\mathcal{P}_{\mathfrak{D}_p^a(\bm\beta),\sigma(t)}$, and  $\mathcal{C}_{\bm\beta_{a,r},t}=\mathcal{C}_{\mathfrak{D}_p^a(\bm\beta),\sigma(t)}$.
 
 \end{enumerate}
\end{lemm}

\begin{lemm}\label{lemm_second_step}
Let $\bm\alpha=(\alpha_1,\ldots,\alpha_n)$, $\bm\beta=(\beta_1,\ldots,\beta_{n-1},1)$ be in $(\mathbb{Q}\setminus\mathbb{Z}_{\leq0})^n$, let $p$ be a prime number such that $p>d_{\bm\alpha,\bm\beta}$ and $f(z):={}_nF_{n-1}(\bm\alpha,\bm\beta;z)$ belongs to $\mathbb{Z}_{(p)}[[z]]$. Suppose that $(\bm\alpha,\bm\beta)$ satisfies the $\textbf{P}_{p,l}$ property, where $l$ is the order of $p$ in $(\mathbb{Z}/d_{\bm\alpha,\bm\beta}\mathbb{Z})^*$. Then, for every $a\in\{1,\ldots,l\}$ and for every $r\in S_{\mathfrak{D}_p^{a-1}(\bm\alpha),\mathfrak{D}_p^{a-1}(\bm\beta),p}$, $f_{a,r}\in1+z\mathbb{Z}_{(p)}[[z]]$ and $$f\equiv\sum_{r\in S_{\mathfrak{D}_p^{a-1}(\bm\alpha),\mathfrak{D}_p^{a-1}(\bm\beta),p}} Q_{a,r}(z)f_{a,r}^{p^a}\bmod p,$$
where, for every $r\in S_{\mathfrak{D}_p^{a-1}(\bm\alpha),\mathfrak{D}_p^{a-1}(\bm\beta),p}$, $Q_{a,r}(z)$ belongs to $\mathbb{Z}_{(p)}[z]$ and has degree less than $p^a$.
 \end{lemm}
 
Section \ref{sec_proof_lemm_aux} is devoted to proving Lemma  \ref{lemm_aux} and  Lemma~\ref{lemm_second_step} will be proved in Section \ref{sec_proof_lemm_second_step}. The following remarks are useful in the proofs of Proposition~\ref{prop_third_step} and Lemmas~\ref{lemm_second_step} and \ref{lemm_explicit_poly}.
\begin{rema}\label{rema_d_p_1}\hfill
\begin{enumerate}
\item If $\gamma\in\mathbb{Z}_{(p)}^*$ then $\mathfrak{D}_p(\gamma+1)=\mathfrak{D}_p(\gamma)$. Indeed, $p\mathfrak{D}_p(\gamma)-\gamma$ belongs to $\{1,\ldots,p-1\}$ because $\gamma\in\mathbb{Z}_{(p)}^*$. Whence, $p\mathfrak{D}_p(\gamma)-\gamma-1$ belongs to $\{0,\ldots,p-1\}$. So, $\mathfrak{D}_p(\gamma+1)=\mathfrak{D}_p(\gamma)$.

\item Let $p$ be a prime number and let $\bm\alpha=(\alpha_1,\ldots,\alpha_n)$, $\bm\beta=(\beta_1,\ldots,\beta_{n-1},1)$ be in $(\mathbb{Z}_{(p)})^n$. Suppose that $(\bm\alpha$, $\bm\beta)$ satisfies the $\textbf{P}_{p,l}$ property. In this remark we show that, for all $(a,r)\in\{1,\ldots, l-1\}\times\{0,\ldots,p-1\}$,  $\mathfrak{D}_p(\bm\alpha_{a,r})=\mathfrak{D}_p^{a+1}(\bm\alpha)$ and $\mathfrak{D}_p(\bm\beta_{a,r})=\mathfrak{D}_p^{a+1}(\bm\beta)$.  As $(\bm\alpha,\bm\beta)$ satisfies the $\textbf{P}_{p,l}$ property and $a\in\{1,\ldots,l-1\}$ then, from $(\textbf{P}1)$, we know that $\mathfrak{D}_p^a(\bm\alpha)$, $\mathfrak{D}_p^a(\bm\beta)$, belong to $(\mathbb{Z}_{(p)}^*)^n$. Let $w$ be in $\{1,\ldots, n\}$. If $w\in\mathcal{C}_{\mathfrak{D}_p^{a-1}(\bm\alpha),r}$ then $\alpha_{w,a,r}=\mathfrak{D}_p^a(\alpha_w)$. Thus, $\mathfrak{D}_p(\alpha_{w,a,r})=\mathfrak{D}_p^{a+1}(\alpha_w)$. Now, if $w\in\mathcal{P}_{\mathfrak{D}_p^{a-1}(\bm\alpha),r}$ then $\alpha_{w,a,r}=\mathfrak{D}_p^a(\alpha_w)+1$. Hence, by (1), $\mathfrak{D}_p(\mathfrak{D}_p^a(\alpha_{w})+1)=\mathfrak{D}_p^{a+1}(\alpha_{w})$ because $\mathfrak{D}_p^a(\alpha_{w})$ belongs to $\mathbb{Z}_{(p)}^*$. Therefore,  $\mathfrak{D}_p(\bm\alpha_{a,r})=\mathfrak{D}_p^{a+1}(\bm\alpha)$. In a similar fashion, it follows that $\mathfrak{D}_p(\bm\beta_{a,r})=\mathfrak{D}_p^{a+1}(\bm\beta)$.
\end{enumerate}
\end{rema}

\begin{rema}\label{rema_d_p_h}\hfill

Let $p$ be a prime number and let $\bm\alpha=(\alpha_1,\ldots,\alpha_n)$, $\bm\beta=(\beta_1,\ldots,\beta_{n-1},1)$ be in $(\mathbb{Z}_{(p)})^n$. Suppose that $(\bm\alpha$, $\bm\beta)$ satisfies the $\textbf{P}_{p,l}$ property. The goal of this remark is to show that, for all $(a,r)\in\{1,\ldots, l-1\}\times\{0,\ldots,p-1\}$, $(\bm\alpha_{a,r},\bm\beta_{a,r})$ satisfies the $\textbf{P}_{p,l'}$ property, where $l'$ is the order of $p$ in $(\mathbb{Z}/d_{\bm\alpha_{a,r},\bm\beta_{a,r}}\mathbb{Z})^*$. For this purpose, we will first show that, for any $1\leq k\leq l$, there is $1\leq s\leq l$ such that $\mathfrak{D}^k_p(\bm\alpha_{a,r})=\mathfrak{D}_p^s(\bm\alpha)$ and $\mathfrak{D}^k_p(\bm\beta_{a,r})=\mathfrak{D}_p^s(\bm\beta)$. From (2) of Remark~\ref{rema_d_p_1}, we have $\mathfrak{D}_p(\bm\alpha_{a,r})=\mathfrak{D}_p^{a+1}(\bm\alpha)$ and $\mathfrak{D}_p(\bm\beta_{a,r})=\mathfrak{D}_p^{a+1}(\bm\beta)$. Consequently, for all $k\in\{1,\ldots, l\}$, we get $\mathfrak{D}^k_p(\bm\alpha_{a,r})=\mathfrak{D}_p^{a+k}(\bm\alpha)$ and $\mathfrak{D}^k_p(\bm\beta_{a,r})=\mathfrak{D}_p^{a+k}(\bm\beta)$. Let $k$ be in $\{1,\ldots, l\}$ and let us write $a+k=s+tl$ with $0\leq s<l$. We have $p^l=1\bmod d_{\mathfrak{D}_p^k(\bm\alpha),\mathfrak{D}_p^k(\bm\beta)}$ because, from the definition of $\mathfrak{D}_p$, it follows that $d_{\mathfrak{D}_p^k(\bm\alpha),\mathfrak{D}_p^k(\bm\beta)}$ divides $d_{\bm\alpha,\bm\beta}$.\footnote{Let $\alpha$ be in $\mathbb{Z}_{(p)}$. Then, from the definition of $\mathfrak{D}_p$, it follows that the denominator of $\mathfrak{D}_p(\alpha)$ is a factor of the denominator of $\alpha$.} Further, from $(\textbf{P}1)$ we have $\mathfrak{D}_p^k(\bm\alpha)$, $\mathfrak{D}_p^k(\bm\beta)\in\mathbb{Z}_{(p)}^*\cap(0,1]$. Then, by Lemma~\ref{cyclic}, we get that, for all $m\geq1$, $\mathfrak{D}_{p}^{ml}(\mathfrak{D}_p^k(\bm\alpha))=\mathfrak{D}_p^k(\bm\alpha)$ and  $\mathfrak{D}_{p}^{ml}(\mathfrak{D}_p^k(\bm\beta))=\mathfrak{D}_p^k(\bm\beta)$. So, $\mathfrak{D}^k_p(\bm\alpha_{a,r})=\mathfrak{D}_p^s(\bm\alpha)$ if $s\neq0$ and if $s=0$, we have $\mathfrak{D}^k_p(\bm\alpha_{a,r})=\mathfrak{D}_p^{tl}(\bm\alpha)=\mathfrak{D}_p^{(t-1)l}(\mathfrak{D}_p^l(\bm\alpha))=\mathfrak{D}_p^l(\bm\alpha)$. Similarly, we have $\mathfrak{D}^k_p(\bm\beta_{a,r})=\mathfrak{D}_p^s(\bm\beta)$ if $s\neq0$ and $\mathfrak{D}^k_p(\bm\beta_{a,r})=\mathfrak{D}_p^l(\bm\beta)$ if $s=0$. Consequently, $(\bm\alpha_{a,r},\bm\beta_{a,r})$ satisfies the $\textbf{P}_{p,l}$ property. Finally, from the definition of $\mathfrak{D}_p$ again, it immediately follows that $d_{\bm\alpha_{a,r},\bm\beta_{a,r}}$ divides $d_{\bm\alpha,\bm\beta}$. Hence, if $l'$ is the order of $p$ in $(\mathbb{Z}/d_{\bm\alpha_{a,r},\bm\beta_{a,r}}\mathbb{Z})^*$ then $l'$ divides $l$ and therefore, $l'\leq l$. So, $(\bm\alpha_{a,r},\bm\beta_{a,r})$ satisfies the $\textbf{P}_{p,l'}$ property. 
\end{rema}

\begin{rema}\label{rema_p_l_i}
Let $\bm\alpha=(\alpha_1,\ldots,\alpha_n)$, $\bm\beta=(\beta_1,\ldots,\beta_{n-1},1)$ be in $(\mathbb{Q}\cap(0,1])^n$ and let $p$ be a prime number such that $\bm\alpha$, $\bm\beta$ belong to $(\mathbb{Z}_{(p)}^*)^n$ and  let $l$ be the order of $p$ in $(\mathbb{Z}/d_{\bm\alpha,\bm\beta}\mathbb{Z})^*$. 
\begin{enumerate}
\item We show that, for all integers $m\geq1$ and $r\in\{0,\ldots, p-1\}$, $\mathfrak{D}_p^m(\bm\alpha_{l,r})=\mathfrak{D}_p^m(\bm\alpha)$ and $\mathfrak{D}_p^m(\bm\beta_{l,r})=\mathfrak{D}_p^m(\bm\beta)$. Indeed, by assumption, $\bm\alpha$, $\bm\beta$ belong to $(\mathbb{Z}_{(p)}^*)^n$ and thus, according to Lemma~\ref{cyclic}, we get that $\mathfrak{D}_p^l(\bm\alpha)=\bm\alpha$ and $\mathfrak{D}_p^l(\bm\beta)=\bm\beta$. Therefore, for $1\leq s\leq n$, $\alpha_{s,l,r}=\mathfrak{D}_p^l(\alpha_s)=\alpha_s$ if $s\in\mathcal{C}_{\mathfrak{D}_p^{l-1}(\bm\alpha),r}$ or $\alpha_{s,l,r}=\mathfrak{D}_p^l(\alpha_s)+1=\alpha_s+1$ if $s\in\mathcal{P}_{\mathfrak{D}_p^{l-1}(\bm\alpha),r}$. From (1) of Remark~\ref{rema_d_p_1}, it follows that, for all $\gamma\in\{\alpha_1,\ldots, \alpha_n,\beta_1,\ldots,\beta_n\}$, $\mathfrak{D}_p(\gamma+1)=\mathfrak{D}_p(\gamma)$. Consequently, for all integers $m\geq1$, $\mathfrak{D}_p^m(\alpha_{s,l,r})=\mathfrak{D}_p^m(\alpha_s)$ for all $1\leq s\leq n$. In a similar fashion, one gets that, for all integers $m\geq1$, $\mathfrak{D}_p^m(\beta_{s,l,r})=\mathfrak{D}_p^m(\beta_s)$ for all $1\leq s\leq n$. So that, for all $m\geq1$, $\mathfrak{D}_p^m(\bm\alpha_{l,r})=\mathfrak{D}_p^m(\bm\alpha)$ and $\mathfrak{D}_p^m(\bm\beta_{l,r})=\mathfrak{D}_p^m(\bm\beta)$. 

\item As an immediately consequence of (1), we get that if $(\bm\alpha, \bm\beta)$ satisfies the  $\textbf{P}_{p,l}$ property then, for all $r\in\{0,\ldots,p-1\}$, $(\bm\alpha_{l,r},\bm\beta_{l,r})$ satisfies also the $\textbf{P}_{p,l}$ property. Furthermore, it is easily seen that $d_{\bm\alpha_{l,r},\bm\beta_{l,r}}=d_{\bm\alpha,\bm\beta}$. Thus, $l$ is also the order of $p$ in $(\mathbb{Z}/d_{\bm\alpha_{l,r},\bm\beta_{l,r}}\mathbb{Z})^*$.
\end{enumerate}
\end{rema}

We can now prove Proposition~\ref{prop_third_step}.

\begin{proof}[Proof of Proposition~\ref{prop_third_step}]
Note that $f_{l,0}$ is the hypergeometric series $_nF_{n-1}(\bm\alpha,\bm\beta;z)$ because, by assumption $\bm\alpha,\bm\beta$ belong to $(\mathbb{Z}_{(p)}^*\cap(0,1])^n$ and thus, Lemma~\ref{cyclic} implies $\mathfrak{D}_p^{l}(\bm\alpha)=\bm\alpha$ and $\mathfrak{D}_p^{l}(\bm\beta)=\bm\beta$. We first prove that, for all $j\in S_{\mathfrak{D}_p^{l-1}(\bm\alpha),\mathfrak{D}_p^{l-1}(\bm\beta),p}$, $f_{l,j}\in 1+z\mathbb{Z}_{(p)}[[z]]$. By assumption,  $(\bm\alpha,\bm\beta)$ satisfies the $\textbf{P}_{p,l}$ property and $f_{l,0}$ belongs to $\mathbb{Z}_{(p)}[[z]]$. By applying Lemma~\ref{lemm_second_step} to $f_{l,0}(z)$ we get that, for every $j\in S_{\mathfrak{D}_p^{l-1}(\bm\alpha),\mathfrak{D}_p^{l-1}(\bm\beta),p}$, $f_{l,j}\in1+z\mathbb{Z}_{(p)}[[z]]$.


Let $i$ be an arbitrary element in $S_{\mathfrak{D}_p^{l-1}(\bm\alpha),\mathfrak{D}_p^{l-1}(\bm\beta),p}$. We are going to prove that $$f_{l,i}\equiv\sum_{j\in S_{\mathfrak{D}_p^{l-1}(\bm\alpha),\mathfrak{D}_p^{l-1}(\bm\beta),p}} Q_{i,j}f_{l,j}^{p^l}\bmod p,$$
where, each $Q_{i,j}$ belongs to $\mathbb{Z}_{(p)}[z]$ with degree less than $p^l$. For this purpose, we are going to see that we can apply Lemma~\ref{lemm_second_step} to $f_{l,i}$. By definition $f_{l,i}$ is the hypergeometric series $_nF_{n-1}(\bm\alpha_{l,i},\bm\beta_{l,i};z)$.  By (2) of Remark~\ref{rema_p_l_i}, we know that $l$ is also the order of $p$ in $(\mathbb{Z}/d_{\bm\alpha_{l,i},\bm\beta_{l,i}}\mathbb{Z})^*$ and that $(\bm\alpha_{l,i},\bm\beta_{l,i})$ satisfies the $\textbf{P}_{p,l}$ property. Further, we also have $f_{l,i}\in\mathbb{Z}_{(p)}[[z]]$. So we are in a position to apply Lemma~\ref{lemm_second_step} to $f_{l,i}$ and therefore,
  \begin{equation}\label{eq_f_i}
 f_{l,i}\equiv\sum_{j\in S_{\mathfrak{D}_p^{l-1}(\bm\alpha_{l,i}),\mathfrak{D}_p^{l-1}(\bm\beta_{l,i}),p}} Q_{i,j}g_{i,j}^{p^l}\bmod p,
  \end{equation}
where, $Q_{i,j}(z)\in\mathbb{Z}_{(p)}[z]$ has degree less than $p^l$ and $$g_{i,j}=\sum_{m\geq0}\left(\frac{\underset{s\in\mathcal{C}_{\mathfrak{D}_p^{l-1}(\bm\alpha_{l,i}),j}}\prod(\mathfrak{D}_p^{l}(\alpha_{s,l,i}))_m\underset{s\in\mathcal{P}_{\mathfrak{D}_p^{l-1}(\bm\alpha_{l,i}),j}}{\prod}(\mathfrak{D}_p^l(\alpha_{s,l,i})+1)_m}{\underset{s\in\mathcal{C}_{\mathfrak{D}_p^{l-1}(\bm\beta_{l,i}),j}}\prod(\mathfrak{D}_p^{l}(\beta_{s,l,i}))_m\underset{s\in\mathcal{P}_{\mathfrak{D}_p^{l-1}(\bm\beta_{l,i}),j}}{\prod}(\mathfrak{D}_p^l(\beta_{s,l,i})+1)_m}\right)z^m\in\mathbb{Z}_{(p)}[[z]].$$
  We have already seen that $\mathfrak{D}_p^{l}(\bm\alpha)=\bm\alpha$. Thus, $\alpha_{s,l,i}=\alpha_s$ if $s\in\mathcal{C}_{\mathfrak{D}_p^{l-1}(\bm\alpha_{l,i}),j}$ and $\alpha_{s,l,i}=\alpha_s$+1 if $s\in\mathcal{P}_{\mathfrak{D}_p^{l-1}(\bm\alpha_{l,i}),j}$. Since, by assumption $\bm\alpha$ belongs to $(\mathbb{Z}_{(p)}^*\cap(0,1])^n$, by (1) of Remark~\ref{rema_p_l_i}, we deduce that $\mathfrak{D}_p^l(\alpha_{s,l,i})=\mathfrak{D}_p^l(\alpha_s)=\alpha_s$. In a similar way, one obtains  $\mathfrak{D}_p^l(\beta_{s,l,i})=\mathfrak{D}_p^l(\beta_s)=\beta_s$. Hence,
$$
g_{i,j}=\sum_{m\geq0}\left(\frac{\underset{s\in\mathcal{C}_{\mathfrak{D}_p^{l-1}(\bm\alpha_{l,i}),j}}\prod(\alpha_{s})_m\underset{s\in\mathcal{P}_{\mathfrak{D}_p^{l-1}(\bm\alpha_{l,i}),j}}{\prod}(\alpha_{s}+1)_m}{\underset{s\in\mathcal{C}_{\mathfrak{D}_p^{l-1}(\bm\beta_{l,i}),j}}\prod(\beta_{s})_m\underset{s\in\mathcal{P}_{\mathfrak{D}_p^{l-1}(\bm\beta_{l,i}),j}}{\prod}(\beta_{s}+1)_m}\right)z^m.\\
$$
Suppose that $l\geq2$. We want to see that $g_{i,j}=f_{l,j}$. Since $l\geq2$, by (1) of Remark~\ref{rema_p_l_i}, we know that $\mathfrak{D}_p^{l-1}(\bm\alpha_{l,i})=\mathfrak{D}_p^{l-1}(\bm\alpha)$ and that $\mathfrak{D}_p^{l-1}(\bm\beta_{l,i})=\mathfrak{D}_p^{l-1}(\bm\beta)$. Therefore, we have the equality, $S_{\mathfrak{D}_p^{l-1}(\bm\alpha_{l,i}),\mathfrak{D}_p^{l-1}(\bm\beta_{l,i}),p}=S_{\mathfrak{D}_p^{l-1}(\bm\alpha),\mathfrak{D}_p^{l-1}(\bm\beta),p}$. Whence, $g_{i,j}=f_{l,j}$ for all $j\in S_{\mathfrak{D}_p^{l-1}(\bm\alpha_{l,i}),\mathfrak{D}_p^{l-1}(\bm\beta_{l,i}),p}$. So, from Equation~\eqref{eq_f_i}, we get $$f_{l,i}\equiv\sum_{j\in S_{\mathfrak{D}_p^{l-1}(\bm\alpha),\mathfrak{D}_p^{l-1}(\bm\beta),p}} Q_{i,j}f_{l,j}^{p^l}\bmod p.$$

Suppose now that  $l=1$. We want to see that $g_{i,j}=f_{1,\sigma(j)}$, where $\sigma:S_{\bm\alpha_{l,i},\bm\beta_{l,i},p}\rightarrow S_{\bm\alpha,\bm\beta,p}$ is the map given by Lemma~\ref{lemm_aux}. Since $l=1$, it is clear that $S_{\mathfrak{D}_p^{l-1}(\bm\alpha_{l,i}),\mathfrak{D}_p^{l-1}(\bm\beta_{l,i}),p}=S_{\bm\alpha_{l,i},\bm\beta_{l,i},p}$. Since $l$ is the order of $p$ in $(\mathbb{Z}/d_{\bm\alpha,\bm\beta}\mathbb{Z})^*$ and by hypotheses, $\bm\alpha$ and $\bm\beta$ belong to $(\mathbb{Z}^*_{(p)})^n$, by using Lemma~\ref{cyclic}, we obtain $\mathfrak{D}_p(\bm\alpha)=\bm\alpha$ and $\mathfrak{D}_p(\bm\beta)=\bm\beta$. Further, $p$ does not divide $d_{\bm\alpha,\bm\beta}$ because, by assumption, for all $i,j\in\{1,\ldots,n\}$, $\alpha_i$, $\beta_j$ belong to $\mathbb{Z}_{(p)}^{*}$. We also have, by assumption again, $(\bm\alpha, \bm\beta)$ satisfies the $\textbf{P}_{p,1}$ property. So, by B) of Lemma~\ref{lemm_aux}, we infer that, for all  $j\in S_{\bm\alpha_{l,i},\bm\beta_{l,i},p}$, we have  $\mathcal{P}_{\bm\alpha_{l,i},j}=\mathcal{P}_{\bm\alpha,\sigma(j)}$, $\mathcal{C}_{\bm\alpha_{l,i},j}=\mathcal{C}_{\bm\alpha,\sigma(j)}$, $\mathcal{P}_{\bm\alpha_{l,i},j}=\mathcal{P}_{\bm\alpha,\sigma(j)}$, and $\mathcal{C}_{\bm\alpha_{l,i},j}=\mathcal{C}_{\bm\alpha,\sigma(j)}$. Thus, for all $j\in S_{\bm\alpha_{l,i},\bm\beta_{l,i},p}$, $g_{i,j}=f_{1,\sigma(j)}$. Since, by A) of Lemma~\ref{lemm_aux}, $\sigma:S_{\bm\alpha_{l,i},\bm\beta_{l,i},p}\rightarrow S_{\bm\alpha,\bm\beta,p}$ is a bijective map, it follows from Equation~\eqref{eq_f_i} that  $$f_{1,i}\equiv\sum_{j\in S_{\bm\alpha,\bm\beta,p}} Q_{i,\tau(j)}f_{1,j}^{p}\bmod p.$$
 This completes the proof because $i$ is an arbitrary element in $S_{\mathfrak{D}_p^{l-1}(\bm\alpha),\mathfrak{D}_p^{l-1}(\bm\beta),p}$.
\end{proof}

\section{Proof of Lemma~\ref{lemm_aux}}\label{sec_proof_lemm_aux}
     
A)  Let $t$ be in $S_{\bm\alpha_{a,r},\bm\beta_{a,r},p}$.  Then $t\bmod p\equiv1-\beta_{s,a,r}\bmod p$ for some $s\in\{1,\ldots,n\}$ and $v_p(\mathcal{Q}_{\bm\alpha_{a,r},\bm\beta_{a,r}}(t))=0$. We are going to see that $\sigma$ is well-defined. For this purpose, we first show that if there exists $s'\in\{1,\ldots,n\}$ such that $t\bmod p\equiv1-\beta_{s',a,r}\bmod p$ then $s\in\mathcal{C}_{\mathfrak{D}_p^{a-1}(\bm\beta),r}$ if and only if $s'\in\mathcal{C}_{\mathfrak{D}_p^{a-1}(\bm\beta),r}$ and second, we prove that $\sigma(t)\in S_{\mathfrak{D}^a_p(\bm\alpha),\mathfrak{D}^a_p(\bm\beta),p}$. Suppose that $s\in\mathcal{C}_{\mathfrak{D}_p^{a-1}(\bm\beta),r}$. Assume for contradiction that $s'\in\mathcal{P}_{\mathfrak{D}_p^{a-1}(\bm\beta),r}$. Thus, $\beta_{s,a,r}=\mathfrak{D}^a_p(\beta_s)$ and $\beta_{s',a,r}=\mathfrak{D}^a_p(\beta_{s'})+1$. Then $\mathfrak{D}^{a}_p(\beta_s)\equiv\mathfrak{D}^{a}_p(\beta_{s'})+1\bmod p$ because $1-\beta_{s,a,r}\bmod p\equiv t\bmod p\equiv1-\beta_{s',a,r}\bmod p$. Hence, $1+\mathfrak{D}^{a}_p(\beta_{s'})-\mathfrak{D}^{a}_p(\beta_s)\in p\mathbb{Z}_{p}$. That is a contradiction to $(\textbf{P}5)$. Therefore, we have $s'\in\mathcal{C}_{\mathfrak{D}_p^{a-1}(\bm\beta),r}$. In a similar way, one shows that if $s'\in\mathcal{C}_{\mathfrak{D}_p^{a-1}(\bm\beta),r} $ then $s\in\mathcal{C}_{\mathfrak{D}_p^{a-1}(\bm\beta),r}$. 

We now prove that $\sigma(t)\in S_{\mathfrak{D}^a_p(\bm\alpha),\mathfrak{D}^a_p(\bm\beta),p}$. By definition, $t\bmod p\equiv1-\beta_{s,a,r}\bmod p$ for some $s\in\{1,\ldots,n\}$. Thus $$t\bmod p= \left\{ \begin{array}{lcc}
             1-\mathfrak{D}_p^a(\beta_s)\bmod p &   if  &  s\in\mathcal{C}_{\mathfrak{D}_p^{a-1}(\bm\beta),r} \\
             \\  -\mathfrak{D}_p^a(\beta_s)\bmod p&  if & s\in\mathcal{P}_{\mathfrak{D}_p^{a-1}(\bm\beta),r}.  \\
             \end{array}
   \right.$$

\textbullet\quad Suppose that $s\in\mathcal{C}_{\mathfrak{D}_p^{a-1}(\bm\beta),r}$. Then, $t\bmod p=1-\mathfrak{D}_p^a(\beta_{s})\bmod p$ and therefore, $t\in E_{\mathfrak{D}_p^a(\bm\alpha),\mathfrak{D}_p^a(\bm\beta),p}$. We now show that $t\in S_{\mathfrak{D}_p^a(\bm\alpha),\mathfrak{D}_p^a(\bm\beta),p}$. It is clear that we have the following equality
 \begin{equation}\label{eq_5}
 \mathcal{Q}_{\bm\alpha_{a,r},\bm\beta_{a,r}}(t)=\mathcal{Q}_{\mathfrak{D}_p^a(\bm\alpha),\mathfrak{D}_p^a(\bm\beta)}(t)\cdot\frac{\underset{w\in\mathcal{P}_{\mathfrak{D}_p^{a-1}(\bm\alpha),r}}\prod(\mathfrak{D}_p^{a}(\alpha_w)+t)}{\underset{w\in\mathcal{P}_{\mathfrak{D}_p^{a-1}(\bm\beta),r}}\prod(\mathfrak{D}_p^{a}(\beta_w)+t)}\cdot\frac{\underset{w\in\mathcal{P}_{\mathfrak{D}_p^{a-1}(\bm\beta),r}}\prod\mathfrak{D}_p^{a}(\beta_w)}{\underset{w\in\mathcal{P}_{\mathfrak{D}_p^{a-1}(\bm\alpha),r}}\prod\mathfrak{D}_p^{a}(\alpha_w)}.
 \end{equation}
 By (\textbf{P}1), we know that $\mathfrak{D}_p^a(\beta_{1}),\ldots,\mathfrak{D}_p^a(\beta_{n}),\mathfrak{D}_p^a(\alpha_{1}),\ldots,\mathfrak{D}_p^a(\alpha_{n})$ belong to $\mathbb{Z}_{(p)}^*$. Then,
 $$\left(\underset{w\in\mathcal{P}_{\mathfrak{D}_p^{a-1}(\bm\beta),r}}\prod\mathfrak{D}_p^{a}(\beta_w)\right)\bigg/\left(\underset{w\in\mathcal{P}_{\mathfrak{D}_p^{a-1}(\bm\alpha),r}}\prod\mathfrak{D}_p^{a}(\alpha_w)\right)\in\mathbb{Z}_{(p)}^*.$$
 Now, assume for contradiction that there is $\gamma\in\{\alpha_{1},\ldots,\alpha_{n},\beta_{1},\ldots,\beta_{n}\}$ such that $\mathfrak{D}_p^a(\gamma)+t$ belongs to $p\mathbb{Z}_{(p)}$. Then $t=p\mathfrak{D}_p^{a+1}(\gamma)-\mathfrak{D}_p^a(\gamma)$ because $0\leq t<p$. As $t\bmod p=1-\mathfrak{D}_p^a(\beta_{s})\bmod p$ then $1-\mathfrak{D}_p^a(\beta_{s})+\mathfrak{D}_p^a(\gamma)\in p\mathbb{Z}_{(p)}$, which is a contradiction to $(\textbf{P}5)$. Consequently,  $\mathfrak{D}_p^a(\alpha_{1})+t,\ldots,\mathfrak{D}_p^a(\alpha_{n})+t,\mathfrak{D}_p^a(\beta_{1})+t,\ldots,\mathfrak{D}_p^a(\beta_{n})+t$ belong to $\mathbb{Z}_{(p)}^*$. Therefore, 
 $$\left(\underset{w\in\mathcal{P}_{\mathfrak{D}_p^{a-1}(\bm\alpha),r}}\prod(\mathfrak{D}_p^{a}(\alpha_w)+t)\right)\bigg/\left(\underset{w\in\mathcal{P}_{\mathfrak{D}_p^{a-1}(\bm\beta),r}}\prod(\mathfrak{D}_p^{a}(\beta_w)+t)\right)\in\mathbb{Z}_{(p)}^*.$$
 Thus, from \eqref{eq_5}, we get $v_p(\mathcal{Q}_{\mathfrak{D}_p^a(\bm\alpha),\mathfrak{D}_p^a(\bm\beta)}(t))=0$ because $v_p(\mathcal{Q}_{\bm\alpha_{a,r},\bm\beta_{a,r}}(t))=0$. So that $t\in S_{\mathfrak{D}_p^a(\bm\alpha),\mathfrak{D}_p^a(\bm\beta),p}$.
 
\textbullet\quad Suppose that $s\in\mathcal{P}_{\mathfrak{D}_p^{a-1}(\bm\beta),r}$. Then, $t\bmod p\equiv-\mathfrak{D}_p^a(\beta_{s})\bmod p$.

We first prove that $t+1\in E_{\mathfrak{D}_p^a(\bm\alpha),\mathfrak{D}_p^a(\bm\beta),p}$. As $0\leq r<p$ then, $(1)_{r}\notin p\mathbb{Z}_{(p)}$. By assumption, $s\in\mathcal{P}_{\mathfrak{D}_p^{a-1}(\bm\beta),r}$ and thus, $(\mathfrak{D}_p^{a-1}(\beta_s))_{r}\in p\mathbb{Z}_{(p)}$. Hence, $\beta_s\neq 1$. Now, assume for contradiction that $t=p-1$. Since $t\bmod p=-\mathfrak{D}_p^a(\beta_{j})\bmod p$, we have $p-1+\mathfrak{D}_p^a(\beta_{s})\in p\mathbb{Z}_{(p)}$. Therefore, we have $p\mathfrak{D}_p^{a+1}(\beta_{s})-\mathfrak{D}_p^a(\beta_{s})=p-1$. Since $\beta_{s}\neq1$, it follows that $p-1\in I_{\bm\beta}^{(a+1)}$. But, according to $(\textbf{P}4)$, $p-1$ does not belong to $I_{\bm\beta}^{(a+1)}$. For this reason $t\neq p-1$. As $S_{\bm\alpha_{a,r},\bm\beta_{a,r},p}\subset\{0,1,\ldots,p-1\}$ then $t<p-1$. 
Hence, $t+1\in E_{\mathfrak{D}_p^a(\bm\alpha),\mathfrak{D}_p^a(\bm\beta),p}$ because $t+1\leq p-1$ and $t+1\bmod p\equiv1-\mathfrak{D}_p^a(\beta_{s})\bmod p$. 

We now proceed to see that $t+1\in S_{\mathfrak{D}_p^a(\bm\alpha),\mathfrak{D}_p^a(\bm\beta),p}$. It is clear that we have the following equality
\begin{equation}\label{eq_6}
\mathcal{Q}_{\mathfrak{D}_p^a(\bm\alpha),\mathfrak{D}_p^a(\bm\beta)}(t+1)=\mathcal{Q}_{\bm\alpha_{a,r},\bm\beta_{a,r}}(t)\cdot \frac{\underset{w\in\mathcal{C}_{\mathfrak{D}_p^{a-1}(\bm\alpha),r}}\prod(\mathfrak{D}_p^{a}(\alpha_w)+t)}{\underset{w\in\mathcal{C}_{\mathfrak{D}_p^{a-1}(\bm\beta),r}}\prod(\mathfrak{D}_p^{a}(\beta_w)+t)}\cdot\frac{\underset{w\in\mathcal{P}_{\mathfrak{D}_p^{a-1}(\bm\alpha),r}}\prod\mathfrak{D}_p^{a}(\alpha_w)}{\underset{w\in\mathcal{P}_{\mathfrak{D}_p^{a-1}(\bm\beta),r}}\prod\mathfrak{D}_p^{a}(\beta_w)}.
\end{equation}

It follows from $(\textbf{P}1)$ that $\mathfrak{D}_p^a(\alpha_{1}),\ldots$, $\mathfrak{D}_p^a(\alpha_{n}),\mathfrak{D}_p^a(\beta_{1}),\ldots,\mathfrak{D}_p^a(\beta_{n})$ belong to $\mathbb{Z}_{(p)}^*$. Then,
$$\left(\underset{w\in\mathcal{P}_{\mathfrak{D}_p^{a-1}(\bm\alpha),r}}\prod\mathfrak{D}_p^{a}(\alpha_w)\right)\bigg/\left(\underset{w\in\mathcal{P}_{\mathfrak{D}_p^{a-1}(\bm\beta),r}}\prod\mathfrak{D}_p^{a}(\beta_w)\right) \in\mathbb{Z}_{(p)}^*.$$ 
Now, assume for contradiction that there is $\gamma\in\{\alpha_{1},\ldots,\alpha_{n}\}$ such that $\mathfrak{D}_p^a(\gamma)+t$ belongs to $p\mathbb{Z}_{(p)}$. Since $t\bmod p=-\mathfrak{D}_p^a(\beta_{s})\bmod p$, it follows that $\mathfrak{D}_p^a(\gamma)-\mathfrak{D}_p^a(\beta_{s})\in p\mathbb{Z}_{(p)}$. That is a contradiction because, according to $(\textbf{P}2)$, $\mathfrak{D}_p^a(\gamma)-\mathfrak{D}_p^a(\beta_{s})\notin p\mathbb{Z}_{(p)}$. For this reason, the elements $\mathfrak{D}_p^a(\alpha_{1})+t,\ldots,\mathfrak{D}_p^a(\alpha_{n})+t$ belong to $\mathbb{Z}_{(p)}^*$.

Again, suppose, to derive a contradiction, that there is $w\in\mathcal{C}_{\mathfrak{D}_p^{a-1}(\bm\beta),r}$ such that $\mathfrak{D}_p^a(\beta_w)+t$ belongs to $p\mathbb{Z}_{(p)}$. Since $t\equiv-\mathfrak{D}_p^a(\beta_{s})\bmod p$, we obtain $\mathfrak{D}_p^a(\beta_w)-\mathfrak{D}_p^a(\beta_{s})\in p\mathbb{Z}_{(p)}$. Then, according to $(\textbf{P}3)$, $\beta_w=\beta_s$. On the one hand, we have $(\mathfrak{D}_p^{a-1}(\beta_s))_{r}\in p\mathbb{Z}_{(p)}$ because $s\in\mathcal{P}_{\mathfrak{D}_p^{a-1}(\bm\beta),r}$. On the other hand, $(\mathfrak{D}_p^{a-1}(\beta_s))_{r}\notin p\mathbb{Z}_{(p)}$  because $\beta_w=\beta_s$ and $w\in\mathcal{C}_{\mathfrak{D}_p^{a-1}(\bm\beta),r}$. So that, we have a contradiction. For this reason, for every $w\in\mathcal{C}_{\mathfrak{D}_p^{a-1}(\bm\beta),r}$, $\mathfrak{D}_p^a(\beta_{w})+t$ belongs to $\mathbb{Z}_{(p)}^*$.  Consequently, the element
$$\left(\underset{w\in\mathcal{C}_{\mathfrak{D}_p^{a-1}(\bm\alpha),r}}\prod(\mathfrak{D}_p^{a}(\alpha_w)+t)\right)\bigg/\left(\underset{w\in\mathcal{C}_{\mathfrak{D}_p^{a-1}(\bm\beta),r}}\prod(\mathfrak{D}_p^{a}(\beta_w)+t)\right) \in\mathbb{Z}_{(p)}^*.$$ 
Then, it follows from Equation~\eqref{eq_6} that $v_p(\mathcal{Q}_{\mathfrak{D}_p^a(\bm\alpha),\mathfrak{D}_p^a(\bm\beta)}(t+1))=0$ because $v_p(\mathcal{Q}_{\bm\alpha_{a,r},\bm\beta_{a,r}}(t))=0$. So that, we have $t+1\in S_{\mathfrak{D}_p^a(\bm\alpha),\mathfrak{D}_p^a(\bm\beta),p}.$

Therefore, we have  $\sigma$ is well-defined. In order to prove that $\sigma$ is a bijective map we are going to show that its inverse is $\tau:S_{\mathfrak{D}_p^a(\bm\alpha),\mathfrak{D}_p^a(\bm\beta),p}\rightarrow S_{\bm\alpha_{a,r},\bm\beta_{a,r},p}$. 

Let $t$ be in $S_{\mathfrak{D}_p^a(\bm\alpha),\mathfrak{D}_p^a(\bm\beta),p}$. Then $v_p(\mathcal{Q}_{\mathfrak{D}_p^a(\bm\alpha),\mathfrak{D}_p^a(\bm\beta)}(t))=0$ and $t\bmod p\equiv 1-\mathfrak{D}_p^{a}(\beta_s)\bmod p$ for some $s\in\{1,\ldots,n\}$. We are going to see that $\tau$ is well-defined. For this purpose, we first show that if there exists $s'\in\{1,\ldots,n\}$ such that $t\bmod p\equiv1-\mathfrak{D}_p^{a}(\beta_{s'})\bmod p$ then, $s\in\mathcal{C}_{\mathfrak{D}_p^{a-1}(\bm\beta),r}$ if and only if $s'\in\mathcal{C}_{\mathfrak{D}_p^{a-1}(\bm\beta),r}$ and second, we prove that $\tau(t)\in S_{\bm\alpha_{a,r},\bm\beta_{a,r},p}$. Suppose that $s\in\mathcal{C}_{\mathfrak{D}_p^{a-1}(\bm\beta),r}$. Note that $\mathfrak{D}_p^{a}(\beta_s)-\mathfrak{D}_p^{a}(\beta_{s'})\in p\mathbb{Z}_{(p)}$ because $1-\mathfrak{D}_p^{a}(\beta_s)\bmod p=t\bmod p=1-\mathfrak{D}_p^{a}(\beta_s')$. So according to $(\textbf{P}3)$, $\beta_s=\beta_{s'}$. So, $(\mathfrak{D}_p^{a-1}(\beta_{s'}))_{r}\notin p\mathbb{Z}_{(p)}$ because $s\in\mathcal{C}_{\mathfrak{D}_p^{a-1}(\bm\beta),r}$. Hence, $s'\in\mathcal{C}_{\mathfrak{D}_p^{a-1}(\bm\beta),r}$. In a similar way, one shows that if $s'\in\mathcal{C}_{\mathfrak{D}_p^{a-1}(\bm\beta),r} $ then $s\in\mathcal{C}_{\mathfrak{D}_p^{a-1}(\bm\beta),r}$. We now proceed to show that  $\tau(t)\in S_{\bm\alpha_{a,r},\bm\beta_{a,r},p}$.

\textbullet\quad Suppose that $t\bmod p\equiv1-\mathfrak{D}_p^a(\beta_{s})\bmod p$ with $s\in\mathcal{C}_{\mathfrak{D}_p^{a-1}(\bm\beta),r}$. Then $\beta_{s,a,r}=\mathfrak{D}^a_p(\beta_s)$ and therefore, $t\in E_{\bm\alpha_{a,r},\bm\beta_{a,r},p}$. We want to see that $t\in S_{\bm\alpha_{a,r},\bm\beta_{a,r},p}$. We have the following equality
\begin{equation}\label{eq_7}
\mathcal{Q}_{\mathfrak{D}_p^a(\bm\alpha),\mathfrak{D}_p^a(\bm\beta)}(t)=\mathcal{Q}_{\bm\alpha_{a,r},\bm\beta_{a,r}}(t)\cdot\frac{\underset{w\in\mathcal{P}_{\mathfrak{D}_p^{a-1}(\bm\beta),r}}\prod(\mathfrak{D}_p^{a}(\beta_w)+t)}{\underset{w\in\mathcal{P}_{\mathfrak{D}_p^{a-1}(\bm\alpha),r}}\prod(\mathfrak{D}_p^{a}(\alpha_w)+t)}\cdot\frac{\underset{w\in\mathcal{P}_{\mathfrak{D}_p^{a-1}(\bm\alpha),r}}\prod\mathfrak{D}_p^{a}(\alpha_w)}{\underset{w\in\mathcal{P}_{\mathfrak{D}_p^{a-1}(\bm\beta),r}}\prod\mathfrak{D}_p^{a}(\beta_w)}.
\end{equation}
By $(\textbf{P}1)$, we know that $\mathfrak{D}_p^a(\alpha_{1}),\ldots,\mathfrak{D}_p^a(\alpha_{n}),$ $\mathfrak{D}_p^a(\beta_{1}),\ldots,\mathfrak{D}_p^a(\beta_{n})$ belong to $\mathbb{Z}_{(p)}^*$. Therefore,
$$\left(\underset{w\in\mathcal{P}_{\mathfrak{D}_p^{a-1}(\bm\alpha),r}}\prod\mathfrak{D}_p^{a}(\alpha_w)\right)\bigg/\left(\underset{w\in\mathcal{P}_{\mathfrak{D}_p^{a-1}(\bm\beta),r}}\prod\mathfrak{D}_p^{a}(\beta_w)\right)\in\mathbb{Z}_{(p)}^*.$$
Now, assume for contradiction that there is $\gamma\in\{\alpha_{1},\ldots,\alpha_{n},\beta_{1},\ldots,\beta_{n}\}$ such that $\mathfrak{D}_p^a(\gamma)+t$ belongs to $p\mathbb{Z}_{(p)}$. Since $0\leq t<p$, it follows that $t=p\mathfrak{D}_p^{a+1}(\gamma)-\mathfrak{D}_p^a(\gamma)$. As $t\bmod p\equiv1-\mathfrak{D}_p^a(\beta_{s})\bmod p$ then $1-\mathfrak{D}_p^a(\beta_{s})+\mathfrak{D}_p^a(\gamma)\in p\mathbb{Z}_{(p)}$. This a contradiction to $(\textbf{P}5)$.  Consequently, the elements $\mathfrak{D}_p^a(\beta_{1})+t,\ldots,\mathfrak{D}_p^a(\beta_{n})+t,$ $\mathfrak{D}_p^a(\alpha_{1})+t,\ldots,\mathfrak{D}_p^a(\alpha_{n})+t$ belong to $\mathbb{Z}_{(p)}^*$. Thus,
$$\left(\underset{w\in\mathcal{P}_{\mathfrak{D}_p^{a-1}(\bm\beta),r}}\prod(\mathfrak{D}_p^{a}(\beta_w)+t)\right)\bigg/\left(\underset{w\in\mathcal{P}_{\mathfrak{D}_p^{a-1}(\bm\alpha),r}}\prod(\mathfrak{D}_p^{a}(\alpha_w)+t)\right)\in\mathbb{Z}_{(p)}^*.$$
Then, from Equation~\eqref{eq_7}, we have $v_p(\mathcal{Q}_{\bm\alpha_{a,r},\bm\beta_{a,r}}(t))=0$ because $v_p(\mathcal{Q}_{\mathfrak{D}_p^a(\bm\alpha),\mathfrak{D}_p^a(\bm\beta)}(t))=0$. So that, $t\in S_{\bm\alpha_{a,r},\bm\beta_{a,r},p}$.

\textbullet\quad Suppose that $t\bmod p\equiv1-\mathfrak{D}_p^a(\beta_{s})\bmod p$ with $s\in\mathcal{P}_{\mathfrak{D}_p^{a-1}(\bm\beta),r}.$ Then $\beta_{s,a,r}=\mathfrak{D}^a_p(\beta_s)+1$. We want to see that $t-1\in S_{\bm\alpha_{a,r},\bm\beta_{a,r},p}$. To this end, we first prove that $t-1\in E_{\bm\alpha_{a,r},\bm\beta_{a,r},p}$.

Assume for contradiction that $t=0$. Then, $1-\mathfrak{D}_p^a(\beta_s)\in p\mathbb{Z}_{(p)}$. Since, $\beta_n=1$ and, for all integers $m\geq1$, $\mathfrak{D}_p^m(1)=1$, it follows that $\mathfrak{D}_p^a(\beta_n)-\mathfrak{D}_p^a(\beta_s)\in p\mathbb{Z}_{(p)}$. Then, from $(\textbf{P}3)$, we obtain $\beta_n=\beta_s$. Thus, $1=\beta_s$. As $s\in\mathcal{P}_{\mathfrak{D}_p^{a-1}(\bm\beta),r}$ then, by definition of the set $\mathcal{P}_{\mathfrak{D}_p^{a-1}(\bm\beta),r}$, we have $(\mathfrak{D}_p^{a-1}(\beta_s))_{r}\in p\mathbb{Z}_{(p)}$. Thus, $(1)_{r}\in p\mathbb{Z}_{(p)}$. But, $(1)_{r}\notin p\mathbb{Z}_{(p)}$ because $r\in\{0,\ldots, p-1\}$.  Consequently, $t>0$. Thus, $t-1\in\{0,\ldots, p-1\}$. As $t-1\bmod p\equiv-\mathfrak{D}_p^a(\beta_{s})\bmod p$ and $\beta_{s,a,r}=\mathfrak{D}^a_p(\beta_s)+1$ then $t-1\in E_{\bm\alpha_{a,r},\bm\beta_{a,r},p}$. 

Now, we have the following equality
\begin{equation}\label{eq_8}
\mathcal{Q}_{\mathfrak{D}_p^a(\bm\alpha),\mathfrak{D}_p^a(\bm\beta)}(t)=\mathcal{Q}_{\bm\alpha_{a,r},\bm\beta_{a,r}}(t-1)\cdot\frac{\underset{w\in\mathcal{C}_{\mathfrak{D}_p^{a-1}(\bm\alpha),r}}\prod(\mathfrak{D}_p^{a}(\alpha_w)+t-1)}{\underset{w\in\mathcal{C}_{\mathfrak{D}_p^{a-1}(\bm\beta),r}}\prod(\mathfrak{D}_p^{a}(\beta_w)+t-1)}\cdot\frac{\underset{w\in\mathcal{P}_{\mathfrak{D}_p^{a-1}(\bm\alpha),r}}\prod\mathfrak{D}_p^{a}(\alpha_w)}{\underset{w\in\mathcal{P}_{\mathfrak{D}_p^{a-1}(\bm\beta),r}}\prod\mathfrak{D}_p^{a}(\beta_w)}.
\end{equation}
By $(\textbf{P}1)$, we know that $\mathfrak{D}_p^a(\alpha_{1}),\ldots,\mathfrak{D}_p^a(\alpha_{n}),$ $\mathfrak{D}_p^a(\beta_{1}),\ldots,\mathfrak{D}_p^a(\beta_{n})$ belong to $\mathbb{Z}_{(p)}^*$. Then,
$$\left(\underset{w\in\mathcal{P}_{\mathfrak{D}_p^{a-1}(\bm\alpha),r}}\prod\mathfrak{D}_p^{a}(\alpha_w)\right)\bigg/\left(\underset{w\in\mathcal{P}_{\mathfrak{D}_p^{a-1}(\bm\beta),r}}\prod\mathfrak{D}_p^{a}(\beta_w)\right)\in\mathbb{Z}_{(p)}^*.$$

Assume for contradiction that there is $\gamma\in\{\alpha_{1},\ldots,\alpha_{n}\}$ such that $\mathfrak{D}_p^a(\gamma)+t-1$ belongs to $p\mathbb{Z}_{(p)}$. Since $t-1\bmod p\equiv-\mathfrak{D}_p^a(\beta_{s})\bmod p$, it follows that $\mathfrak{D}_p^a(\gamma)-\mathfrak{D}_p^a(\beta_{s})\in p\mathbb{Z}_{(p)}$. That is a contradiction because, according to $(\textbf{P}2)$, $\mathfrak{D}_p^a(\gamma)-\mathfrak{D}_p^a(\beta_{s})\notin p\mathbb{Z}_{(p)}$. For this reason, the elements $\mathfrak{D}_p^a(\alpha_{1})+t-1,\ldots,\mathfrak{D}_p^a(\alpha_{n})+t-1$ belong to $\mathbb{Z}_{(p)}^*$.

Again, suppose, to derive a contradiction, that there is $w\in\mathcal{C}_{\mathfrak{D}_p^{a-1}(\bm\beta),r}$ such that $\mathfrak{D}_p^a(\beta_w)+t-1$ belongs to $p\mathbb{Z}_{(p)}$. Since $t-1\equiv-\mathfrak{D}_p^a(\beta_{s})\bmod p$, we obtain $\mathfrak{D}_p^a(\beta_w)-\mathfrak{D}_p^a(\beta_{s})\in p\mathbb{Z}_{(p)}$. Then, according to $(\textbf{P}3)$, we have $\beta_w=\beta_s$. On the one hand, we have $(\mathfrak{D}_p^{a-1}(\beta_s))_{r}\in p\mathbb{Z}_{(p)}$ because $s\in\mathcal{P}_{\mathfrak{D}_p^{a-1}(\bm\beta),r}$. On the other hand, $(\mathfrak{D}_p^{a-1}(\beta_s))_{r}\notin p\mathbb{Z}_{(p)}$  because $\beta_w=\beta_s$ and $w\in\mathcal{C}_{\mathfrak{D}_p^{a-1}(\bm\beta),r}$. So that, we have a contradiction. For this reason, for every $w\in\mathcal{C}_{\mathfrak{D}_p^{a-1}(\bm\beta),r}$, $\mathfrak{D}_p^a(\beta_{w})+t-1$ belongs to $\mathbb{Z}_{(p)}^*$.  Consequently, the element 
$$\left(\underset{w\in\mathcal{C}_{\mathfrak{D}_p^{a-1}(\bm\alpha),r}}\prod(\mathfrak{D}_p^{a}(\alpha_w)+t-1)\right)\bigg/\left(\underset{w\in\mathcal{C}_{\mathfrak{D}_p^{a-1}(\bm\beta),r}}\prod(\mathfrak{D}_p^{a}(\beta_w)+t-1)\right)\in\mathbb{Z}_{(p)}^*.$$ 
Thus, from Equation~\eqref{eq_8}, we have $v_p(\mathcal{Q}_{\bm\alpha_{a,r},\bm\beta_{a,r}}(t-1))=0$ because $v_p(\mathcal{Q}_{\mathfrak{D}_p^a(\bm\alpha),\mathfrak{D}_p^a(\bm\beta)}(t))=0$. So that, $t-1\in S_{\bm\alpha_{a,r},\bm\beta_{a,r},p}$.

Consequently, $\tau$ is well-defined and it is clear that $\tau$ is the inverse of $\sigma$. Therefore, $\sigma$ is a bijective map.

B) 
Let $(a,r)$ be in $\{1,\ldots, l\}\times\{0,\ldots,p-1\}$, and let $t$ be in $ S_{\bm\alpha_{a,r},\bm\beta_{a,r},p}$. We are going to see that $\mathcal{P}_{\bm\alpha_{a,r},t}=\mathcal{P}_{\mathfrak{D}_p^a(\bm\alpha),\sigma(t)}$. As $t\in S_{\bm\alpha_{a,r},\bm\beta_{a,r},p}$ then  $t\bmod p\equiv1-\beta_{s,a,r}$ for some $s\in\{1,\ldots,n\}$. 
 In particular, $$t\bmod p\equiv\left\{ \begin{array}{lcc}
             1-\mathfrak{D}_p^a(\beta_s)\bmod p &   if  &  s\in\mathcal{C}_{\mathfrak{D}_p^{a-1}(\bm\beta),r} \\
             \\  -\mathfrak{D}_p^a(\beta_s)\bmod p&  if & s\in\mathcal{P}_{\mathfrak{D}_p^{a-1}(\bm\beta),r}.  \\
             \end{array}
   \right.$$

\textbullet\quad Suppose that   $s\in\mathcal{C}_{\mathfrak{D}_p^{a-1}(\bm\beta),r}$. Then, $t\bmod p\equiv1-\mathfrak{D}_p^a(\beta_{s})\bmod p$ and $\sigma(t)=t$. We are going to see that $\mathcal{P}_{\bm\alpha_{a,r},t}\subset\mathcal{P}_{\mathfrak{D}_p^a(\bm\alpha),t}$. Let $w$ be in $\mathcal{P}_{\bm\alpha_{a,r},t}$. Then, $(\alpha_{w,a,r})_{t}\in p\mathbb{Z}_{(p)}$. If $w\in\mathcal{C}_{\mathfrak{D}_p^{a-1}(\bm\alpha),r}$ then $\alpha_{w,a,r}=\mathfrak{D}_p^a(\alpha_w)$. So, $(\mathfrak{D}_p^a(\alpha_w))_t\in p\mathbb{Z}_{(p)}$. Hence, $w\in\mathcal{P}_{\mathfrak{D}_p^a(\bm\alpha),t}$. Now, suppose that $w\in\mathcal{P}_{\mathfrak{D}_p^{a-1}(\bm\alpha),r}$. Then, $\alpha_{w,a,r}=1+\mathfrak{D}_p^a(\alpha_w)$. Thus, $(\mathfrak{D}_p^a(\alpha_{w})+1)_{t}\in p\mathbb{Z}_{(p)}$.  Suppose, towards a contradiction, that $(\mathfrak{D}_p^a(\alpha_{w}))_{t}\notin p\mathbb{Z}_{(p)}$. Since $(\mathfrak{D}_p^a(\alpha_{w})+1)_{t}\in p\mathbb{Z}_{(p)}$, we get $\mathfrak{D}_p^a(\alpha_w)+t\in p\mathbb{Z}_{(p)}$. As $0\leq t\leq p-1$ then $t=p\mathfrak{D}_p^{a+1}(\alpha_w)-\mathfrak{D}_p^a(\alpha_w)$. As $t\bmod p\equiv1-\mathfrak{D}_p^a(\beta_{s})\bmod p$ then $1-\mathfrak{D}_p^a(\beta_{s})+\mathfrak{D}_p^a(\alpha_w)\in p\mathbb{Z}_{(p)}$. This leads to a contradiction of $(\textbf{P}5)$. Whence, $(\mathfrak{D}_p^a(\alpha_{w}))_{t}\in p\mathbb{Z}_{(p)}$. For this reason, $w\in\mathcal{P}_{\mathfrak{D}_p^a(\bm\alpha),t}$. Consequently, we have $\mathcal{P}_{\bm\alpha_{a,r},t}\subset\mathcal{P}_{\mathfrak{D}_p^a(\bm\alpha),t}$.
 
Now, we show that $\mathcal{P}_{\mathfrak{D}_p^a(\bm\alpha),t}\subset\mathcal{P}_{\bm\alpha_{a,r},t}$. Let $w$ be in $\mathcal{P}_{\mathfrak{D}_p^a(\bm\alpha),t}$. Then, $(\mathfrak{D}_p^a(\alpha_w))_{t}\in p\mathbb{Z}_{(p)}$. If $w\in\mathcal{C}_{\mathfrak{D}_p^{a-1}(\bm\alpha),r}$ then $\alpha_{w,a,r}=\mathfrak{D}_p^a(\alpha_w)$. Thus, $w\in\mathcal{P}_{\bm\alpha_{a,r},t}$. Now, if $w\in\mathcal{P}_{\mathfrak{D}_p^{a-1}(\bm\alpha),r}$ then $\alpha_{w,a,r}=1+\mathfrak{D}_p^a(\alpha_w)$. We have $(\mathfrak{D}_p^a(\alpha_w))_{t}\in p\mathbb{Z}_{(p)}$ and from $(\textbf{P}1)$ we know that $\mathfrak{D}^a_p(\alpha_w)\in\mathbb{Z}_{(p)}^{*}$. Thus, $(\mathfrak{D}_p^a(\alpha_w)+1)_{t}\in p\mathbb{Z}_{(p)}$. Then, $w\in\mathcal{P}_{\bm\alpha_{a,r},t}$. Consequently, we have $\mathcal{P}_{\mathfrak{D}_p^a(\bm\alpha),t}\subset\mathcal{P}_{\bm\alpha_{a,r},t}$.  

Therefore, $\mathcal{P}_{\bm\alpha_{a,r},t}=\mathcal{P}_{\mathfrak{D}_p^a(\bm\alpha),t}$. But, remember that $\sigma(t)=t$. Whence, we obtain $\mathcal{P}_{\bm\alpha_{a,r},t}=\mathcal{P}_{\mathfrak{D}_p^a(\bm\alpha),\sigma(t)}$.

\textbullet\quad Suppose that  $s\in\mathcal{P}_{\mathfrak{D}_p^{a-1}(\bm\beta),r}$. Then $t\bmod p\equiv-\mathfrak{D}_p^a(\beta_{s})\bmod p$ and $\sigma(t)=t+1$. We are going to see that $\mathcal{P}_{\bm\alpha_{a,r},t}\subset\mathcal{P}_{\mathfrak{D}_p^a(\bm\alpha),t+1}$. Let $w$ be in $\mathcal{P}_{\bm\alpha_{a,r},t}$. Then $(\alpha_{w,a,r})_t\in p\mathbb{Z}_{(p)}$. Suppose that $w\in\mathcal{C}_{\mathfrak{D}_p^{a-1}(\bm\alpha),r}$. Then, $\alpha_{w,a,r}=\mathfrak{D}_p^a(\alpha_w)$. So that, $(\mathfrak{D}_p^a(\alpha_{w}))_{t}\in p\mathbb{Z}_{(p)}$. For this reason, $(\mathfrak{D}_p^a(\alpha_w))_{t+1}\in p\mathbb{Z}_{(p)}$. Therefore, $w\in\mathcal{P}_{\mathfrak{D}_p^a(\bm\alpha),t+1}$. Suppose now that $w\in\mathcal{P}_{\mathfrak{D}_p^{a-1}(\bm\alpha),r}$. Then, $\alpha_{w,a,r}=1+\mathfrak{D}_p^a(\alpha_w)$. So that, $(\mathfrak{D}_p^a(\alpha_{w})+1)_{t}\in p\mathbb{Z}_{(p)}$. Thus, $(\mathfrak{D}_p^a(\alpha_w))_{t+1}\in p\mathbb{Z}_{(p)}$. Hence, $w\in\mathcal{P}_{\mathfrak{D}_p^a(\bm\alpha),t+1}$. Consequently, $\mathcal{P}_{\bm\alpha_{a,r},t}\subset\mathcal{P}_{\mathfrak{D}_p^a(\bm\alpha),t+1}$.
  
Now, we show that  $\mathcal{P}_{\mathfrak{D}_p^a(\bm\alpha),t+1}\subset\mathcal{P}_{\bm\alpha_{a,r},t}$. Let $w$ be in $\mathcal{P}_{\mathfrak{D}_p^a(\bm\alpha),t+1}$. Then, $(\mathfrak{D}_p^a(\alpha_w))_{t+1}\in p\mathbb{Z}_{(p)}$. Suppose that $w\in\mathcal{C}_{\mathfrak{D}_p^{a-1}(\bm\alpha),r}$. Then, $\alpha_{w,a,r}=\mathfrak{D}_p^a(\alpha_w)$. Assume for contradiction that $(\mathfrak{D}_p^a(\alpha_w))_{t}\notin p\mathbb{Z}_{(p)}$. Thus, $\mathfrak{D}_p^a(\alpha_w)+t\in p\mathbb{Z}_{(p)}$. As $0\leq t\leq p-1$ then $t=p\mathfrak{D}_p^{a+1}(\alpha_w)-\mathfrak{D}_p^a(\alpha_w)$. Since $t\bmod p\equiv-\mathfrak{D}_p^a(\beta_{s})\bmod p$, it follows that $\mathfrak{D}_p^a(\alpha_w)-\mathfrak{D}_p^a(\beta_{s})\in p\mathbb{Z}_{(p)}$. This contradicts our condition $(\textbf{P}2)$. Then, we have $(\mathfrak{D}_p^a(\alpha_{w}))_{t}\in p\mathbb{Z}_{(p)}$. For this reason, $w\in\mathcal{P}_{\bm\alpha_{a,r},t}$. Now, suppose that $w\in\mathcal{P}_{\mathfrak{D}_p^{a-1}(\bm\alpha),r}$. Then, $\alpha_{w,a,r}=1+\mathfrak{D}_p^a(\alpha_w)$. By $(\textbf{P}1)$, we know that $\mathfrak{D}_p^a(\alpha_w)\in\mathbb{Z}_{(p)}^*$ and since $(\mathfrak{D}_p^a(\alpha_w))_{t+1}\in p\mathbb{Z}_{(p)}$, it follows that $(\mathfrak{D}_p^a(\alpha_w)+1)_{t}\in p\mathbb{Z}_{(p)}$. So that $w\in\mathcal{P}_{\bm\alpha_{a,r},t}$. Consequently, $\mathcal{P}_{\mathfrak{D}_p^a(\bm\alpha),t+1}\subset\mathcal{P}_{\bm\alpha_{a,r},t}$. 

Therefore,  $\mathcal{P}_{\bm\alpha_{a,r},t}=\mathcal{P}_{\mathfrak{D}_p^a(\bm\alpha), t+1}$. But, remember that $\sigma(t)=t+1$. Whence, we obtain $\mathcal{P}_{\bm\alpha_{a,r},t}=\mathcal{P}_{\mathfrak{D}_p^a(\bm\alpha),\sigma(t)}$.

Thus,  for every $t\in S_{\bm\alpha_{a,r},\bm\beta_{a,r},p}$, we have $\mathcal{P}_{\bm\alpha_{a,r},t}=\mathcal{P}_{\mathfrak{D}_p^a(\bm\alpha),\sigma(t)}$. One shows, in an exactly similar way  that, for every $t\in S_{\bm\alpha_{a,r},\bm\beta_{a,r},p}$, we have $\mathcal{P}_{\bm\beta_{a,r},t}=\mathcal{P}_{\mathfrak{D}_p^a(\bm\beta),\sigma(t)}$.

Finally, for every $t\in S_{\bm\alpha_{a,r},\bm\beta_{a,r},p}$, we also have $\mathcal{C}_{\bm\alpha_{a,r},t}=\mathcal{C}_{\mathfrak{D}_p^a(\bm\alpha),\sigma(t)}$ because $\mathcal{C}_{\bm\alpha_{a,r},t}$ is the complement of $\mathcal{P}_{\bm\alpha_{a,r},t}$ in $\{1,\ldots, n\}$, $\mathcal{C}_{\mathfrak{D}_p^a(\bm\alpha),\sigma(t)}$ is the complement of $\mathcal{P}_{\mathfrak{D}_p^a(\bm\alpha),\sigma(t)}$ in $\{1,\ldots,n\}$, and we have already seen that $\mathcal{P}_{\bm\alpha_{a,r},t}=\mathcal{P}_{\mathfrak{D}_p^a(\bm\alpha),\sigma(t)}$. In a similar way one has $\mathcal{C}_{\bm\beta_{a,r},t}=\mathcal{C}_{\mathfrak{D}_p^a(\bm\beta),\sigma(t)}$.

$\hfill\square$
 \section{Proof of Lemma~\ref{lemm_second_step}}\label{sec_proof_lemm_second_step}

Lemma~\ref{lemm_second_step} is obtained from the following lemma

\begin{lemm}\label{lemm_first_step}
Let $\bm\alpha=(\alpha_1,\ldots,\alpha_n)$, $\bm\beta=(\beta_1,\ldots,\beta_{n-1},1)$ be in  $(\mathbb{Q}\setminus\mathbb{Z}_{\leq0})^n$ and let $p$ be a prime number  such that $p>d_{\bm\alpha,\bm\beta}$  and $f(z):={}_nF_{n-1}(\bm\alpha,\bm\beta;z)$ belongs to $\mathbb{Z}_{(p)}[[z]]$. Suppose that $(\bm\alpha, \bm\beta)$ satisfies the $\textbf{P}_{p,l}$ property, where $l$ is the order of $p$ in $(\mathbb{Z}/d_{\bm\alpha,\bm\beta}\mathbb{Z})^*$. Then, for each $r\in S_{\bm\alpha,\bm\beta,p}$, $f_{1,r}\in1+z\mathbb{Z}_{(p)}[[z]]$ and $$f(z) \equiv\sum_{r\in S_{\bm\alpha,\bm\beta,p}} Q_{r}f_{1,r}^{p}\bmod p\quad\text{ with }\quad Q_r(z)=\sum_{s=r}^{r'-1}\mathcal{Q}_{\bm\alpha,\bm\beta}(s)z^s,$$ where $r'$ is defined as follows. If $r\neq\max  E_{\bm\alpha,\bm\beta,p}$ then $r'$ is the element in $ E_{\bm\alpha,\bm\beta,p}$ such that $r<r'$ and $(r,r')\cap  E_{\bm\alpha,\bm\beta,p}=\emptyset$ or otherwise, $r'=p$. 

\end{lemm}

\begin{proof}[Proof of Lemma~\ref{lemm_second_step}]
We proceed by induction on $a\in\{1,\ldots,l\}$. Suppose $a=1$. By assumption,  $(\bm\alpha,\bm\beta)$ satisfies the $\textbf{P}_{p,l}$ property and $f(z)\in\mathbb{Z}_{(p)}[[z]]$. Thus, the hypotheses of Lemma~\ref{lemm_first_step} are satisfied and we conclude that
$$f(z) \equiv\sum_{r\in S_{\bm\alpha,\bm\beta,p}} Q_{r}f_{r}^{p}\bmod p,$$
where, each $Q_{r}(z)$ belongs to $\mathbb{Z}_{(p)}$ and has degree less than $p$, and $f_{r}(z)\in1+z\mathbb{Z}_{(p)}[[z]]$. 
We now suppose that the conclusion of our lemma is true for some $a$ in $\{1,\ldots, l-1\}$. We are going to prove that it is also true for $a+1$. By induction hypothesis, we have 
\begin{equation}\label{eq_induc_hy}
f\equiv\sum_{r\in S_{\mathfrak{D}_p^{a-1}(\bm\alpha),\mathfrak{D}_p^{a-1}(\bm\beta),p}} Q_{a,r}(z)f_{a,r}^{p^a}\bmod p
\end{equation}
where, for every $r\in S_{\mathfrak{D}_p^{a-1}(\bm\alpha),\mathfrak{D}_p^{a-1}(\bm\beta),p}$, $Q_{a,r}(z)$ belongs to $\mathbb{Z}_{(p)}[z]$ and has degree less than $p^a$ and $f_{a,r}\in1+z\mathbb{Z}_{(p)}[[z]]$. 
  
We fix $r$ in $S_{\mathfrak{D}_p^{a-1}(\bm\alpha),\mathfrak{D}_p^{a-1}(\bm\beta),p}$. By definition, $f_{a,r}(z)={}_nF_{n-1}(\bm\alpha_{a,r},\bm\beta_{a,r}; z)$. We would like to apply Lemma~\ref{lemm_first_step} to $f_{a,r}(z)$. To this end, we are going to see that the hypotheses of Lemma \ref{lemm_first_step} are satisfied. By induction hypothesis, we know that $f_{a,r}(z)\in\mathbb{Z}_{(p)}[[z]]$ and thanks to $(\textbf{P}1)$, $\bm\alpha_{a,r},\bm\beta_{a,r}$ belong to $(\mathbb{Z}_{(p)}\setminus\mathbb{Z}_{\leq 0})^n$. According to Remark~\ref{rema_d_p_h},  $(\bm\alpha_{a,r},\bm\beta_{a,r})$ satisfies the $\textbf{P}_{p,l'}$ property, where $l'$ is the order of $p$ in $(\mathbb{Z}/d_{\bm\alpha_{a,r},\bm\beta_{a,r}}\mathbb{Z})^*$. We can then apply Lemma~\ref{lemm_first_step} to $f_{a,r}$ and we obtain
 \begin{equation}\label{eq_f_i_a}
 f_{a,r}(z)\equiv\sum_{\mu\in S_{\bm\alpha_{a,r},\bm\beta_{a,r},p}} P_{\mu}(z)g_{\mu}^p\bmod p,
 \end{equation}
where, each $P_{\mu}(z)\in\mathbb{Z}_{(p)}[z]$ has degree less than  $p$, and $$g_{\mu}(z)=\sum_{m\geq0}\left(\frac{\underset{s\in\mathcal{C}_{\bm\alpha_{a,r},\mu}}{\prod}\mathfrak{D}_p(\alpha_{s,a,r})_m {\underset{s\in\mathcal{P}_{\bm\alpha_{a,r},\mu}}{\prod}(\mathfrak{D}_p(\alpha_{s,a,r})+1)_m}}{\underset{s\in\mathcal{C}_{\bm\beta_{a,r},\mu}}{\prod}\mathfrak{D}_p(\beta_{s,a,r})_m \underset{s\in\mathcal{P}_{\bm\beta_{a,r},\mu}}{\prod}(\mathfrak{D}_p(\beta_{s,a,r})+1)_m}\right)z^m\in1+z\mathbb{Z}_{(p)}[[z]].$$
By (2) of Remark~\ref{rema_d_p_1}, we know that $\mathfrak{D}_p(\bm\alpha_{a,r})=\mathfrak{D}_p^{a+1}(\bm\alpha)$ and that $\mathfrak{D}_p(\bm\beta_{a,r})=\mathfrak{D}_p^{a+1}(\bm\beta)$. Hence, it follows that, for all $\mu\in S_{\bm\alpha_{a,r},\bm\beta_{a,r},p}$, $$g_{\mu}(z)=\sum_{m\geq0}\left(\frac{\underset{s\in\mathcal{C}_{\bm\alpha_{a,r},\mu}}{\prod}\mathfrak{D}_p^{a+1}(\alpha_{s})_m {\underset{s\in\mathcal{P}_{\bm\alpha_{a,r},\mu}}{\prod}(\mathfrak{D}_p^{a+1}(\alpha_{s})+1)_m}}{\underset{s\in\mathcal{C}_{\bm\beta_{a,r},\mu}}{\prod}\mathfrak{D}_p^{a+1}(\beta_{s})_m \underset{s\in\mathcal{P}_{\bm\beta_{a,r},\mu}}{\prod}(\mathfrak{D}_p^{a+1}(\beta_{s})+1)_m}\right)z^m.$$ 
We now want to see that $g_{\mu}(z)=f_{a+1,\sigma(\mu)}$ for all $\mu\in S_{\bm\alpha_{a,r},\bm\beta_{a,r},p}$, where  $\sigma:S_{\bm\alpha_{a,r},\bm\beta_{a,r},p}\rightarrow S_{\mathfrak{D}_p^{a}(\bm\alpha),\mathfrak{D}_p^{a}(\bm\beta),p}$ is the bijective map given by Lemma~\ref{lemm_aux}. By definition, we have $f_{a+1,\gamma}={}_nF_{n-1}(\bm\alpha_{a+1,\gamma},\bm\beta_{a+1,\gamma};z)$ for all $\gamma\in S_{\mathfrak{D}_p^{a}(\bm\alpha),\mathfrak{D}_p^{a}(\bm\beta),p}$. Thus, $$f_{a+1,\gamma}=\sum_{m\geq0}\left(\frac{\underset{s\in\mathcal{C}_{\mathfrak{D}_p^a(\bm\alpha),\gamma}}{\prod}\mathfrak{D}_p^{a+1}(\alpha_s)_m {\underset{s\in\mathcal{P}_{\mathfrak{D}_p^a(\bm\alpha),\gamma}}{\prod}(\mathfrak{D}_p^{a+1}(\alpha_s)+1)_m}}{\underset{s\in\mathcal{C}_{\mathfrak{D}_p^a(\bm\beta),\gamma}}{\prod}\mathfrak{D}_p^{a+1}(\beta_s)_m \underset{s\in\mathcal{P}_{\mathfrak{D}_p^a(\bm\beta),\gamma}}{\prod}(\mathfrak{D}_p^{a+1}(\beta_s)+1)_m}\right)z^m.$$

By invoking B) of Lemma~\ref{lemm_aux}, we obtain, for every $\mu\in S_{\bm\alpha_{a,r},\bm\beta_{a,r},p}$, the following equalities $\mathcal{P}_{\bm\alpha_{a,r},\mu}=\mathcal{P}_{\mathfrak{D}_p^a(\bm\alpha),\sigma(\mu)}$,  $\mathcal{C}_{\bm\alpha_{a,r},\mu}=\mathcal{C}_{\mathfrak{D}_p^a(\bm\alpha),\sigma(\mu)}$, $\mathcal{P}_{\bm\beta_{a,r},\mu}=\mathcal{P}_{\mathfrak{D}_p^a(\bm\beta),\sigma(\mu)}$, and  $\mathcal{C}_{\bm\beta_{a,r},\mu}=\mathcal{C}_{\mathfrak{D}_p^a(\bm\beta),\sigma(\mu)}$. For this reason, $g_{\mu}(z)=f_{a+1,\sigma(\mu)}$. By A) of Lemma~\ref{lemm_aux}, we know that $\sigma:S_{\bm\alpha_{a,r},\bm\beta_{a,r},p}\rightarrow S_{\mathfrak{D}_p^{a}(\bm\alpha),\mathfrak{D}_p^{a}(\bm\beta),p}$ is bijective. Then, from \eqref{eq_f_i_a}, we infer that
 \begin{equation}\label{eq_new_f_i_a}
 f_{a,r}(z)\equiv\sum_{\gamma\in S_{\mathfrak{D}_p^{a}(\bm\alpha),\mathfrak{D}_p^{a}(\bm\beta),p}} T_{r,\gamma}(z)f_{a+1,\gamma}^p\bmod p
 \end{equation}
 where  $T_{r,\gamma}(z)\in\mathbb{Z}_{(p)}[z]$ with degree less than  $p$. As $r$ in $S_{\mathfrak{D}_p^{a-1}(\bm\alpha),\mathfrak{D}_p^{a-1}(\bm\beta),p}$ is an arbitrary element, then, from Equalities \eqref{eq_induc_hy} and \eqref{eq_new_f_i_a}, we obtain $$f(z)\equiv\sum_{\gamma\in S_{\mathfrak{D}_p^{a}(\bm\alpha),\mathfrak{D}_p^{a}(\bm\beta),p}}Q_{a+1,\gamma}f_{a+1,\gamma}^{p^{a+1}}\bmod p,\text{ with } Q_{a+1,\gamma}=\sum_{r\in S_{\mathfrak{D}_p^{a-1}(\bm\alpha),\mathfrak{D}_p^{a-1}(\bm\beta),p}} Q_{a,r}(z)T_{r,\gamma}^{p^a}.$$
 For every $\gamma\in S_{\mathfrak{D}_p^{a}(\bm\alpha),\mathfrak{D}_p^{a}(\bm\beta),p}$, the polynomial $Q_{a+1,\gamma}$ has degree less than $p^{a+1}$ because, for every $r\in S_{\mathfrak{D}_p^{a-1}(\bm\alpha),\mathfrak{D}_p^{a-1}(\bm\beta),p}$, $Q_{a,r}\in\mathbb{Z}_{(p)}[z]$ has degree less than $p^a$ and $T_{r,\gamma}\in\mathbb{Z}_{(p)}[z]$ has degree less than $p$.  This completes the proof.
\end{proof}

The remainder of this section is devoted to proving Lemma~\ref{lemm_first_step}.

\subsection{Cartier operators}

The proof of the Lemma~\ref{lemm_first_step} depends essentially on Lemma~\ref{lemm_cartier_operator}. In preparation for stating Lemma  \ref{lemm_cartier_operator}, we recall the definition of Cartier operators over the ring $\mathbb{F}_p[[z]]$. For each $r\in\{0,\ldots, p-1\}$, we have the $\mathbb{F}_p$-linear operator $\Lambda_r:\mathbb{F}_p[[z]]\rightarrow\mathbb{F}_p[[z]]$ given by $$\Lambda_r\left(\sum_{j\geq0}a(n)z^j\right)=\sum_{j\geq0}a(jp+r)z^j.$$
The operators $\Lambda_0,\ldots,\Lambda_{p-1}$ are called the \emph{Cartiers Operators}\footnote{We refer the reader to \cite[Section 2]{AY}, where the authors explain why these operators are referred to as the Cartiers Operators.}.

\begin{lemm}\label{lemm_cartier_operator}
 Let the assumptions be as in Lemma~\ref{lemm_first_step}. Then $$f(z)\equiv\sum_{r\in S_{\bm\alpha,\bm\beta,p}} P_r(z)\Lambda_r(f)^p\bmod p\quad\text{ with }\quad P_r(z)=\sum_{s=r}^{r'-1}\frac{\mathcal{Q}_{\bm\alpha,\bm\beta}(s)}{\mathcal{Q}_{\bm\alpha,\bm\beta}(r)}z^s,$$
where $r'$ is defined as follows. If $r\neq\max  E_{\bm\alpha,\bm\beta,p}$ then $r'$ is the element in $ E_{\bm\alpha,\bm\beta,p}$ such that $r<r'$ and $(r,r')\cap  E_{\bm\alpha,\bm\beta,p}=\emptyset$ or  otherwise, $r'=p$. 

\end{lemm}

\subsection{Auxiliary result I}\label{sec_aux_1}
In order to prove Lemma~\ref{lemm_cartier_operator}, we need the next auxiliary results which we state and prove. These auxiliary results deal with $p$-adic properties of the sequence $\{\mathcal{Q}_{\bm\alpha,\bm\beta}(i)\}_{i\geq0}$. The main result is Lemma~\ref{lemm_padic_valuation}.

\begin{lemm}\label{lemm_lambda_0}
Let $p$ be a prime number and let $\bm\alpha=(\alpha_1,\ldots,\alpha_n)$, $\bm\beta=(\beta_1,\ldots,\beta_{n-1},1)$ be in $(\mathbb{Z}_{(p)}\setminus\mathbb{Z}_{\leq0})^n$. Then, for all integers $j\geq0$,
$$\mathcal{Q}_{\bm\alpha,\bm\beta}(jp)=\mathcal{Q}_{\mathfrak{D}_p(\bm\alpha),\mathfrak{D}_p(\bm\beta)}(j)\omega,$$

where $\omega\in\mathbb{Z}_{(p)}^*$ and  $\omega\equiv1\bmod p$.

\end{lemm}

\begin{proof}
If $j=0$ then there is nothing to prove. So, we suppose that $j>0$. Let $\gamma$ be in $\{\alpha_1,\ldots,\alpha_n,\beta_1,\ldots,\beta_n\}$ and let $r$ be the unique integer in $\{0,1,\ldots, p-1\}$ such that $p\mathfrak{D}_{p}(\gamma)-\gamma=r$. It is clear that $$(\gamma)_{jp}=\prod_{t=0}^{p-1}(\gamma+t)\prod_{t=0}^{p-1}(\gamma+p+t)\cdots\prod_{t=0}^{p-1}(\gamma+(j-1)p+t).$$
Note that, for all nonnegative integers $s$, $\mathfrak{D}_p(\gamma+sp)=\mathfrak{D}_p(\gamma)+s$ because $p(\mathfrak{D}_p(\gamma)+s)-(\gamma+sp)=r$. Then 
\begin{align*}
(\gamma)_{jp}=\prod_{s=0}^{j-1}\left(p(\mathfrak{D}_p(\gamma)+s)\underset{t\neq r}{\prod_{t=0}^{p-1}}(\gamma+sp+t)\right)&=p^j\prod_{s=0}^{j-1}(\mathfrak{D}_p({\gamma})+s)\prod_{s=0}^{j-1}\left(\underset{t\neq r}{\prod_{t=0}^{p-1}}(\gamma+sp+t)\right)\\
&=p^j(\mathfrak{D}_p(\gamma))_j\prod_{s=0}^{j-1}\left(\underset{t\neq r}{\prod_{t=0}^{p-1}}(\gamma+sp+t)\right).
\end{align*}
By Wilson's Theorem, it follows that, for all nonnegative integers $s$, $$\underset{t\neq r}{\prod_{t=0}^{p-1}}(\gamma+sp+t)\equiv(p-1)!\equiv-1\bmod p.$$
Therefore, 
\begin{equation}\label{eq_03} 
(\gamma)_{jp}=p^j(\mathfrak{D}_p(\gamma))_j\lambda,
\end{equation}
where $\lambda\in\mathbb{Z}_{(p)}^*$ and $\lambda\equiv(-1)^j\bmod p$.\\
Since $\gamma$ is an arbitrary element in $\{\alpha_1,\ldots,\alpha_n,\beta_1,\ldots,\beta_n\}$ and $$\mathcal{Q}_{\bm\alpha,\bm\beta}(jp)=\frac{(\alpha_1)_{jp}\cdots (\alpha_n)_{jp}}{(\beta_1)_{jp}\cdots(\beta_{n-1})_{jp}(1)_{jp}},$$ it follows from Equation~\eqref{eq_03} that 
\begin{equation}\label{eq_04}
\mathcal{Q}_{\bm\alpha,\bm\beta}(jp)=\mathcal{Q}_{\mathfrak{D}_p(\bm\alpha),\mathfrak{D}_p(\bm\beta)}(j)\omega,
\end{equation}
where $\omega\in\mathbb{Z}_{(p)}^*$ and $\omega\equiv1\bmod p$. \\

\end{proof}

\begin{lemma}\label{lemm_form}
Let $p$ be a prime number, let $\alpha$ be in $\mathbb{Z}_{(p)}$ and let $n\geq1$ be an integer. If $n=r_0+r_1p+\cdots+r_sp^s$ is the $p$-adic expansion of $n$ then $$(\alpha)_n \in p^{n_1}(\mathfrak{D}_p(\alpha))_{n_1}(\alpha+n_1p)_{r_0}\mathbb{Z}_{(p)}^*,$$
where $n_1=r_1+r_2p+\cdots+r_sp^{s-1}$.
\end{lemma}

\begin{proof}
It is not hard to see that $(\alpha)_n=\prod_{k=0}^{n_1-1}(\alpha+kp)_p\cdot(\alpha+n_1p)_{r_0}$. We know that there is a unique $r\in\{0,\ldots,p-1\}$ such that $\alpha+r=p\mathfrak{D}_p(\alpha)$. Then, for all integers $m\geq1$, $\alpha+mp+r=p(\mathfrak{D}_p(\alpha)+m)$. Hence, for all $m\geq1$, $$(\alpha+mp)_p=p(\mathfrak{D}_p(\alpha)+m)\underset{i\neq r}{\prod_{i=0}^{p-1}}(\alpha+mp+i).$$
But, for all $0\leq i<p$ such that $i\neq r$, $(\alpha+mp+i)\in\mathbb{Z}_{(p)}^*$ because $r$ is the unique element in  $\{0,\ldots,p-1\}$ such that $\alpha+r\in p\mathbb{Z}_{(p)}$. So, we conclude that  $$(\alpha)_n \in p^{n_1}(\mathfrak{D}_p(\alpha))_{n_1}(\alpha+n_1p)_{r_0}\mathbb{Z}_{(p)}^*.$$
\end{proof}

\begin{lemm}\label{lemm_padic_valuation}
Let the assumptions be as in Lemma~\ref{lemm_first_step}. If $v_p(\mathcal{Q}_{\bm\alpha,\bm\beta}(r))>0$ then, for every  $j\in\mathbb{N}$, $v_p(\mathcal{Q}_{\bm\alpha,\bm\beta}(jp+r))>0.$
\end{lemm}

\begin{proof}
We split the proof into five steps.

\textbf{ Step I:}  We will prove that, for all integers $a\geq1$, $\mathfrak{D}_p^a(\alpha_i)-\mathfrak{D}_p^a(\beta_j)\in\mathbb{Z}_{(p)}^*$. Since $(\bm\alpha, \bm\beta)$ satisfies the $\textbf{P}_{p,l}$ propery, we know that, for all $1\leq m\leq l$, $\mathfrak{D}_p^m(\alpha_i)-\mathfrak{D}_p^m(\beta_j)\in\mathbb{Z}_{(p)}^*$ for all $1\leq i,j\leq n$. Therefore, it is sufficient to prove that, for all $a\geq1$,  $\mathfrak{D}^a_p(\bm\alpha)=\mathfrak{D}_p^q(\bm\alpha)$ and $\mathfrak{D}^a_p(\bm\beta)=\mathfrak{D}_p^q(\bm\beta)$ where $q=a\bmod l$ with $1\leq q< l$ if $a\neq0\bmod l$, and $q=l$ if $a=0\bmod l$. From the definition of $\mathfrak{D}_p$, it is not hard to see that, for all $a\geq1$, $d_{\mathfrak{D}_p^a(\bm\alpha),\mathfrak{D}_p^a(\bm\beta)}$ divides $d_{\bm\alpha,\bm\beta}$. Consequently,  $p^l\equiv 1\bmod d_{\mathfrak{D}_p^a(\bm\alpha),\mathfrak{D}_p^a(\bm\beta)}$ for all $a\geq1$. Further, for all $1\leq m\leq l$, $\mathfrak{D}^m_p(\bm\alpha)$, $\mathfrak{D}^m_p(\bm\beta)\in(\mathbb{Z}_{(p)}^*\cap(0,1])^n$ because, by assumption, $(\bm\alpha,\bm\beta)$ satisfies $(\textbf{P}1)$. So, by Lemma~\ref{cyclic}, we conclude that, for all $t\geq1$ and $1\leq m\leq l$, $\mathfrak{D}_p^{lt}(\mathfrak{D}_p^m(\bm\alpha))=\mathfrak{D}_p^m(\bm\alpha)$ and $\mathfrak{D}_p^{lt}(\mathfrak{D}_p^m(\bm\beta))=\mathfrak{D}_p^m(\bm\beta)$. Consequently, if $a=q+lt$ with $0\leq q<l$ and $q\neq0$ then $\mathfrak{D}_p^a(\bm\alpha)=\mathfrak{D}^q_p(\bm\alpha)$,  $\mathfrak{D}_p^a(\bm\beta)=\mathfrak{D}^q_p(\bm\beta)$ and if $q=0$, $\mathfrak{D}^a_p(\bm\alpha)=\mathfrak{D}_p^{(t-1)l}((\mathfrak{D}^l_p(\bm\alpha))=\mathfrak{D}^l_p(\bm\alpha)$, $\mathfrak{D}^a_p(\bm\beta)=\mathfrak{D}_p^{(t-1)l}((\mathfrak{D}^l_p(\bm\beta))=\mathfrak{D}^l_p(\bm\beta)$.

\textbf{Step II}  Let $(a,b)$ be in $\mathbb{Z}_{\geq0}\times\{0,\ldots, p\}$ and let $\gamma\in\{\alpha_1,\ldots,\alpha_n,\beta_1,\ldots,\beta_{n-1}\}$. If $v_p(\mathfrak{D}_p^a(\gamma)_{b})\geq1$ then $v_p(\mathfrak{D}_p^a(\gamma)_{b})=1$. In fact, we know that there is $c<b$ such that $\mathfrak{D}_p^a(\gamma)+c\in p\mathbb{Z}_{(p)}$ because $v_p(\mathfrak{D}_p^a(\gamma)_{b})\geq1$. So, $\mathfrak{D}_p^a(\gamma)+c=p\mathfrak{D}_p^{a+1}(\gamma)$ given that $c<b\leq p$. Therefore, $$\mathfrak{D}_p^a(\gamma)_{b}=p\mathfrak{D}_p^{a+1}(\gamma)\prod_{t=0,t\neq c}^{b-1}(\mathfrak{D}_p^a(\gamma)+t).$$ But the $p$-adic valuation of $\prod_{t=0,t\neq c}^{b-1}(\mathfrak{D}_p^a(\gamma)+t)$ is zero because $c$ is the unique element in $\{0,\ldots, p-1\}$ such that $\mathfrak{D}_p^a(\gamma)+c\in p\mathbb{Z}_{(p)}$. Hence, $v_p(\mathfrak{D}_p^a(\gamma)_{b})=1+v_p(\mathfrak{D}_p^{a+1}(\gamma))$. Now, we show that $v_p(\mathfrak{D}_p^{a+1}(\gamma))=0$. From the definition of $\mathfrak{D}_p$, it is clear that for all $t\geq1$, $d_{\mathfrak{D}_p^t(\bm\alpha),\mathfrak{D}_p^t(\bm\beta)}$ divides $d_{\bm\alpha,\bm\beta}$. Since, by assumption, $p>d_{\bm\alpha,\bm\beta}$, we get that, for  all $t\geq1$, $p>d_{\mathfrak{D}_p^t(\bm\alpha),\mathfrak{D}_p^t(\bm\beta)}$. Further, $\mathfrak{D}_p(\gamma)\in\mathbb{Z}_{(p)}^*\cap(0,1]$ because $(\bm\alpha, \bm\beta)$ satisfies the $\textbf{P}_{p,l}$ property. Thus, by (3) of Proposition~\ref{prop_d_p}, we deduce that, for all $t\geq1$, $\mathfrak{D}_p^t(\gamma)\in\mathbb{Z}_{(p)}^*$. In particular, $v_p(\mathfrak{D}_p^{a+1}(\gamma))=0$. Hence, $v_p(\mathfrak{D}_p^a(\gamma)_{b})=1$.

\textbf{Step III:} We now prove that, for all $(a,b)\in\mathbb{Z}_{\geq0}\times\{0,\ldots, p\}$, $$v_p(\mathcal{Q}_{\mathfrak{D}_p^a(\bm\alpha),\mathfrak{D}_p^a(\bm\beta)}(b))=\#\mathcal{P}_{\mathfrak{D}_p^a(\bm\alpha),b}-\#\mathcal{P}_{\mathfrak{D}_p^a(\bm\beta),b}.$$
To this end, it is sufficient to show that, for all $(a,b)\in\mathbb{Z}_{\geq0}\times\{0,\ldots, p\}$, $$v_p(\mathfrak{D}_p^a(\beta_1)_{b}\cdots\mathfrak{D}_p^a(\beta_n)_{b})=\#\mathcal{P}_{\mathfrak{D}_p^a(\bm\beta),b}\quad\text{ and }\quad v_p(\mathfrak{D}_p^a(\alpha_1)_{b}\cdots\mathfrak{D}_p^a(\alpha_n)_{b})=\#\mathcal{P}_{\mathfrak{D}_p^a(\bm\alpha),b}.$$ Let $i$ be in $\{0,\ldots,n\}$ such that $v_p(\mathfrak{D}_p^a(\beta_i)_{b})\geq1$. Then, according to Step II, $v_p(\mathfrak{D}_p^a(\beta_i)_{b})=1$.  Therefore, we get $v_p(\mathfrak{D}_p^a(\beta_1)_{r}\cdots\mathfrak{D}_p^a(\beta_n)_{b})=\#\mathcal{P}_{\mathfrak{D}_p^a(\bm\beta),b}$. Similarly, we also have the equality $v_p(\mathfrak{D}_p^a(\alpha_1)_{b}\cdots\mathfrak{D}_p^a(\alpha_n)_{b})=\#\mathcal{P}_{\mathfrak{D}_p^a(\bm\alpha),b}$. 

\textbf{Step IV:} In this step we prove that, for all $(a,b)\in\mathbb{Z}_{\geq0}\times\{0,\ldots, p\}$, $\#\mathcal{P}_{\mathfrak{D}_p^a(\bm\beta),b}\leq \#\mathcal{P}_{\mathfrak{D}_p^a(\bm\alpha),b}$. By assumption ${}_nF_{n-1}(\bm\alpha,\bm\beta)\in\mathbb{Z}_{(p)}[[z]]$. So, by Lemma~\ref{lemm_lambda_0}, we deduce that, for all $(a,b)\in\mathbb{Z}_{\geq0}\times\{0,\ldots, p\}$, $\mathcal{Q}_{\mathfrak{D}_p^a(\bm\alpha),\mathfrak{D}_p^a(\bm\beta)}(b)\in\mathbb{Z}_{(p)}$. Thus, from Step III, we conclude that $\#\mathcal{P}_{\mathfrak{D}_p^a(\bm\beta),b}\leq \#\mathcal{P}_{\mathfrak{D}_p^a(\bm\alpha),b}$. As a consequence, we get that, for any $(a,b)\in\mathbb{Z}_{\geq0}\times\{0,\ldots, p\}$, there is an injective map $\xi_{a,b}:\mathcal{P}_{\mathfrak{D}_p^a(\bm\beta),b}\rightarrow\mathcal{P}_{\mathfrak{D}_p^a(\bm\alpha),b}$. So, without losing any generality, we assume $\mathcal{P}_{\mathfrak{D}_p^a(\bm\beta),b}\subset\mathcal{P}_{\mathfrak{D}_p^a(\bm\alpha),b}$.

\textbf{Step V} Let $j\geq1$ be an integer. We now prove that  $v_p(\mathcal{Q}_{\bm\alpha,\bm\beta}(jp+r))>0$. The $p$-adic expansion of $jp+r$ is of the form $j_0+j_1p+\cdots +j_kp^k$, where $j_0=r$ and $j_s\in\{0,\ldots, p-1\}$ for all $1\leq s\leq k$. For all $0\leq s< k$, we set $\tau_s=j_{s+1}p+\cdots+j_{k}p^{k-s}$ and $\tau_k=0$. From Lemma~\ref{lemm_form}, we deduce that $$\mathcal{Q}_{\bm\alpha,\bm\beta}(jp+r)\in\Lambda\mathbb{Z}_{(p)}^{*}\quad\text{ with }\quad\Lambda=\prod_{s=0}^k\frac{(\mathfrak{D}_p^s(\alpha_1)+\tau_s))_{j_{s}}\cdots(\mathfrak{D}_p^s(\alpha_n)+\tau_s))_{j_{s}}}{(\mathfrak{D}_p^s(\beta_1)+\tau_s))_{j_{s}}\cdots(\mathfrak{D}_p^s(\beta_n)+\tau_s))_{j_{s}}}.$$
Thus, in order to show that $v_p(\mathcal{Q}_{\bm\alpha,\bm\beta}(jp+r))>0$, it is sufficient to see that $v_p(\Lambda)>0$. 

For every $s$ in $\{0,\ldots,k\}$, we put $\mathcal{J}_s=\{i\in\{0,\ldots,n\}:v_p((\mathfrak{D}_p^s(\beta_i)+\tau_s))_{j_{s}})\geq1\}$ and $\mathcal{I}_s=\{i\in\{0,\ldots,n\}:v_p((\mathfrak{D}_p^s(\alpha_i)+\tau_s))_{j_{s}})\geq1\}$. Actually, $\mathcal{J}_s=\mathcal{P}_{\mathfrak{D}_p^s(\bm\beta),j_s}$. In fact, if $i\in\mathcal{J}_s$ then there exists $k_s<j_s$ such that $\mathfrak{D}_p^s(\beta_i)+\tau_s+k_s\in p\mathbb{Z}_p$. Since $\tau_s\in p\mathbb{Z}_{(p)}$, we have $\mathfrak{D}_p^s(\beta_i)+k_s\in\mathbb{Z}_{(p)}$. Thus, $i\in\mathcal{P}_{\mathfrak{D}_p^s(\bm\beta),j_s}$. So $\mathcal{J}_s\subset\mathcal{P}_{\mathfrak{D}_p^s(\bm\beta),j_s}$. In a similar way, one obtains $\mathcal{P}_{\mathfrak{D}_p^s(\bm\beta),j_s}\subset\mathcal{J}_s$. Similarly, one shows that $\mathcal{I}_s=\mathcal{P}_{\mathfrak{D}_p^s(\bm\alpha),j_s}$. Thus, from Step IV, $\mathcal{J}_s\subset\mathcal{I}_s$. In addition, $\mathcal{J}_s=\mathcal{J}_{s,1}\cup\mathcal{J}_{s,>1}$, where $\mathcal{J}_{s,1}$ is the set of $i\in\mathcal{J}_s$ such that $v_p((\mathfrak{D}_p^s(\beta_i)+\tau_s))_{j_{s}})=1$ and $\mathcal{J}_{s,>1}$ is the complement of $\mathcal{J}_{s,1}$ in $\mathcal{J}_{s}$. 

It is easily checked that $\Lambda=\Psi\Theta,$ where 
\begin{align*}
\Psi=&\underset{\mathcal{J}_{s,>1}=\emptyset}{\prod\limits_{s=0}\limits^{k}}\frac{\underset{i\in\mathcal{J}_s}{\prod}\left(\mathfrak{D}_p^s(\alpha_i)+\tau_s\right)_{j_s}}{\underset{i\in\mathcal{J}_s}{\prod}\left(\mathfrak{D}_p^s(\beta_i)+\tau_s\right)_{j_s}}\cdot\frac{\underset{i\notin\mathcal{I}_s}{\prod}\left(\mathfrak{D}_p^s(\alpha_i)+\tau_s\right)_{j_s}}{\underset{i\notin\mathcal{J}_s}{\prod}\left(\mathfrak{D}_p^s(\beta_i)+\tau_s\right)_{j_s}}\\
&\underset{\mathcal{J}_{s,>1}\neq\emptyset}{\prod\limits_{s=0}\limits^{k}}\frac{\underset{i\in\mathcal{J}_{s}}{\prod}\left(\mathfrak{D}_p^s(\alpha_i)+\tau_s\right)_{j_s}}{\underset{i\in\mathcal{J}_{s,1}}{\prod}\left(\mathfrak{D}_p^s(\beta_i)+\tau_s\right)_{j_s}}\cdot\frac{\underset{i\notin\mathcal{I}_s}{\prod}\left(\mathfrak{D}_p^s(\alpha_i)+\tau_s\right)_{j_s}}{\underset{i\notin\mathcal{J}_s}{\prod}\left(\mathfrak{D}_p^s(\beta_i)+\tau_s\right)_{j_s}}
\end{align*}

and $$
\Theta=\underset{\mathcal{J}_{s,>1}=\emptyset}{\prod\limits_{s=0}\limits^{k}}\underset{i\in\mathcal{I}_s\setminus\mathcal{J}_s}{\prod}\left(\mathfrak{D}_p^s(\alpha_i)+\tau_s\right)_{j_s}\cdot\underset{\mathcal{J}_{s,>1}\neq\emptyset}{\prod\limits_{s=0}\limits^{k}}\frac{\underset{i\in\mathcal{I}_s\setminus\mathcal{J}_s}{\prod}\left(\mathfrak{D}_p^s(\alpha_i)+\tau_s\right)_{j_s}}{\underset{i\in\mathcal{J}_{s,>1}}{\prod}\left(\mathfrak{D}_p^s(\beta_i)+\tau_s\right)_{j_s}}.
$$
We also have $\Theta=\Theta_0\Theta_1$, where 
$$\Theta_0=\underset{i\in\mathcal{I}_0\setminus\mathcal{J}_0}{\prod}\left(\mathfrak{D}_p^s(\alpha_i)+\tau_0\right)_{j_0}\quad\text{ and }\quad\Theta_1=\frac{{\prod\limits_{s=1}\limits^{k}}\underset{i\in\mathcal{I}_s\setminus\mathcal{J}_s}{\prod}\left(\mathfrak{D}_p^s(\alpha_i)+\tau_s\right)_{j_s}}{\underset{\mathcal{J}_{s,>1}\neq\emptyset}{\prod\limits_{s=0}\limits^{k}}{\underset{i\in\mathcal{J}_{s,>1}}{\prod}\left(\mathfrak{D}_p^s(\beta_i)+\tau_s\right)_{j_s}}}.
$$
We now prove $v_p(\Psi\Theta_1)\geq 0$. From the definition of $\Psi$, it is clear that $$v_p(\Psi)\geq\underset{\mathcal{J}_{s,>1}\neq\emptyset}{\sum\limits_{s=0}\limits^{k}}(\#\mathcal{J}_s-\#\mathcal{J}_{s,1}).$$ So, in order to prove that $v_p(\Psi\Theta_1)\geq 0$, it is sufficient to prove that 
\begin{equation}\label{eq_theta_1}
v_p(\Theta_1)\geq\underset{\mathcal{J}_{s,>1}\neq\emptyset}{\sum\limits_{s=0}\limits^{k}}(\#\mathcal{J}_{s,1}-\#\mathcal{J}_s).
\end{equation}
Let $s$ be in $\{0,\ldots, k\}$ such that $J_{s,>1}\neq\emptyset$. Note that $k-s>0$ because $J_{k,>1}=\emptyset$\footnote{If $J_{k,>1}\neq\emptyset$ then there is $i\in\{1,\ldots,n\}$ such that $v_p(\mathfrak{D}_p^k(\beta_i)_{j_k})>1$. Since $j_k<p$, Step II implies  $v_p(\mathfrak{D}_p^k(\beta_i)_{j_k})=1$ which is a contradiction. Thus, $J_{k,>1}=\emptyset$.}.  Let $i\in J_{s,>1}$ and let $l=v_p( (\mathfrak{D}_p^s(\beta_i)+\tau_s))_{j_{s}})$.  Then $l\geq2$ and there exists $k_s<j_s$ such that $\mathfrak{D}_p^s(\beta_i)+\tau_s+k_s\in p\mathbb{Z}_{(p)}$. We now proceed to prove some properties which are crucial to prove Equation~\eqref{eq_theta_1}.

(\textbf{A}). We prove that $v_p(\mathfrak{D}_p^s(\beta_i)+\tau_s+k_s)=l$. In fact, we have  $$ (\mathfrak{D}_p^s(\beta_i)+\tau_s))_{j_{s}}=(\mathfrak{D}_p^s(\beta_i)+\tau_s+k_s)\prod_{t=0,t\neq k_s}^{j_s-1}(\mathfrak{D}_p^a(\beta_i)+\tau_s+t).$$
But the $p$-adic valuation of $\prod_{t=0,t\neq k_s}^{j_s-1}(\mathfrak{D}_p^a(\beta_i)+\tau_s+t)$ is zero because $k_s<j_s<p$ and $k_s$ is the unique element in $\{0,\ldots, p-1\}$ such that $\mathfrak{D}_p^s(\beta_i)+\tau_s+k_s\in p\mathbb{Z}_{(p)}$. Thus, $v_p(\mathfrak{D}_p^s(\beta_i)+\tau_s+k_s)=l$. In particular, we have $\mathfrak{D}_p^s(\beta_i)+\tau_s+k_s=p^l\mu$, with $\mu\in\mathbb{Z}_{(p)}^*$.

(\textbf{B}). We now show that, for all $1\leq m\leq\min\{k-s,l-1\}$, $\mathfrak{D}_p^{s+m}(\beta_i)+j_{s+m}\in p\mathbb{Z}_{(p)}$ and that $$\mathfrak{D}_p^{s+m}(\beta_i)+j_{s+m}+j_{s+m+1}p+\cdots+j_{k}p^{k-s-m}=p^{l-m}\mu.$$ We proceed by induction on $m\in\{1,\ldots, q-1\}$, where $q=\min\{k-s,l-1\}$. From (\textbf{A}), we have $\mathfrak{D}_p^s(\beta_i)+\tau_s+k_s=p^l\mu$, with $\mu\in\mathbb{Z}_{(p)}^*$.  As $\tau_s\in p\mathbb{Z}$ then $\mathfrak{D}_p^s(\beta_i)+k_s\in p\mathbb{Z}_{(p)}$. Therefore, $\mathfrak{D}_p^s(\beta_i)+k_s=p\mathfrak{D}_{p}^{s+1}(\beta_i)$ because $k_s<j_s<p$. Whence, $p\mathfrak{D}_{p}^{s+1}(\beta_i)+\tau_s=p^l\mu$. Remember that $\tau_s=j_{s+1}p+\cdots+k_{k}p^{k-s}$. Hence, $\mathfrak{D}_{p}^{s+1}(\beta_i)+j_{s+1}+j_{s+2}p+\cdots+j_{k}p^{k-s-1}=p^{l-1}\mu$ and $\mathfrak{D}_{p}^{s+1}(\beta_i)+j_{s+1}\in p\mathbb{Z}_{(p)}$. We now suppose that for some $m\in\{1,\ldots, q-2\}$, $\mathfrak{D}_p^{s+m}(\beta_i)+j_{s+m}\in p\mathbb{Z}_{(p)}$ and that $$\mathfrak{D}_p^{s+m}(\beta_i)+j_{s+m}+j_{s+m+1}p+\cdots+j_{k}p^{k-s-m}=p^{l-m}\mu.$$ We have $\mathfrak{D}_p^{s+m}(\beta_i)+j_{s+m}= p\mathfrak{D}_p^{s+m+1}(\beta_i)$ because $j_{s+m}<p$ and, by induction hypothesis, $\mathfrak{D}_p^{s+m}(\beta_i)+j_{s+m}\in p\mathbb{Z}_{(p)}$. So $$\mathfrak{D}_p^{s+m+1}(\beta_i)+j_{s+m+1}+\cdots+j_{k}p^{k-s-m}-1=p^{l-m-1}\mu.$$

(\textbf{C}). We now see that $l\leq k-s+1$. Suppose, towards a contradiction, that $l>k-s+1$. From (\textbf{B}), we know that, for all $m\in\{1,\ldots, k-s\}$, $$\mathfrak{D}_p^{s+m}(\beta_i)+j_{s+m}+j_{s+m+1}p+\cdots+j_{k}p^{k-s-m}=p^{l-m}\mu.$$ In particular, $\mathfrak{D}_p^{k}(\beta_i)+j_k=p^{l-k+s}\mu$ and $\mathfrak{D}_p^{k}(\beta_i)+j_k\in p\mathbb{Z}_{(p)}$. Hence, $\mathfrak{D}_p^{k}(\beta_i)+j_k=p\mathfrak{D}_p^{k+1}(\beta_i)$ because $j_k<p$. So $\mathfrak{D}_p^{k+1}(\beta_i)=p^{l-k+s-1}\mu$. But $l-k+s-1>0$ and hence, $\mathfrak{D}_p^{k+1}(\beta_i)\in p\mathbb{Z}_{(p)}$. From the definition of $\mathfrak{D}_p$, it is clear that, for all $t\geq1$, $d_{\mathfrak{D}_p^t(\bm\alpha),\mathfrak{D}_p^t(\bm\beta)}$ divides $d_{\bm\alpha,\bm\beta}$. Since by assumption, $p>d_{\bm\alpha,\bm\beta}$, we get that, for  all $t\geq1$, $p>d_{\mathfrak{D}_p^t(\bm\alpha),\mathfrak{D}_p^t(\bm\beta)}$. Further, $\mathfrak{D}_p(\beta_i)\in\mathbb{Z}_{(p)}^*\cap(0,1]$ because $(\bm\alpha, \bm\beta)$ satisfies the $\textbf{P}_{p,l}$ property. Thus, by (3) of Proposition~\ref{prop_d_p}, we deduce that, for all $t\geq1$, $\mathfrak{D}_p^t(\beta_i)\in\mathbb{Z}_{(p)}^*$. In particular, $\mathfrak{D}_p^{k+1}(\beta_i)\in \mathbb{Z}_{(p)}^*$, which is a contradiction to the fact that $\mathfrak{D}_p^{k+1}(\beta_i)\in p\mathbb{Z}_{(p)}$. Consequently, $l\leq k-s+1$.

(\textbf{D}). Now, we prove see that, for every $m\in\{1,\ldots,l-1\}$, $i\in\mathcal{I}_{s+m}\setminus\mathcal{J}_{s+m}$. From (\textbf{B}) and (\textbf{C}), we have $\mathfrak{D}_p^{s+m}(\beta_i)+j_{s+m}\in p\mathbb{Z}_{(p)}$ for all $m\in\{1,\ldots,l-1\}$. Then $i\in\mathcal{P}_{\mathfrak{D}_p^{s+m}(\bm\beta),j_{s+m}+1}$. By Step IV, we have $\mathcal{P}_{\mathfrak{D}_p^{s+m}(\bm\beta),j_{s+m}+1}\subset \mathcal{P}_{\mathfrak{D}_p^{s+m}(\bm\alpha),j_{s+m}+1}$. Hence, $i\in\mathcal{P}_{\mathfrak{D}_p^{s+m}(\bm\alpha),j_{s+m}+1}$. From Step I, we know that, for all $a\geq1$, $\mathfrak{D}_p^{a}(\alpha_i)-\mathfrak{D}_p^a(\beta_j)$ belongs to $\mathbb{Z}_{(p)}^*$ for all $1\leq i,j\leq n$. Since $\mathfrak{D}_p^{s+m}(\beta_i)+j_{s+m}\in p\mathbb{Z}_{(p)}$, we get that, for all $r\in\{1,\ldots,n\}$, $\mathfrak{D}_p^{s+m}(\alpha_r)+j_{s+m}\notin p\mathbb{Z}_{(p)}$. Hence, $i\in\mathcal{P}_{\mathfrak{D}_p^{s+m}(\bm\alpha),j_{s+m}}=\mathcal{I}_{s+m}.$ We now show that $i\notin\mathcal{J}_{s+m}$. Aiming for a contradiction, suppose that $i\in\mathcal{J}_{s+m}$. Thus, there is $d<j_{s+m}$ such that $\mathfrak{D}_p^{s+m}(\beta_i)+d +\tau_{j_{s+m}}\in p\mathbb{Z}_p$. Since $\tau_{j_{s+m}}\in p\mathbb{Z}$, $\mathfrak{D}_p^{s+m}(\beta_i)+d\in p\mathbb{Z}_p$. But we know that $\mathfrak{D}_p^{s+m}(\beta_i)+j_{s+m}\in p\mathbb{Z}_{(p)}$. Thus, $j_{s+m}-d\in p\mathbb{Z}$. That is a contradiction because $0\leq d<j_{s+m}<p$. Consequently, for all $m\in\{1,\ldots, l-1\}$, $i\in\mathcal{I}_{s+m}\setminus\mathcal{J}_{s+m}$. In particular, $v_p(\mathfrak{D}_p^s(\alpha_i)+\tau_{s+m})_{j_{s+m}})\geq1$. Whence, $$v_p\left(\frac{\prod_{m=1}^{l-1}(\mathfrak{D}_p^s(\alpha_i)+\tau_{s+m})_{j_{s+m}}}{(\mathfrak{D}^s_p(\beta_i)+\tau_{j_s})_{j_s}}\right)\geq -1.$$

From (\textbf{C}), we have $l\leq k-s+1$. Thus, for all $m\in\{1,\ldots, l-1\}$, $s+m\leq k$. So, from (\textbf{D}) it follows that the product $\prod_{m=1}^{l-1}(\mathfrak{D}_p^s(\alpha_i)+\tau_{s+m})_{j_{s+m}}$ is a factor of ${\prod\limits_{s=1}\limits^{k}}\underset{i\in\mathcal{I}_s\setminus\mathcal{J}_s}{\prod}\left(\mathfrak{D}_p^s(\alpha_i)+\tau_s\right)_{j_s}$. Consequenly, $$v_p(\Theta_1)\geq\underset{\mathcal{J}_{s,>1}\neq\emptyset}{\sum\limits_{s=0}\limits^{k}}(\#\mathcal{J}_{s,1}-\#\mathcal{J}_s)$$ because $\#\mathcal{J}_{s,>1}=\#\mathcal{J}_{s}-\#\mathcal{J}_{s,1}$.

Finally, from Step III, we have $$v_p(\mathcal{Q}_{\bm\alpha,\bm\beta}(r))=\#\mathcal{P}_{\bm\alpha,r}-\#\mathcal{P}_{\bm\beta,r}.$$ 
By assumption, $v_p(\mathcal{Q}_{\bm\alpha,\bm\beta}(r))>0$.
Since $r=j_0$, it is not hard to see that $\mathcal{J}_0=\mathcal{P}_{\bm\beta,r}$ and that $\mathcal{I}_0=\mathcal{P}_{\bm\alpha,r}$. Thus $\#\mathcal{J}_0<\#\mathcal{I}_0$. Whence, $v_p(\Theta_0)>0$.
This completes the proof.

\end{proof}

\begin{lemm}\label{lemm_exp}
Let $\bm\alpha=(\alpha_1,\ldots,\alpha_n)$ and $\bm\beta=(\beta_1,\ldots,\beta_{n-1},1)$ be in $(\mathbb{Q}\setminus\mathbb{Z}_{\leq0})^n$ and let $p$ be a prime number such that $\mathcal{H}(\bm\alpha,\bm\beta)\in\mathbb{Z}_{(p)}[z][\delta]$. Suppose that $A(z)=a_mz^{m}+\cdots+a_rz^{r}\in\mathbb{F}_p[z]$ is a solution of $\mathcal{H}(\bm\alpha,\bm\beta,p)$. If $a_m\neq0$ then $m\bmod p$ is an exponent at zero of  $\mathcal{H}(\bm\alpha,\bm\beta,p)$, that is, $m\bmod p$ belongs to $\{0,1-\beta_1\bmod p,\ldots, 1-\beta_{n-1}\bmod p\}$.
\end{lemm}

\begin{proof}
It is not hard to see that $$\mathcal{H}(\bm\alpha,\bm\beta,p)(A(z))=\prod_{j=1}^{n}(m+\beta_j-1\bmod p)a_mz^m+z^{m+1}B(z),$$
where $B(z)$ is a polynomial. As $\mathcal{H}(\bm\alpha,\bm\beta,p)(A(z))=0$ then $\prod_{j=1}^{n}(m+\beta_j-1\bmod p) a_m=0$. So, $\prod_{j=1}^{n}(m+\beta_j-1\bmod p)=0$ because, by hypothesis, $a_m\neq0$. Thus, we have $m\bmod p=1-\beta_j\bmod p$ for some $j\in\{1,\ldots,n\}$.

\end{proof}

\subsection{Proof of Lemma~\ref{lemm_cartier_operator}}

Let us write $E_{\bm\alpha,\bm\beta,p}=\{e_0, e_1,\ldots, e_k\}$, where $e_i<e_{i+1}$ for all $i\in\{0,\ldots,k\}$ and $e_0=0$. We set $e_{k+1}=p$. So,
for all $e_i\in\{e_0,e_1,\ldots, e_{k}\}$ and, for all nonnegative intergers $j$, we set $$P_{j,e_i}(z)=\sum_{s=jp+e_i}^{jp+e_{i+1}-1}(\mathcal{Q}_{\bm\alpha,\bm\beta}(s)\bmod p)z^{s}.$$

We split the proof into three steps:

\textbf{Step I}. For all $e_i\in\{e_0,\ldots, e_k\}$ and for all nonnegative integers $j$, the polynomial $P_{j,e_i}(z)$ is a solution of $\mathcal{H}(\bm\alpha,\bm\beta, p)$. 

\textbf{Step II}.  If $\mathcal{Q}_{\bm\alpha,\bm\beta}(e_i)\bmod p=0$ then, for every integer $j\geq0$, $P_{j,e_i}(z)$ is the zero polynomial. 

\textbf{Step III}. If $\mathcal{Q}_{\bm\alpha,\bm\beta}(e_i)\bmod p\neq0$  then, for every integer $j\geq0$, $$P_{j,e_i}(z)=\frac{\mathcal{Q}_{\bm\alpha,\bm\beta}(jp+e_i)}{\mathcal{Q}_{\bm\alpha,\bm\beta}(e_i)}\bmod p\cdot z^{jp}P_{0,e_i}(z).$$

\begin{proof}[Proof of Step I]
 It is not hard to see that 
\begin{equation}\label{eq_01}
\mathcal{H}(\bm\alpha,\bm\beta)=(1-z)\delta^{n}+[S_{n,1}(\bm\beta-\textbf{1})-zS_{n,1}(\bm\alpha)]\delta^{n-1}+\cdots+S_{n,n}(\bm\beta-\textbf{1})-zS_{n,n}(\bm\alpha),
\end{equation}
 where $\textbf{1}=(1,\ldots,1)\in\mathbb{N}^n$ and $S_{n,r}=\sum_{1\leq i_{1}<\cdots<i_{r}\leq n}X_{i_1}\cdots X_{i_{r}}.$

Now, we set $I(z)=\prod_{i=1}^n(z+\beta_i-1)$ and $T(z)=\prod_{i=1}^n(z+\alpha_i)$. Then, it follows from Equality~\eqref{eq_01} that 
\begin{align*}
\mathcal{H}(\bm\alpha,\bm\beta,p)(P_{j,e_i}(z))&=(I(jp+e_i)\mathcal{Q}_{\bm\alpha,\bm\beta}(jp+e_i)\bmod p)z^{jp+e_i}\\
&+\sum_{k=jp+e_i+1}^{jp+e_{i+1}-1}((I(k)\mathcal{Q}_{\bm\alpha,\bm\beta}(k)-T(k-1)\mathcal{Q}_{\bm\alpha,\bm\beta}(k-1))\bmod p) z^k\\
&-(T(jp+e_{i+1}-1)\mathcal{Q}_{\bm\alpha,\bm\beta}(jp+e_{i+1}-1)\bmod p)z^{jp+e_{i+1}}.
\end{align*}

We now prove that $\mathcal{H}(\bm\alpha,\bm\beta,p)(P_{j,e_i}(z))=0$. Recall that, by definition, $e_i\bmod p$ belongs to $\{0,1-\beta_1,\ldots, 1-\beta_{n-1}\}\bmod p$. Then, we have  $I(e_i)\bmod p\equiv0$. But, $I(jp+e_i)\equiv I(e_i)\bmod p$. Thus, $I(jp+e_i)\bmod p\equiv0$.  So that, $I(jp+e_i)\mathcal{Q}_{\bm\alpha,\bm\beta}(jp+e_i)\bmod p\equiv0$. Furthermore, it is clear that, for every positive integer $t$ we have 
$$I(t)\mathcal{Q}_{\bm\alpha,\bm\beta}(t)-\mathcal{Q}_{\bm\alpha,\bm\beta}(t-1)T(t-1)=0.$$ By hypotheses,  $\bm\alpha$ and $\bm\beta$ belong to $\mathbb{Z}_{(p)}^n$. Thus, for every integer $t$, $I(t)$ and $T(t-1)$ belong to $\mathbb{Z}_{(p)}$. Again, by hypotheses, we know that ${}_nF_{n-1}(\bm\alpha,\bm\beta;z)$ belongs to $\mathbb{Z}_{(p)}[[z]]$. Consequently, for every integer $t\geq1$, $\mathcal{Q}_{\bm\alpha,\bm\beta}(t)$ and $\mathcal{Q}_{\bm\alpha,\bm\beta}(t-1)$ belong to $\mathbb{Z}_{(p)}$. Therefore, from the previous equality we conclude that, for every positive integer $t$,  
\begin{equation}\label{eq_02}
(I(t)\mathcal{Q}_{\bm\alpha,\bm\beta}(t)-\mathcal{Q}_{\bm\alpha,\bm\beta}(t-1)T(t-1))\bmod p=0.
\end{equation}
In particular, for every $k\in\{jp+e_i+1,\ldots, jp+e_{i+1}-1\}$, we have $$(I(k)\mathcal{Q}_{\bm\alpha,\bm\beta}(k)-T(k-1)\mathcal{Q}_{\bm\alpha,\bm\beta}(k-1))\bmod p=0$$ and $$(I(jp+e_{i+1})\mathcal{Q}_{\bm\alpha,\bm\beta}(jp+e_{i+1})-\mathcal{Q}_{\bm\alpha,\bm\beta}(jp+e_{i+1}-1)T(jp+e_{i+1}-1))\bmod p=0.$$
 Since $e_{i+1}\bmod p$ is an exponent at zero of $\mathcal{H}(\bm\alpha,\bm\beta,p)$, $I(e_{i+1})\bmod p\equiv0$. But, it is clear that $I(jp+e_{i+1})\equiv I(e_{i+1})\bmod p$. Thus, $I(jp+e_{i+1})\bmod p\equiv0$. Therefore, we have $$\mathcal{Q}_{\bm\alpha,\bm\beta}(jp+e_{i+1}-1)T(jp+e_{i+1}-1))\equiv0\bmod p.$$
So that, $\mathcal{H}(\bm\alpha,\bm\beta,p)(P_{j,e_i}(z))=0$. 

\end{proof}

\begin{proof}[Proof of Step II] Suppose that $\mathcal{Q}_{\bm\alpha,\bm\beta}(e_i)\bmod p=0$. Let $j$ be a nonnegative integer. We want to show that $P_{j,e_i}(z)$ is the zero polynomial. For this purpose, we show by induction on $s\in\{jp+e_i,jp+e_i+1,\ldots, jp+e_{i+1}-1\}$ that $\mathcal{Q}_{\bm\alpha,\bm\beta}(s)\equiv0\bmod p$. Since $v_p(\mathcal{Q}_{\bm\alpha,\bm\beta}(e_i))>0$, by Lemma~\ref{lemm_padic_valuation}, we have $v_p(\mathcal{Q}_{\bm\alpha,\bm\beta}(jp+e_i))>0$. So that,$\mathcal{Q}_{\bm\alpha,\bm\beta}(jp+e_i))\bmod p=0$.  Now, suppose that $\mathcal{Q}_{\bm\alpha,\bm\beta}(s)\equiv0\bmod p$ for some $s$ in the set $\{jp+e_i,jp+e_i+1,\ldots, jp+e_{i+1}-2\}$. From Equation $\eqref{eq_02}$ we know that $(I(s+1)\mathcal{Q}_{\bm\alpha,\bm\beta}(s+1)-\mathcal{Q}_{\bm\alpha,\bm\beta}(s)T(s))\bmod p=0$. By applying our induction hypothesis, we obtain $I(s+1)\mathcal{Q}_{\bm\alpha,\bm\beta}(s+1)\bmod p=0$. Suppose, towards a contradiction, that $I(s+1)\equiv0\bmod p$. Then, $s+1\bmod p$ is an exponent at  zero of $\mathcal{H}(\bm\alpha,\bm\beta,p)$ and since $jp+e_i<s+1<jp+e_{i+1}$, we have $e_i<s+1-jp<e_{i+1}$. So, $0\leq s+1-jp<p$ and  therefore, $s+1-jp\in E_{\bm\alpha,\bm\beta,p}$. Hence, there is $m\in\{0,\ldots,k\}$ such that $e_m=s+1-jp$. Then, $e_i<e_m<e_{i+1}$. Now, we know that if $i<m$ then $e_{i+1}\leq e_m$ and if $m<i$ then $e_m<e_i$. This is a clear contradiction of the fact that $e_i<e_m<e_{i+1}$. Thus, $I(s+1)\bmod p\neq0$. Then, it follows that $\mathcal{Q}_{\bm\alpha,\bm\beta}(s+1)\bmod p=0$. Therefore, $P_{j,e_i}(z)$ is the zero polynomial.

\end{proof}

\begin{proof}[Proof of Step III]
  From Step I, we know that $P_{j,e_i}$ and $P_{0,e_i}$ are solutions of  $\mathcal{H}(\bm\alpha,\bm\beta,p)$. Thus, the polynomial $P_{j,e_i}(z)-\frac{\mathcal{Q}_{\bm\alpha,\bm\beta}(jp+e_i)}{\mathcal{Q}_{\bm\alpha,\bm\beta}(e_i)}\bmod p\cdot z^{jp}P_{0,e_i}(z)$ is a solution of $\mathcal{H}(\bm\alpha,\bm\beta,p)$. Suppose, to derive a contradiction, that this polynomial is not zero. Then, from Lemma~\ref{lemm_exp} it follows that the differential operator $\mathcal{H}(\bm\alpha,\bm\beta,p)$ has an exponent at zero in the set $\{(e_i+1)\bmod p,\ldots, (e_{i+1}-1)\bmod p\}$ because 
\begin{align*}
& P_{j,e_i}(z)-\frac{\mathcal{Q}_{\bm\alpha,\bm\beta}(jp+e_i)}{\mathcal{Q}_{\bm\alpha,\bm\beta}(e_i)}\bmod p\cdot z^{jp}P_{0,e_i}(z)\\
&=\sum_{s=e_i+1}^{e_{i+1}-1}\left(\mathcal{Q}_{\bm\alpha,\bm\beta}(jp+s)-\frac{\mathcal{Q}_{\bm\alpha,\bm\beta}(jp+e_i)\mathcal{Q}_{\bm\alpha,\bm\beta}(s)}{\mathcal{Q}_{\bm\alpha,\bm\beta}(e_i)}\bmod p\right)z^{jp+s}.
\end{align*}
 Therefore, there exits $e_m\in E_{\bm\alpha,\beta,p}\cap\{e_i+1,\ldots, e_{i+1}-1\}$.  If $i<m$ then $e_{i+1}\leq e_m$ and if $m<i$ then $e_m<e_i$. This leads to the contradiction that $e_m\in\{e_i+1,\ldots, e_{i+1}-1\}$. 
 
 Consequently, the polynomial $P_{j,e_i}(z)-\frac{\mathcal{Q}_{\bm\alpha,\bm\beta}(jp+e_i)}{\mathcal{Q}_{\bm\alpha,\bm\beta}(e_i)}\bmod p\cdot z^{jp}P_{0,e_i}(z)$ is the zero polynomial. 
 
 \end{proof}
 
Now, we are in a position to finish the proof of the lemma. Let us write $S_{\bm\alpha,\bm\beta,p}=\{0,r_1,\ldots, r_t\}$. It is clear that $$f(z)\bmod p=\sum_{j\geq0}\left(\sum_{i=0}^kP_{j,e_i}(z)\right).$$ From Step II we deduce that if $e_i\in E_{\bm\alpha,\bm\beta,p}\setminus S_{\bm\alpha,\bm\beta,p}$ then, for all integers $j\geq0$, $P_{j,e_i}(z)$ is the zero polynomial. Then, we have 
 
 $$f(z)\bmod p=\sum_{j\geq0}P_{j,0}(z)+\sum_{j\geq0}P_{j,r_1}(z)+\cdots+\sum_{j\geq0}P_{j,r_t}(z).$$ From Step III we conclude that if $r_i\in S_{\bm\alpha,\bm\beta,p}$ then, for all integers $j\geq0$, \[P_{j,r_i}(z)=\mathcal{Q}_{\bm\alpha,\bm\beta}(jp+r_i)\bmod p\cdot z^{jp}P_{i}(z),\] where $P_i(z)=\sum_{s=r_i}^{e_j-1}\frac{\mathcal{Q}_{\bm\alpha,\bm\beta}(s)}{\mathcal{Q}_{\bm\alpha,\bm\beta}(r_i)}z^s$ with $e_{j-1}=r_i$.
  
  Therefore, 
  \begin{align*}
  \sum_{j\geq0}P_{j,r_i}(z)&=\sum_{j\geq0}(\mathcal{Q}_{\bm\alpha,\bm\beta}(jp+r_i)\bmod p\cdot z^{jp}P_i(z))\\
  &=P_i(z)\sum_{j\geq0}(\mathcal{Q}_{\bm\alpha,\bm\beta}(jp+r_i)\bmod p\cdot z^{jp})\\
  &=P_i(z)\Lambda_{r_i}(f)^p.
  \end{align*}
Consequently, we have   $$f(z)\equiv\sum_{r\in S_{\bm\alpha,\bm\beta,p}} P_r(z)\Lambda_r(f)^p\bmod p\quad\text{ with }\quad P_r(z)=\sum_{s=r}^{r'-1}\frac{\mathcal{Q}_{\bm\alpha,\bm\beta}(s)}{\mathcal{Q}_{\bm\alpha,\bm\beta}(r)}z^s.$$

 $\hfill\square$

\subsection{Auxiliary result II}\label{sec_aux_2}

With the aim of carrying out the proof of Lemma~\ref{lemm_first_step}, we need one more auxiliary result.

\begin{lemm}\label{lemm_ultra_tec}
Let $p$ be a primer number, let $r$ be in $\{0,1,\ldots,p-1\}$ and let $\bm\alpha=(\alpha_1,\ldots,\alpha_n)$ and $\bm\beta=(\beta_1,\ldots,\beta_{n-1},1)$ be in $(\mathbb{Z}_{(p)}\setminus\mathbb{Z}_{\leq0})^n$. Consider the following elements:  $$\theta=\frac{\underset{s\in\mathcal{C}_{\bm\alpha,r}}{\prod}(\alpha_s)_{r}}{\underset{s\in\mathcal{C}_{\bm\beta,r}}{\prod}(\beta_s)_{r}}\text{, }\tau=\frac{\underset{s\in\mathcal{P}_{\bm\alpha,r}}{\prod}\left(\underset{t\neq p\mathfrak{D}_p(\alpha_s)-\alpha_s}{\prod\limits_{t=0}\limits^{r-1}(\alpha_s+t)}\right)}{\underset{s\in\mathcal{P}_{\bm\beta,r}}{\prod}\left(\underset{t\neq p\mathfrak{D}_p(\beta_s)-\beta_s}{\prod\limits_{t=0}\limits^{r-1}(\beta_s+t)}\right)},\text{ and }\lambda_j=\frac{\underset{s\in\mathcal{P}_{\bm\alpha,r}}{\prod}(\mathfrak{D}_p(\alpha_s)+j)}{\underset{s\in\mathcal{P}_{\bm\beta,r}}{\prod}(\mathfrak{D}_p(\beta_s)+j)},$$
with $j\in\mathbb{Z}_{\geq0}$.
 Suppose that $\mathfrak{D}_p(\bm\alpha)$, $\mathfrak{D}_p(\bm\beta)$ belong to $(\mathbb{Z}_{(p)}^*)^n$ and that $v_p(\mathcal{Q}_{\bm\alpha,\bm\beta}(r))=0$. Then:
 \begin{enumerate}
 \item  $v_p(\lambda_0)=v_p(\theta)=v_p(\tau)=0,$
 \item  $\#\mathcal{P}_{\bm\alpha,r}=\#\mathcal{P}_{\bm\beta,r}$ and $\mathcal{Q}_{\bm\alpha,\bm\beta}(r)=\lambda_0\tau\theta$,
  \item  for every integer $j\geq0$, $\mathcal{Q}_{\bm\alpha,\bm\beta}(jp+r)=\mathcal{Q}_{\mathfrak{D}_p(\bm\alpha),\mathfrak{D}_p(\bm\beta),p}(j)\lambda_j\nu$, where $\nu\in\mathbb{Z}_{(p)}^*$ and $$\nu\bmod p=(\tau\bmod p)(\theta\bmod p),$$
 \item if for every integer $j\geq0$, $v_p(\mathcal{Q}_{\bm\alpha,\bm\beta}(j))\geq0$, then, for every $j\geq0$, we have $v_p(\mathcal{Q}_{\mathfrak{D}_p(\bm\alpha),\mathfrak{D}_p(\bm\beta)}(j)\lambda_j)\geq0$ and $$v_p\left(\frac{\underset{s\in\mathcal{C}_{\bm\alpha,r}}{\prod}\mathfrak{D}_p(\alpha_s)_j {\underset{s\in\mathcal{P}_{\bm\alpha,r}}{\prod}(\mathfrak{D}_p(\alpha_s)+1)_j}}{\underset{s\in\mathcal{C}_{\bm\beta,r}}{\prod}\mathfrak{D}_p(\beta_s)_j \underset{s\in\mathcal{P}_{\bm\beta,r}}{\prod}(\mathfrak{D}_p(\beta_s)+1)_j}\right)\geq0,$$
\item if for every integer $j\geq0$, $v_p(\mathcal{Q}_{\bm\alpha,\bm\beta}(j))\geq0$, then, for every integer $j\geq0$,  
$$\mathcal{Q}_{\bm\alpha,\bm\beta}(jp+r)\bmod p=\left(\frac{\underset{s\in\mathcal{C}_{\bm\alpha,r}}{\prod}\mathfrak{D}_p(\alpha_s)_j {\underset{s\in\mathcal{P}_{\bm\alpha,r}}{\prod}(\mathfrak{D}_p(\alpha_s)+1)_j}}{\underset{s\in\mathcal{C}_{\bm\beta,r}}{\prod}\mathfrak{D}_p(\beta_s)_j \underset{s\in\mathcal{P}_{\bm\beta,r}}{\prod}(\mathfrak{D}_p(\beta_s)+1)_j}\bmod p\right)(\mathcal{Q}_{\bm\alpha,\bm\beta}(r)\bmod p).$$

 \end{enumerate}
 
\end{lemm}

\begin{proof} We first suppose that $r=0$. Then $\mathcal{P}_{\bm\alpha,r}=\emptyset=\mathcal{P}_{\bm\beta,r}$. So, $\tau=1$ and, for all $j\in\mathbb{Z}_{\geq0}$, $\lambda_j=1$. Also, it is clear that $\theta=1$. Therefore, (1) and (2) are satisfied and (3), (4), and (5) follows immediately from Lemma~\ref{lemm_lambda_0}. We now suppose that $r>0$.

(1)

\textbullet\quad We prove that $v_p(\tau)=0$. If $s\in\mathcal{P}_{\bm\alpha,r}$ then the $p$-adic valuation of $$\underset{t\neq p\mathfrak{D}_p(\alpha_s)-\alpha_s}{\prod\limits_{t=0}\limits^{r-1}(\alpha_s+t)}$$ is zero because $k=p\mathfrak{D}_p(\alpha_s)-\alpha_s$ is the unique element in $\{0,1,\ldots, p-1\}$ such that $\alpha_s+k\in p\mathbb{Z}_{(p)}$ and, by assumption, $0<r<p$. Similarly, if $s\in\mathcal{P}_{\bm\beta,r}$ then the $p$-adic valuation of $$\underset{t\neq p\mathfrak{D}_p(\beta_s)-\beta_s}{\prod\limits_{t=0}\limits^{r-1}(\beta_s+t)}$$ is zero. Therefore,  $v_p(\tau)=0$.

\textbullet\quad We prove that $v_p(\theta)=0$. It is clear that if $s\in\mathcal{C}_{\bm\alpha,r}$ then  the $p$-adic valuation of $(\alpha_s)_{r}$ is zero. Likewise, if $s\in\mathcal{C}_{\bm\beta,r}$ then the $p$-adic valuation of $(\beta_s)_{r}$ is zero. Thus, the $p$-adic valuation of $\theta$ is zero.
 
\textbullet\quad Finally, $v_p(\lambda_0)=0$ because by assumption, $\mathfrak{D}_p(\bm\alpha)$, $\mathfrak{D}_p(\bm\beta)$ belong to $(\mathbb{Z}_{(p)}^*)^n$.

(2).  It is clear that $\mathcal{Q}_{\bm\alpha,\bm\beta}(r)=p^{\#\mathcal{P}_{\bm\alpha,r}-\#\mathcal{P}_{\bm\beta,r}}\lambda_0\tau\theta$. From (1), we know that $v_p(\tau)=v_p(\lambda_0)=v_p(\theta)=0$ and by asumption, $v_p(\mathcal{Q}_{\bm\alpha,\bm\beta}(r))=0$. Thus, $v_p(p^{\#\mathcal{P}_{\bm\alpha,r}-\#\mathcal{P}_{\bm\beta,r}})=0$. Whence, $\#\mathcal{P}_{\bm\alpha,r}=\#\mathcal{P}_{\bm\beta,r}$. So, $\mathcal{Q}_{\bm\alpha,\bm\beta}(r)=\lambda_0\tau\theta.$

(3). Let $j$ be a nonnegative integer. The following equality is straightforward 
\begin{equation*}
\mathcal{Q}_{\bm\alpha,\bm\beta}(jp+r)=\mathcal{Q}_{\bm\alpha,\bm\beta}(jp)\mathcal{Q}_{\bm\alpha+\textbf{jp},\bm\beta+\textbf{jp}}(r),
\end{equation*}
where $$\mathcal{Q}_{\bm\alpha+\textbf{jp},\bm\beta+\textbf{jp}}(r)=\frac{(\alpha_1+jp)_{r}\cdots(\alpha_n+jp)_{r}}{(\beta_1+jp)_{r}\cdots(\beta{}_n+jp)_{r}}.$$
Clearly, we also have
\begin{equation*}
\mathcal{Q}_{\bm\alpha+\textbf{jp},\bm\beta+\textbf{jp}}(r)=\frac{\prod_{s\in\mathcal{P}_{\bm\alpha,r}}(\alpha_s+jp)_{r}\prod_{s\in\mathcal{C}_{\bm\alpha,r}}(\alpha_s+jp)_{r}}{\prod_{s\in\mathcal{P}_{\bm\beta,r}}(\beta_s+jp)_{r}\prod_{s\in\mathcal{C}_{\bm\beta,r}}(\beta_s+jp)_{r}}.
\end{equation*}
By (2), we have $\#\mathcal{P}_{\bm\alpha,r}=\#\mathcal{P}_{\bm\beta,r}$ and thus, $$\mathcal{Q}_{\bm\alpha+\textbf{jp},\bm\beta+\textbf{jp}}(r)=\lambda_j\cdot\xi,\text{ where }\quad\xi=\frac{\underset{s\in\mathcal{P}_{\bm\alpha,r}}{\prod}\left(\underset{t\neq p\mathfrak{D}_p(\alpha_s)-\alpha_s}{\prod\limits_{t=0}\limits^{r-1}(\alpha_s+jp+t)}\right)\underset{s\in\mathcal{C}_{\bm\alpha,r}}{\prod}(\alpha_s+jp)_{r}}{\underset{s\in\mathcal{P}_{\bm\beta,r}}{\prod}\left(\underset{t\neq p\mathfrak{D}_p(\beta_s)-\beta_s}{\prod\limits_{t=0}\limits^{r-1}(\beta_s+jp+t)}\right)\underset{s\in\mathcal{C}_{\bm\beta,r}}{\prod}(\beta_s+jp)_{r}}.$$

By Lemma~\ref{lemm_lambda_0}, we know that $\mathcal{Q}_{\bm\alpha,\bm\beta}(jp)=\mathcal{Q}_{\mathfrak{D}_p(\bm\alpha),\mathfrak{D}_p(\bm\beta)}(j)\omega$, where $\omega\in\mathbb{Z}_{(p)}^*$ and $\omega\equiv1\bmod p$. Whence,
\begin{equation*}\label{eq_jj}
\mathcal{Q}_{\bm\alpha,\bm\beta}(jp+r)=\mathcal{Q}_{\mathfrak{D}_p(\bm\alpha),\mathfrak{D}_p(\bm\beta)}(j)\cdot\omega\cdot\lambda_j\cdot\xi.
\end{equation*}
We put $\nu=\omega\xi$. We now prove that $\nu\in\mathbb{Z}_{(p)}^*$ and that $\nu\bmod p=(\tau\bmod p)(\theta\bmod p)$.
Since $r<p$, it is clear that
\begin{equation}\label{eq_congruences}
\underset{t\neq p\mathfrak{D}_p(\alpha_s)-\alpha_s}{\prod\limits_{t=0}\limits^{r-1}}(\alpha_s+jp+t)\bmod p\equiv\underset{t\neq p\mathfrak{D}_p(\alpha_s)-\alpha_s}{\prod\limits_{t=0}\limits^{r-1}}(\alpha_s+t)\bmod p\neq0
\end{equation}
and that
\begin{equation}\label{eq_congruences1}
\underset{t\neq p\mathfrak{D}_p(\beta_s)-\beta_s}{\prod\limits_{t=0}\limits^{r-1}}(\beta_s+jp+t)\bmod p\equiv\underset{t\neq p\mathfrak{D}_p(\beta_s)-\beta_s}{\prod\limits_{t=0}\limits^{r-1}}(\beta_s+t)\bmod p\neq0.
\end{equation}
So, it follows from Equations \eqref{eq_congruences} and \eqref{eq_congruences1} that 
\begin{equation*}\label{eq_t}
\underset{s\in\mathcal{P}_{\bm\alpha,r}}{\prod}\left(\underset{t\neq p\mathfrak{D}_p(\alpha_s)-\alpha_s}{\prod\limits_{t=0}\limits^{r-1}}(\alpha_s+jp+t)\right)\bigg/\underset{s\in\mathcal{P}_{\bm\beta,r}}{\prod}\left(\underset{t\neq p\mathfrak{D}_p(\beta_s)-\beta_s}{\prod\limits_{t=0}\limits^{r-1}}(\beta_s+jp+t)\right)\equiv\tau\bmod p.
\end{equation*}
Furthermore, it is not hard to see that, $s\in\mathcal{P}_{\alpha,r}$ if and only if $(\alpha_s+jp)_{r}\in p\mathbb{Z}_{(p)}$ and that, $s\in\mathcal{P}_{\bm\beta,r}$ if and only if $(\beta_s+jp)_{r}\in p\mathbb{Z}_{(p)}$. For this reason, the $p$-adic valuation of $\underset{s\in\mathcal{C}_{\alpha,r}}{\prod}(\alpha_s+jp)_{r}$ and $\underset{s\in\mathcal{C}_{\bm\beta,r}}{\prod}(\beta_s+jp)_{r}$ is zero. So

\begin{equation*}\label{eq_jp}
\underset{s\in\mathcal{C}_{\bm\alpha,r}}{\prod}(\alpha_s+jp)_{r}\bigg/\underset{s\in\mathcal{C}_{\bm\beta,r}}{\prod}(\beta_s+jp)_{r}\equiv\theta\bmod p.
\end{equation*}

Consequently, $\xi\in\mathbb{Z}_{(p)}^*$ and $\xi\bmod p=(\tau\bmod p)(\theta\bmod p)$. Finally, we know that $\omega\in\mathbb{Z}_{(p)}^{*}$ and that $\omega\bmod p=1$. So,  $\nu\in\mathbb{Z}_{(p)}^*$ and $\nu\bmod p=(\tau\bmod p)(\theta\bmod p)$. 

(4). Let $j\geq1$ be an integer. From (3), we have  $\mathcal{Q}_{\bm\alpha,\bm\beta}(jp+r)=\mathcal{Q}_{\mathfrak{D}_p(\bm\alpha),\mathfrak{D}_p(\bm\beta),p}(j)\lambda_j\nu$, where $\nu\in\mathbb{Z}_{(p)}^*$. So $v_p(\mathcal{Q}_{\bm\alpha,\bm\beta}(jp+r))=v_p(\mathcal{Q}_{\mathfrak{D}_p(\bm\alpha),\mathfrak{D}_p(\bm\beta),p}(j)\lambda_j)$.
But, by assumption, we know that $v_p(\mathcal{Q}_{\bm\alpha,\bm\beta}(jp+r))\geq0$. Whence, $v_p(\mathcal{Q}_{\mathfrak{D}_p(\bm\alpha),\mathfrak{D}_p(\bm\beta),p}(j)\lambda_j)\geq0.$

Now, it is clear that \begin{equation}\label{eq_jjj}
\lambda_0\left(\frac{\underset{s\in\mathcal{C}_{\bm\alpha,r}}{\prod}\mathfrak{D}_p(\alpha_s)_j {\underset{s\in\mathcal{P}_{\bm\alpha,r}}{\prod}(\mathfrak{D}_p(\alpha_s)+1)_j}}{\underset{s\in\mathcal{C}_{\bm\beta,r}}{\prod}\mathfrak{D}_p(\beta_s)_j \underset{s\in\mathcal{P}_{\bm\beta,r}}{\prod}(\mathfrak{D}_p(\beta_s)+1)_j}\right)=\mathcal{Q}_{\mathfrak{D}_p(\bm\alpha),\mathfrak{D}_p(\bm\beta)}(j)\lambda_j.
\end{equation}
By (1), we know that $v_p(\lambda_0)=0$. So, $$v_p\left(\frac{\underset{s\in\mathcal{C}_{\bm\alpha,r}}{\prod}\mathfrak{D}_p(\alpha_s)_j {\underset{s\in\mathcal{P}_{\bm\alpha,r}}{\prod}(\mathfrak{D}_p(\alpha_s)+1)_j}}{\underset{s\in\mathcal{C}_{\bm\beta,r}}{\prod}\mathfrak{D}_p(\beta_s)_j \underset{s\in\mathcal{P}_{\bm\beta,r}}{\prod}(\mathfrak{D}_p(\beta_s)+1)_j}\right)=v_p(\mathcal{Q}_{\mathfrak{D}_p(\bm\alpha),\mathfrak{D}_p(\bm\beta),p}(j)\lambda_j)\geq0.$$

(5). Let $j\geq1$ be an integer. From (3) and (4) we get $$\mathcal{Q}_{\bm\alpha,\bm\beta}(jp+r)\bmod p=(\mathcal{Q}_{\mathfrak{D}_p(\bm\alpha),\mathfrak{D}_p(\bm\beta),p}(j)\lambda_j \bmod p)(\tau\bmod p)(\theta\bmod p).$$
So, from Equation~\eqref{eq_jjj}, we get 
\begin{align*}
&\mathcal{Q}_{\bm\alpha,\bm\beta}(jp+r)\bmod p=\\
&\left(\frac{\underset{s\in\mathcal{C}_{\bm\alpha,r}}{\prod}\mathfrak{D}_p(\alpha_s)_j {\underset{s\in\mathcal{P}_{\bm\alpha,r}}{\prod}(\mathfrak{D}_p(\alpha_s)+1)_j}}{\underset{s\in\mathcal{C}_{\bm\beta,r}}{\prod}\mathfrak{D}_p(\beta_s)_j \underset{s\in\mathcal{P}_{\bm\beta,r}}{\prod}(\mathfrak{D}_p(\beta_s)+1)_j}\bmod p\right)(\lambda_0\bmod p)(\tau\bmod p)(\theta\bmod p).
\end{align*}
From (2), we conclude that $\mathcal{Q}_{\bm\alpha,\bm\beta}(r)\bmod p= (\lambda_0\bmod p)(\tau\bmod p)(\theta\bmod p)$. Consequently, $$\mathcal{Q}_{\bm\alpha,\bm\beta}(jp+r)\bmod p=\left(\frac{\underset{s\in\mathcal{C}_{\bm\alpha,r}}{\prod}\mathfrak{D}_p(\alpha_s)_j {\underset{s\in\mathcal{P}_{\bm\alpha,r}}{\prod}(\mathfrak{D}_p(\alpha_s)+1)_j}}{\underset{s\in\mathcal{C}_{\bm\beta,r}}{\prod}\mathfrak{D}_p(\beta_s)_j \underset{s\in\mathcal{P}_{\bm\beta,r}}{\prod}(\mathfrak{D}_p(\beta_s)+1)_j}\bmod p\right)(\mathcal{Q}_{\bm\alpha,\bm\beta}(r)\bmod p).$$

\end{proof}

\subsection{Proof of Lemma~\ref{lemm_first_step}}
 Let us write $E_{\bm\alpha,\bm\beta,p}=\{e_0, e_1,\ldots, e_k\}$, where $e_i<e_{i+1}$ for all $i\in\{0,\ldots,k\}$ and $e_0=0$. We set $e_{k+1}=p$ and we also write $S_{\bm\alpha,\bm\beta,p}=\{0,r_1,\ldots, r_t\}$. Recall that $f(z)$ is the power series $\sum_{j\geq0}\mathcal{Q}_{\bm\alpha,\bm\beta}(j)z^j\in\mathbb{Z}_{(p)}[[z]]$. 
By hypotheses, we know that $\bm\alpha$ and $\bm\beta$ belong to $\mathbb{Z}_{(p)}^n$. So, by Lemma~\ref{lemm_cartier_operator}, we have $$f(z)=P_0(z)\Lambda_0(f)^p+P_{r_1}(z)\Lambda_{r_1}(f)^p+\cdots+P_{r_t}(z)\Lambda_{r_t}(f)^p\bmod p,$$
where, for all $r_i\in S_{\bm\alpha,\bm\beta,p}$, $P_{r_i}(z)=\sum_{s=r_i}^{e_j-1}\left(\frac{\mathcal{Q}_{\bm\alpha,\bm\beta}(s)}{\mathcal{Q}_{\bm\alpha,\bm\beta}(r_i)}\bmod p\right)z^s$ with $e_{j-1}=r_i$. By definition, $$\Lambda_{r_i}(f)=\sum_{j\geq0}\mathcal{Q}_{\bm\alpha,\bm\beta}(jp+r_i)z^j.$$ 

Now, by assumption, we know that $\mathfrak{D}_p(\bm\alpha)$, $\mathfrak{D}_p(\bm\beta)$ belong to $(\mathbb{Z}_{(p)}^*)^n$. Further, for all integers $j\geq0$, $v_p(\mathcal{Q}_{\bm\alpha,\bm\beta}(j))\geq0$ because $f(z)\in\mathbb{Z}_{(p)}[[z]]$ and, by definition, $v_p(\mathcal{Q}_{\bm\alpha,\bm\beta}(r_i))=0$ for all $r_i\in S_{\bm\alpha,\bm\beta,p}$. Therefore, by (4) of Lemma~\ref{lemm_ultra_tec}, we conclude that, for all $r_i\in S_{\bm\alpha,\bm\beta,p}$,  $$f_{1,r_i}(z)={}_nF_{n-1}(\bm\alpha_{1,r_i},\bm\beta_{1,r_i};z)=\sum_{m\geq0}\left(\frac{\underset{s\in\mathcal{C}_{\bm\alpha,r_i}}{\prod}\mathfrak{D}_p(\alpha_s)_m {\underset{s\in\mathcal{P}_{\bm\alpha,r_i}}{\prod}(\mathfrak{D}_p(\alpha_s)+1)_m}}{\underset{s\in\mathcal{C}_{\bm\beta,r_i}}{\prod}\mathfrak{D}_p(\beta_s)_m \underset{s\in\mathcal{P}_{\bm\beta,r_i}}{\prod}(\mathfrak{D}_p(\beta_s)+1)_m}\right)z^m$$
belongs to $1+z\mathbb{Z}_{(p)}[[z]].$ Furthermore, we deduce from (5) of Lemma~\ref{lemm_ultra_tec} that, for all $i\in\{0,\ldots,t\}$, 
$$\Lambda_{r_i}(f)\bmod p=\mathcal{Q}_{\bm\alpha,\bm\beta}(r_i)\bmod p\cdot f_{1,r_i}(z)\bmod p.$$
Therefore, $$f(z)\equiv Q_0(z)f_{1,0}^p+Q_{r_1}f_{1,r_1}(z)^p+\cdots+Q_{r_t}(z)f_{1,r_t}(z)^p\bmod p,$$
where $Q_{r_i}(z)=\mathcal{Q}_{\bm\alpha,\bm\beta}(r_i)P_{r_i}(z).$

$\hfill\square$

\section{Constructing the polynomial $P_p(Y)$}\label{sec_construction}
In this section we show how to obtain the polynomial $P_p(Y)$.  Let $\bm\alpha=(\alpha_1,\ldots,\alpha_n)$, $\bm\beta=(\beta_1,\ldots,\beta_{n-1},1)$ be in $(\mathbb{Q}\cap(0,1])^n$ and let $p$ be a prime number such that $p>2d_{\bm\alpha,\bm\beta}$ and $f(z):={}_nF_{n-1}(\bm\alpha,\bm\beta;z)$ belongs to $\mathbb{Z}_{(p)}[[z]]$ and let $l$ be the order of $p$ in $(\mathbb{Z}/d_{\bm\alpha,\bm\beta}\mathbb{Z})^*$. 
As $p>2d_{\bm\alpha,\bm\beta}$ then $\bm\alpha$, $\bm\beta$ belong to $(\mathbb{Z}_{(p)}^*)^n$ and, by Remark~\ref{rema_p_l}, $(\bm\alpha,\bm\beta)$ satisfies the $\textbf{P}_{p,l}$ property.  Then, by Proposition~\ref{prop_third_step}, for every $r\in S_{\mathfrak{D}_p^{l-1}(\bm\alpha),\mathfrak{D}_p^{l-1}(\bm\beta),p}$, $f_{l,r}\in1+z\mathbb{Z}_{(p)}[[z]]$ and 
\begin{equation}\label{eq_sys_1}
f_{l,r}\equiv\sum_{j\in S_{\mathfrak{D}_p^{l-1}(\bm\alpha),\mathfrak{D}_p^{l-1}(\bm\beta),p}}Q_{r,j}(z)f_{l,j}^{p^l}\bmod p,
\end{equation}
where, for every $j\in S_{\mathfrak{D}_p^{l-1}(\bm\alpha),\mathfrak{D}_p^{l-1}(\bm\beta),p}$, $Q_{i,j}(z)$ belongs to $\mathbb{Z}_{(p)}[z]$ and has degree less than $p^l$.  By following the proof of Theorem~\ref{theo_main}, the polynomial $P_p(Y)$ results from applying Proposition~\ref{lemm_system} to the system~\eqref{eq_sys_1}. Thus $P_p(Y)$ is obtained by subsequent elimination of the series $f_{l,j}^{p^l}$ for $j\in S_{\mathfrak{D}_p^{l-1}(\bm\alpha),\mathfrak{D}_p^{l-1}(\bm\beta),p}\setminus\{0\}$\footnote{Remember that, for every integer $a\geq1$, $0\in S_{\mathfrak{D}_p^{a-1}(\bm\alpha),\mathfrak{D}_p^{a-1}(\bm\beta),p}$. Note that $f_{l,0}=f.$}. It follows from the proof of Proposition~\ref{lemm_system} that this subsequent elimination is explicit once the polynomials $Q_{r,j}$ are known. Lemma~\ref{lemm_explicit_poly} gives a formula for each polynomial $Q_{r,j}$. This formula is given recursively and is constructed from the polynomials given by the conclusion of Lemma~\ref{lemm_first_step}. In order to state the lemma, we introduce the following polynomials. Let $r$ be in $S_{\mathfrak{D}_p^{l-1}(\bm\alpha),\mathfrak{D}_p^{l-1}(\bm\beta),p}$ and let us consider the vectors $\bm\alpha_{l,r}=\bm\omega=(\omega_1,\ldots,\omega_n)$ and $\bm\beta_{l,r}=\bm\eta=(\eta_1,\ldots,\eta_n).$\footnote{For every integer $a\geq1$ and $r\in\{0,\ldots, p-1\}$, the definition of the vectors $\bm\alpha_{a,r}$, $\bm\beta_{a,r}$ was given at the beginning of Section~\ref{sec_proof_theo_main}.}.  For every $j\in S_{\bm\omega,\bm\eta,p}$, we set $$T_{r,j}(z)=\sum_{s=j}^{j'-1}(\mathcal{Q}_{\bm\omega,\bm\eta}(s)\bmod p)z^s,$$
where $j'$ is defined as follows.  If $j\neq\max E_{\bm\omega,\bm\eta,p}$ then $j'$ is the element in $E_{\bm\omega,\bm\eta,p}$ such that $(j,j')\cap E_{\bm\omega,\bm\eta,p}=\emptyset$ or otherwise, $j'=p$. 

Let $k$ be in $\{2,\ldots, l\}$.  For every $j\in S_{\mathfrak{D}_p^{k-1}(\bm\omega),{\mathfrak{D}_p^{k-1}(\bm\eta),p}}$ and $b\in S_{\mathfrak{D}_p^{k-2}(\bm\omega),{\mathfrak{D}_p^{k-2}(\bm\eta),p}}$ , we set $$T^{(k-1,b)}_{r,j}=\sum_{s=\tau(j)}^{j'-1}(\mathcal{Q}_{\bm\omega_{k-1,b},\bm\eta_{k-1,b}}(s)\bmod p)z^s,$$
where $\tau:S_{\mathfrak{D}_p^{k-1}(\bm\omega),{\mathfrak{D}_p^{k-1}(\bm\eta),p}}\rightarrow S_{\bm\omega_{k-1,b},\bm\eta_{k-1,b},p}$ is the function given by Lemma~\ref{lemm_aux} and $j'$ is defined as follows. If $\tau(j)\neq\max E_{\bm\omega_{k-1,b},\bm\eta_{k-1,b},p}$ then $j'$ is the element in $E_{\bm\omega_{k-1,b},\bm\eta_{k-1,b},p}$ such that $(\tau(j),j')\cap E_{\bm\omega_{k-1,b},\bm\eta_{k-1,b},p}=\emptyset$ or otherwise $j'=p$.

We are now ready to state Lemma~\ref{lemm_explicit_poly}.

\begin{lemma}\label{lemm_explicit_poly}
Let the assumptions be as in Proposition~\ref{prop_third_step}. If $l\geq2$ then, for every $r, j\in S_{\mathfrak{D}_p^{l-1}(\bm\alpha),\mathfrak{D}_p^{l-1}(\bm\beta),p}$, 
\begin{align*}
&Q_{r,j}=\\
&\left(\sum_{j_{l-1}\in S_{\mathfrak{D}^{l-2}_p(\bm\omega),\mathfrak{D}^{l-2}_p(\bm\eta),p}}\cdots\sum_{j_1\in S_{\bm\omega,\bm\eta,p}}T_{r,j_1}(T^{(1,j_1)}_{r,j_2})^p\cdots (T^{(l-2,j_{l-2})}_{r,j_{l-1}})^{p^{l-2}}\right) (T^{(l-1,j_{l-1})}_{r,j})^{p^{l-1}}.
\end{align*}
If $l=1$ then, for every $r, j\in S_{\bm\alpha,\bm\beta,p}$, $$Q_{r,j}=T_{r,\tau(j)},$$ where $\tau: S_{\bm\alpha,\bm\beta}\rightarrow S_{\bm\omega,\bm\eta,p}$ is the bijective map given by A) of Lemma~\ref{lemm_aux}.

\end{lemma}

\begin{proof}
Let $r$ be in $S_{\mathfrak{D}_p^{l-1}(\bm\alpha),\mathfrak{D}_p^{l-1}(\bm\beta),p}$. Let $F$ be the hypergeometric series with parameters $\bm\alpha_{l,r}=\bm\omega=(\omega_1,\ldots,\omega_n)$ and $\bm\beta_{l,r}=\bm\eta=(\eta_1,\ldots,\eta_n).$ Then $F=f_{l,r}$. For every $0\leq a< l$ and $j\in S_{\mathfrak{D}^a_p(\bm\omega),\mathfrak{D}^a_p(\bm\eta),p}$, we put $F_{a+1,j}={}_nF_{n-1}(\bm\omega_{a+1,j},\bm\eta_{a+1,j};z)$. That is, $$F_{a+1,j}=\sum_{m\geq0}\left(\frac{\underset{s\in\mathcal{C}_{\mathfrak{D}_p^a(\bm\omega),j}}{\prod}\mathfrak{D}_p^{a+1}(\omega_s)_m {\underset{s\in\mathcal{P}_{\mathfrak{D}_p^a(\bm\omega),j}}{\prod}(\mathfrak{D}_p^{a+1}(\omega_s)+1)_m}}{\underset{s\in\mathcal{C}_{\mathfrak{D}_p^a(\bm\eta),j}}{\prod}\mathfrak{D}_p^{a+1}(\eta_s)_m \underset{s\in\mathcal{P}_{\mathfrak{D}_p^a(\bm\eta),j}}{\prod}(\mathfrak{D}_p^{a+1}(\eta_s)+1)_m}\right)z^m.$$
As $(\bm\alpha,\bm\beta)$ satisfies the $\textbf{P}_{p,l}$ property then, by (2) of Remark~\ref{rema_p_l_i}, $(\bm\omega,\bm\eta)$ satisfies the $\textbf{P}_{p,l}$ property and $l$ is the order of $p$ in $(\mathbb{Z}/d_{\bm\omega,\bm\eta}\mathbb{Z})^*$. Thus, by Lemma~\ref{lemm_second_step}, $F_{a+1,j}\in 1+z\mathbb{Z}_{(p)}[[z]]$ for all $0\leq a< l$ and $j\in S_{\mathfrak{D}^a_p(\bm\omega),\mathfrak{D}^a_p(\bm\eta),p}$.

 We first prove by induction on $k\in\{1,\ldots, l\}$ that 

 \begin{equation}\label{eq_hyp_F}
F=\sum_{j\in S_{\mathfrak{D}^{k-1}_p(\bm\omega),\mathfrak{D}^{k-1}_p(\bm\eta),p} } Q_{r,j}^{(k-1)}F_{k,j}^{p^k},
\end{equation}
where $$Q_{r,j}^{(k-1)}=\sum_{j_{k-1}\in S_{\mathfrak{D}^{k-2}_p(\bm\omega),\mathfrak{D}^{k-2}_p(\bm\eta),p}}\cdots\sum_{j_1\in S_{\bm\omega,\bm\eta,p}}T_{r,j_1}(T^{(1,j_1)}_{r,j_2})^p\cdots(T^{(k-1,j_{k-1})}_{r,j})^{p^{k-1}}$$

For $k=1$, according to Lemma~\ref{lemm_first_step}, we have $$F=\sum_{j\in S_{\bm\omega,\bm\eta,p}}T_{r,j}(z)F_{1,j}^p.$$
We now suppose that for some $k\in\{1,\ldots l-1\}$ Equality~\eqref{eq_hyp_F} holds. We are going to see that Equation~\ref{eq_hyp_F} also holds for $k+1$. Let $j$ be in $S_{\mathfrak{D}^{k-1}_p(\bm\omega),\mathfrak{D}^{k-1}_p(\bm\eta),p}$. By definition, $F_{k,j}$ is the hypergeometric series ${}_nF_{n-1}(\bm\omega_{k,j},\bm\eta_{k,j};z)$. Further, we know that $(\bm\omega,\bm\eta)$ satisfies the $\textbf{P}_{p,l}$ property and thus, by Remark~\ref{rema_d_p_h}, $(\bm\omega_{k,j},\bm\eta_{k,j})$ satisfies the $\textbf{P}_{p,l'}$ property, where $l'$ is the order $p$ in $(\mathbb{Z}/d_{\bm\omega_{k,j},\bm\eta_{k,j}}\mathbb{Z})^*$. So, by applying Lemma~\ref{lemm_first_step} to $F_{k,j}$, we get

$$F_{k,j}=\sum_{\gamma\in S_{\bm\omega_{k,j},\bm\eta_{k,j}}}Q_{\gamma}F_{\gamma}^p,$$
where $$F_{\gamma}=\sum_{m\geq0}\left(\frac{\underset{s\in\mathcal{C}_{\bm\omega_{k,j},\gamma}}{\prod}\mathfrak{D}_p(\omega_{s,k,j})_m {\underset{s\in\mathcal{P}_{\bm\omega_{k,j},\gamma}}{\prod}(\mathfrak{D}_p(\omega_{s,k,j})+1)_m}}{\underset{s\in\mathcal{C}_{\bm\eta_{k,j},\gamma}}{\prod}\mathfrak{D}_p(\eta_{s,k,j})_m \underset{s\in\mathcal{P}_{\bm\eta_{k,j},\gamma}}{\prod}(\mathfrak{D}_p(\eta_{s,k,j})+1)_m}\right)z^m\text{, } Q_{\gamma}=\sum_{s=\gamma}^{\gamma'-1}\mathcal{Q}_{\bm\omega_{k,j},\bm\eta_{k,j}}(s)z^s$$
and $\gamma'$ is defined as follows. If $\gamma\neq\max E_{\bm\omega_{k,j},\bm\eta_{k,j}}$ then $\gamma'$ is the element in $E_{\bm\omega_{k,j},\bm\eta_{k,j}}$ such that $(\gamma,\gamma')\cap E_{\bm\omega_{k,j},\bm\eta_{k,j}}=\emptyset$ or otherwise, $\gamma'=p$. 

We now prove that $F_{\gamma}= F_{k+1,\sigma(\gamma)}$, where $\sigma:S_{\bm\omega_{k,j},\bm\eta_{k,j},p}\rightarrow S_{\mathfrak{D}_p^k(\bm\omega),\mathfrak{D}_p^k(\bm\eta),p}$ is the function given by Lemma~\ref{lemm_aux}\footnote{Note that we can apply  Lemma~\ref{lemm_aux} to $(\bm\omega,\bm\eta)$ because $(\bm\omega,\bm\eta)$ satisfies the $\textbf{P}_{p,l}$ property.}. For this purpose, we first prove that, for all $s\in\{1,\ldots, n\}$, $\mathfrak{D}_p(\omega_{s,k,j})=\mathfrak{D}_p^{k+1}(\omega_s)$. By definition $\omega_{s,k,j}=\mathfrak{D}^k_p(\omega_s)$ if $s\in\mathcal{C}_{\mathfrak{D}_p^{k-1}(\bm\omega),j}$ or $\omega_{s,k,j}=\mathfrak{D}^k_p(\omega_s)+1$ if $s\in\mathcal{P}_{\mathfrak{D}_p^{k-1}(\bm\omega),j}$. It is clear that in the first case $\mathfrak{D}_p(\omega_{s,k,j})=\mathfrak{D}_p^{k+1}(\omega_s)$. Suppose now that  $\omega_{s,k,j}=\mathfrak{D}^k_p(\omega_s)+1$. Again, by definition $\omega_s=\alpha_{s,l,i}$ and thus $\omega_s=\alpha_s$ if  $s\in\mathcal{C}_{\mathfrak{D}_p^{l-1}(\bm\alpha),i}$ or $\omega_s=\alpha_s+1$ if $s\in\mathcal{P}_{\mathfrak{D}_p^{l-1}(\bm\alpha),i}$. By assumption, $\alpha_s\in\mathbb{Z}_{(p)}^*$ and thus, by (1) of Remark~\ref{rema_d_p_1}, $\mathfrak{D}_p(\omega_s)=\mathfrak{D}_p(\alpha_s)$. In addition, for all integers $1\leq r\leq l$, $\mathfrak{D}^r_p(\omega_s)\in\mathbb{Z}_{(p)}^{*}$ given that  $\mathfrak{D}_p(\omega_s)=\mathfrak{D}_p(\alpha_s)$ and $(\bm\alpha,\bm\beta)$ satisfies the $\textbf{P}_{p,l}$ property. Hence, according to (1) of Remark~\ref{rema_d_p_1} again, $\mathfrak{D}_p(\omega_{s,k,j})=\mathfrak{D}_p(\mathfrak{D}^k_p(\omega_s)+1)=\mathfrak{D}_p^{k+1}(\omega_s).$ In a similar way we show that,  for all $s\in\{1,\ldots, n\}$, $\mathfrak{D}_p(\eta_{s,k,j})=\mathfrak{D}_p^{k+1}(\eta_s)$. Now, by (ii) of Lemma~\ref{lemm_aux}, we have $\mathcal{C}_{\bm\omega_{k,j},\gamma}=\mathcal{C}_{\mathfrak{D}_p^k(\bm\omega),\sigma(\gamma)}$ and $\mathcal{P}_{\bm\omega_{k,j},\gamma}=\mathcal{P}_{\mathfrak{D}_p^k(\bm\omega),\sigma(\gamma)}$. Again,  by (ii) of Lemma~\ref{lemm_aux}, we have $\mathcal{C}_{\bm\eta_{k,j},\gamma}=\mathcal{C}_{\mathfrak{D}_p^k(\bm\eta),\sigma(\gamma)}$ and $\mathcal{P}_{\bm\eta_{k,j},\gamma}=\mathcal{P}_{\mathfrak{D}_p^k(\bm\eta),\sigma(\gamma)}$. Consequently, $F_{\gamma}= F_{k+1,\sigma(\gamma)}$. Finally, it is clear that $Q_{\gamma}=T_{r,\sigma(\gamma)}^{(k,j)}$ because $\tau(\sigma(\gamma)=\gamma$. Therefore,
$$F_{k,j}=\sum_{i\in S_{\mathfrak{D}^k_p(\bm\omega),\mathfrak{D}^k_p(\bm\eta),p}}T_{r,i}^{(k,j)}F_{k+1,i}^p.$$
 Thus, from induction hypothesis and the previous equality, we obtain
  \begin{equation*}
 \begin{split}
&F=\\
&\sum_{j\in S_{\mathfrak{D}^{k}_p(\bm\omega),\mathfrak{D}^{k}_p(\bm\eta),p} }\left(\sum_{j_{k}\in S_{\mathfrak{D}^{k-1}_p(\bm\omega),\mathfrak{D}^{k-1}_p(\bm\eta),p}}\cdots\sum_{j_1\in S_{\bm\omega,\bm\eta,p}}T_{r,j_1}(T^{(1,j_1)}_{r,j_2})^p\cdots(T^{(k,j_{k})}_{r,j})^{p^{k}}\right)F_{k+1,j}^{p^{k+1}},
\end{split}
\end{equation*}
which shows that Equation~\ref{eq_hyp_F} is true for $k+1$.  

So, by induction we conclude that Equation~\eqref{eq_hyp_F} holds for all $1\leq k\leq l$. 

Suppose that $l\geq2$. We will show that, for every $j\in S_{\mathfrak{D}^{l-1}_p(\bm\omega),\mathfrak{D}^{l-1}_p(\bm\eta),p}$, $F_{l,j}=f_{l,j}$. By (1) of Remark~\ref{rema_p_l_i}, we have $\mathfrak{D}_p^{l-1}(\bm\omega)=\mathfrak{D}_p^{l-1}(\bm\alpha)$ and $\mathfrak{D}_p^{l-1}(\bm\eta)=\mathfrak{D}_p^{l-1}(\bm\beta)$. Thus, $\mathcal{C}_{\mathfrak{D}_p^{l-1}(\bm\omega),j}=\mathcal{C}_{\mathfrak{D}_p^{l-1}(\bm\alpha),j}$, $\mathcal{P}_{\mathfrak{D}_p^{l-1}(\bm\omega),j}=\mathcal{P}_{\mathfrak{D}_p^{l-1}(\bm\alpha),j}$, $\mathcal{C}_{\mathfrak{D}_p^{l-1}(\bm\eta),j}=\mathcal{C}_{\mathfrak{D}_p^{l-1}(\bm\beta),j}$, and $\mathcal{P}_{\mathfrak{D}_p^{l-1}(\bm\eta),j}=\mathcal{P}_{\mathfrak{D}_p^{l-1}(\bm\beta),j}$. Furthermore, we also have $S_{\mathfrak{D}^{l-1}_p(\bm\omega),\mathfrak{D}^{l-1}_p(\bm\eta),p}=S_{\mathfrak{D}^{l-1}_p(\bm\alpha),\mathfrak{D}^{l-1}_p(\bm\beta),p}$.  By (1) of Remark~\ref{rema_p_l_i} again, we have $\mathfrak{D}_p^{l}(\bm\omega)=\mathfrak{D}_p^{l}(\bm\alpha)$ and $\mathfrak{D}_p^{l}(\bm\eta)=\mathfrak{D}_p^{l}(\bm\beta)$. As $p^{l}\equiv 1\bmod d_{\bm\alpha,\bm\beta}$ and, by assumption, $\bm\alpha$, $\bm\beta\in(\mathbb{Z}_{(p)}^*)^n$ then Lemma~\ref{cyclic} implies that $\mathfrak{D}_p^{l}(\bm\alpha)=\bm\alpha$ and $\mathfrak{D}_p^{l}(\bm\beta)=\bm\beta$. So,  $\mathfrak{D}_p^{l}(\bm\omega)=\bm\alpha$ and $\mathfrak{D}_p^{l}(\bm\eta)=\bm\beta$.  Therefore, for every $j\in S_{\mathfrak{D}^{l-1}_p(\bm\omega),\mathfrak{D}^{l-1}_p(\bm\eta),p}$, $F_{l,j}=f_{l,j}$. Consequently, it follows from Equation~\eqref{eq_hyp_F} that $$F=\sum_{j\in S_{\mathfrak{D}_p^{l-1}(\bm\alpha),\mathfrak{D}_p^{l-1}(\bm\beta),p}}Q_{r,j}(z)f_{l,j}^{p^l}\bmod p,$$
where 
\begin{align*}
&Q_{r,j}=\\
&\left(\sum_{j_{l-1}\in S_{\mathfrak{D}^{l-2}_p(\bm\omega),\mathfrak{D}^{l-2}_p(\bm\eta),p}}\cdots\sum_{j_1\in S_{\bm\omega,\bm\eta,p}}T_{r,j_1}(T^{(1,j_1)}_{r,j_2})^p\cdots (T^{(l-2,j_{l-2})}_{r,j_{l-1}})^{p^{l-2}}\right) (T^{(l-1,j_{l-1})}_{r,j})^{p^{l-1}}.
\end{align*}
This completes the case $l\geq2$ because $F=f_{l,r}$.

Suppose now that $l=1$. As $p\equiv 1\bmod d_{\bm\alpha,\bm\beta}$ and, by assumption, $\bm\alpha$, $\bm\beta\in(\mathbb{Z}_{(p)}^*)^n$ then Lemma~\ref{cyclic} implies that,  $\mathfrak{D}_p(\bm\alpha)=\bm\alpha$ and $\mathfrak{D}_p(\bm\beta)=\bm\beta$. By assumption again, $(\bm\alpha,\bm\beta)$ satisfies the $\textbf{P}_{p,1}$ property. Then, by Lemma~\ref{lemm_aux}, we have $\sigma:S_{\bm\alpha_{1,r},\bm\beta_{1,l},p}\rightarrow S_{\bm\alpha,\bm\beta,p}$. But definition, $\bm\alpha_{1,r}=\bm\omega$ and $\bm\beta_{1,r}=\bm\eta.$ So, $\sigma:S_{\bm\omega,\bm\eta,p}\rightarrow S_{\bm\alpha,\bm\beta,p}$. We are going to see that, for all $j\in S_{\bm\omega,\bm\eta,p}$, $F_{1,j}=f_{1,\sigma(j)}$. By B) of Lemma~\ref{lemm_aux}, we get $\mathcal{C}_{\bm\omega,j}=\mathcal{C}_{\bm\alpha,\sigma(j)}$,  $\mathcal{P}_{\bm\omega,j}=\mathcal{P}_{\bm\alpha,\sigma(j)}$,  $\mathcal{C}_{\bm\eta,j}=\mathcal{C}_{\bm\beta,\sigma(j)}$, and $\mathcal{P}_{\bm\eta,j}=\mathcal{C}_{\bm\alpha,\sigma(j)}$. By (1) of Remark~\ref{rema_p_l_i}, we have $\mathfrak{D}_p(\bm\omega)=\mathfrak{D}_p(\bm\alpha)$ and $\mathfrak{D}_p(\bm\eta)=\mathfrak{D}_p(\bm\beta)$. But we know that $\mathfrak{D}_p(\bm\alpha)=\bm\alpha$ and $\mathfrak{D}_p(\bm\beta)=\bm\beta$. So $\mathfrak{D}_p(\bm\omega)=\bm\alpha$ and $\mathfrak{D}_p(\bm\eta)=\bm\beta$. Consequently, for all $j\in S_{\bm\omega,\bm\eta,p}$, $F_{1,j}=f_{1,\sigma(j)}$. Since $\sigma$ is a bijective map, we deduce from Equation~\eqref{eq_hyp_F} that $$F=\sum_{j\in S_{\bm\alpha,\bm\beta,p}}Q_{r,j}(z)f_{1,j}^p,$$
where $Q_{r,j}=T_{r,\tau(j)}$ with $\tau$ the inverse of $\sigma$. This completes the case $l=1$ because $F=f_{1,r}$.
 \end{proof}

 As an application of Lemma~\ref{lemm_explicit_poly}, we will give a formula for each rational function appearing in Equations~\eqref{eq_ex_1} and \eqref{eq_ex_3}. 
 \begin{theo}\label{theo_cons}
Let $\bm\alpha=(\alpha_1,\ldots,\alpha_n)$, $\bm\beta=(\beta_1,\ldots,\beta_{n-1},1)$ be in $(\mathbb{Q}\cap(0,1])^n$ and let $p$ be a prime number such that $p>2d_{\bm\alpha,\bm\beta}$ and $f(z):={}_nF_{n-1}(\bm\alpha,\bm\beta;z)$ belongs to $\mathbb{Z}_{(p)}[[z]]$. Suppose that $\# S_{\bm\alpha,\bm\beta,p}=2$.  We write $S_{\bm\alpha,\bm\beta,p}=\{0,r\}$, $E_{\bm\alpha,\bm\beta,p}=\{e_0,e_1,\ldots,e_k\}$ with $e_0=0$ and $e_i<e_{i+1}$ for all $i\in\{0,\ldots, k\}$, $E_{\bm\alpha_{1,r},\bm\beta_{1,r},p}=\{e'_0,e'_1,\ldots,e'_m\}$ with $e'_0=0$ and $e'_{i}<e'_{i+1}$ for all $i\in\{0,\ldots, m\}$. Let $e_{k+1}$ and $e'_{m+1}$ be the prime number $p$. We put $r=e_{s-1}$ and $\tau(r)=e'_{h-1}$. If $p\equiv1\bmod d_{\bm\alpha,\bm\beta}$ then $$f(z)\equiv Q_1(z)f(z)^{p}+Q_2(z)f^{p^{2}}\bmod p,$$
where $$Q_1(z)=P_0+\frac{T_1^p}{P_1^{p-1}}\text{ and } Q_2(z)=P_1T_0^p-\frac{T_1^pP_0^p}{P_1^{p-1}},$$
with $$P_0(z)=\sum_{j=0}^{e_1-1}\mathcal{Q}_{\bm\alpha,\bm\beta}(j)z^j\text{, }\quad P_1(z)=\sum_{j=r_1}^{e_s-1}\mathcal{Q}_{\bm\alpha,\bm\beta}(j)z^j,$$
and  
$$T_0(z)=\sum_{j=0}^{e'_1-1}\mathcal{Q}_{\bm\alpha_{1,r},\bm\beta_{1,r}}(j)z^j\text{, }\quad T_1(z)=\sum_{j=\tau(r_1)}^{e'_h-1}\mathcal{Q}_{\bm\alpha_{1,r},\bm\beta_{1,r}}(j)z^jz^j.$$
\end{theo}

\begin{proof}
 It is clear that 1 is the order of $p$ in $(\mathbb{Z}/d_{\bm\alpha,\bm\beta}\mathbb{Z})^*$. It follows from Remark~\ref{rema_p_l}, that $(\bm\alpha,\bm\beta)$ satisfies de $\textbf{P}_{p,1}$ property because $p>2d_{\bm\alpha,\bm\beta}$. Further, $\bm\alpha$, $\bm\beta$ belong to $(\mathbb{Z}_{(p)}^*)^n$ because $p>2d_{\bm\alpha,\bm\beta}$ and $\bm\alpha$, $\bm\beta$ belong to $(0,1]^n$.  Note that $f$ is the hypergeometric series $f_{1,0}$ because Lemma~\ref{cyclic} implies $\mathfrak{D}_p(\bm\alpha)=\bm\alpha$ and $\mathfrak{D}_p(\bm\beta)=\bm\beta$. Then, by Lemma~\ref{lemm_explicit_poly}, we get
\begin{equation}\label{eq_8_1}
f(z)\equiv P_0(z)f(z)^p+P_1(z)f_{1,1}(z)^p\bmod p,
\end{equation} 
\begin{equation}\label{eq_8_2}
f_{1,1}(z)\equiv T_0(z)f(z)^p+T_1(z)f_{1,1}(z)^p\bmod p,
\end{equation}

where
$$f_{1,1}(z)=\sum_{m\geq0}\left(\frac{\underset{s\in\mathcal{C}_{\bm\alpha,r}}{\prod}\mathfrak{D}_p(\alpha_s)_m {\underset{s\in\mathcal{P}_{\bm\alpha,r}}{\prod}(\mathfrak{D}_p(\alpha_s)+1)_m}}{\underset{s\in\mathcal{C}_{\bm\beta,r}}{\prod}\mathfrak{D}_p(\beta_s)_m \underset{s\in\mathcal{P}_{\bm\beta,r}}{\prod}(\mathfrak{D}_p(\beta_s)+1)_m}\right)z^m\in1+z\mathbb{Z}_{(p)}[[z]].$$

From Equations \eqref{eq_8_1} and \eqref{eq_8_2}, we get
$$
f=\left(P_0+\frac{T_1^p}{P_1^{p-1}}\right)f^p+\left(P_1T_0^p-\frac{T_1^pP_0^p}{P_1^{p-1}}\right)f^{p^2}\bmod p.
$$
\end{proof}

As a corollary of Theorem~\ref{theo_cons} we have 

\begin{coro}\label{coro_explcit}

Let $\mathfrak{f}(z):={}_2F_1(\bm\alpha,\bm\beta;z)$ with $\bm\alpha=(\frac{1}{3},\frac{1}{2})$ and $\bm\beta=(\frac{5}{12},1)$, and let $p$ be a prime number such that $p=1+12k$ and $p>24$. Then, $\mathfrak{f}(z)\in\mathbb{Z}_{(p)}[[z]]$ and  $$\mathfrak{f}\equiv\left(P_0+\frac{T_1^p}{P_1^{p-1}}\right)\mathfrak{f}^p+\left(P_1T_0^p-\frac{T_1^pP_0^p}{P_1^{p-1}}\right)\mathfrak{f}^{p^2}\bmod p,$$
where $$P_{0}(z)=\sum_{j=0}^{5k}\frac{(1/3)_j(1/2)_j}{(5/12)_j(1)_j}z^j\text{ , } P_{1}=\sum_{j=1+5k}^{p-1}\frac{(1/3)_j(1/2)_j}{(5/12)_j(1)_j}z^j$$
and $$T_{0}(z)=\sum_{j=0}^{5k-1}\frac{(1/3+1)_j(1/2)_j}{(5/12+1)_j(1)_j}z^j\text{, }T_{1}(z)=\sum_{j=5k}^{p-1}\frac{(1/3+1)_j(1/2)_j}{(5/12+1)_j(1)_j}z^j.$$
\end{coro}

\begin{proof}
Note that $E_{\bm\alpha,\bm\beta,p}=\{0,1+5k\}$ and we have proved in Example \ref{exam_1} that $S_{\bm\alpha,\bm\beta,p}=\{0,1+5k\}$. From the calculations made in Example~\ref{exam_1} it follows that $\bm\alpha_{1,1+5k}=(1/3+1,1)$ and $\bm\beta_{1,1+5k}=(5/12+1,1)$. Thus $E_{\bm\alpha_{1,1+5k},\bm\beta_{1,1+5k}}=\{0,5k\}$ and $\tau(1+5k)=5k$, where $\tau:S_{\mathfrak{D}_p(\bm\alpha),\mathfrak{D}_p(\bm\beta),p}\rightarrow S_{\bm\alpha_{1,1+5k},\bm\beta_{1,1+5k},p}$ is the function given by Lemma~\ref{lemm_aux}. Since $\mathfrak{D}_p(\bm\alpha)=\bm\alpha$ and $\mathfrak{D}_p(\bm\beta)=\bm\beta$, $S_{\mathfrak{D}_p(\bm\alpha),\mathfrak{D}_p(\bm\beta),p}=\{0,1+5k\}$. Thus, from Theorem~\ref{theo_cons}, we get $$\mathfrak{f}\equiv\left(P_0+\frac{T_1^p}{P_1^{p-1}}\right)\mathfrak{f}^p+\left(P_1T_0^p-\frac{T_1^pP_0^p}{P_1^{p-1}}\right)\mathfrak{f}^{p^2}\bmod p.$$
\end{proof}
In the next theorem we give an explicit formula for each rational function appearing in Equation~\eqref{eq_ex_3}. 

\begin{theo}\label{theo_explicit}
Let  $\mathfrak{g}(z)={}_3F_2(\bm\alpha,\bm\beta;z)$ with $\bm\alpha=(\frac{1}{9},\frac{4}{9},\frac{5}{9})$ and $\bm\beta=(\frac{1}{3},1,1)$, and let $p$ be a prime number such that $p=8+9k_p$ and $p>18$. Then, $\mathfrak{g}(z)\in\mathbb{Z}_{(p)}[[z]]$ and $$\mathfrak{g}\equiv\left(P_{0,0}P_{1,0}^p+P_{0,0}P_{1,1}^p\left(\frac{R_{1,0}}{P_{0,0}}\right)^{p^2}\right)\mathfrak{g}^{p^2}\bmod p,$$
where $$P_{0,0}(z)=\sum_{s=0}^{5+6k_p}\frac{(1/9)_s(4/9)_s(5/9)_s}{(1/3)_s(1)_s^2}z^s\text{, } P_{1,0}=\sum_{s=0}^{2+3k_p}\frac{(8/9)_s(5/9)_s(4/9)_s}{(2/3)_s(1)_s^2}z^s$$ 
and $$P_{1,1}=\sum_{s=3+3k_p}^{p-1}\frac{(8/9)_s(5/9)_s(4/9)_s}{(2/3)_s(1)_s^2}z^s\text{, }R_{1,0}(z)=\sum_{s=0}^{4+6k_p}\frac{(1/9+1)_s(4/9)_s(5/9)_s}{(1/3+1)_s(1)_s^2}z^s.$$
\end{theo}

\begin{proof}
From Example~\ref{exam_2} we know that $E_{\mathfrak{D}_p(\bm\alpha),\mathfrak{D}_p(\bm\beta)}=S_{\mathfrak{D}_p(\bm\alpha),\mathfrak{D}_p(\bm\beta),p}=\{0,3+3k\}.$ Furthermore, it is clear that 2 is the order of $p$ in $(\mathbb{Z}/9\mathbb{Z})^*$ and, according to Remark~\ref{rema_p_l}, $(\bm\alpha,\bm\beta)$ satisfies the $\textbf{P}_{p,2}$ property because $p>18$. Hence, by Lemma~\ref{lemm_explicit_poly}, we get
\begin{equation}\label{eq_sec_9_1}
\mathfrak{g}(z)\equiv P_{0,0}P_{1,0}^p\mathfrak{g}^{p^2}+P_{0,0}P_{1,1}^p\mathfrak{g}_{1,1}^{p^2}\bmod p,
\end{equation}
\begin{equation}\label{eq_sec_9_2}
\mathfrak{g}_{1,1}(z)\equiv R_{1,0}P_{1,0}^p\mathfrak{g}^{p^2}+R_{1,0}P_{1,1}^p\mathfrak{g}_{1,1}^{p^2}\bmod p,
\end{equation}
where $\mathfrak{g}_{1,1}$ is the hypergeometric series ${}_3F_2(\bm\omega,\bm\eta;z)$ with $\bm\omega=((1/9)+1,4/9,5/9)$ and $\bm\eta=((1/3)+1,1,1)$. Multiplying Equation~\eqref{eq_sec_9_1} by $R_{1,0}$ and Equation~\eqref{eq_sec_9_2} by $P_{0,0}$ and subtracting the equations obtained we deduce that $$R_{1,0}\mathfrak{g}\equiv P_{0,0}\mathfrak{g}_{1,1}\bmod p.$$
So $\mathfrak{g}_{1,1}\equiv\frac{R_{1,0}}{P_{0,0}}\mathfrak{g}\bmod p$. By replacing this last equality into  \eqref{eq_sec_9_1} we obtain $$\mathfrak{g}\equiv\left(P_{0,0}P_{1,0}^p+P_{0,0}P_{1,1}^p\left(\frac{R_{1,0}}{P_{0,0}}\right)^{p^2}\right)\mathfrak{g}^{p^2}\bmod p.$$

\end{proof}

\nocite{*}

\end{document}